\newcommand*{\MRref}[2]{ \href{http://www.ams.org/mathscinet-getitem?mr=#1}{MR #1}}
\newcommand*{\arxiv}[1]{\href{http://www.arxiv.org/abs/#1}{arXiv: #1}}
\numberwithin{equation}{section}
\theoremstyle{plain}
\newtheorem{theorem}[equation]{Theorem}
\newtheorem{lemma}[equation]{Lemma}
\newtheorem{proposition}[equation]{Proposition}
\newtheorem{corollary}[equation]{Corollary}
\theoremstyle{definition}
\newtheorem{definition}[equation]{Definition}
\theoremstyle{remark}
\newtheorem{remark}[equation]{Remark}
\newtheorem{example}[equation]{Example}
\DeclareMathOperator{\spn}{span}
\DeclareMathOperator{\cspn}{\overline{span}}
\DeclareMathOperator{\aut}{Aut}
\DeclareMathOperator{\Dom}{dom}
\DeclareMathOperator{\Av}{Av}
\newcommand*{\cstar}{\texorpdfstring{$C^*$\nobreakdash-\hspace{0pt}}{C*-}}
\newcommand*{\Star}{\texorpdfstring{$^*$\nobreakdash-\hspace{0pt}}{*-}}
\newcommand*{\nb}{\nobreakdash}
\newcommand*{\defeq}{\mathrel{\vcentcolon=}}
\newcommand*{\<}{\langle}
\renewcommand{\>}{\rangle}
\newcommand*{\pc}{\vskip 1pc}
\newcommand*{\Real}{\mathbb{R}}
\newcommand*{\C}{\mathbb{C}}
\newcommand*{\Cl}{\mathcal{C}}
\newcommand*{\M}{\mathcal{M}}
\newcommand*{\MM}[1]{\mathcal{M}\bigl({#1}\bigr)}
\newcommand*{\tildeMM}[1]{\tilde{\mathcal{M}}\bigl({#1}\bigr)}
\newcommand*{\W}{\mathcal{W}}
\newcommand*{\E}{\mathcal{E}}
\newcommand*{\F}{\mathcal{F}}
\newcommand*{\G}{\mathcal{G}}
\newcommand*{\fell}{\mathcal{B}}
\newcommand*{\Ls}{\mathcal{L}}
\newcommand*{\K}{\mathcal{K}}
\newcommand*{\la}{\lambda}
\newcommand*{\f}{\varphi}
\newcommand*{\dualgcstar}[1]{$\dualg$\nb-\cstar{#1}}
\newcommand*{\cstaralgs}{\cstar{algebras}}
\newcommand*{\cstarfell}{C^*(\fell)}
\newcommand*{\cstarfellr}{C^*_\red(\fell)}
\newcommand*{\com}{\delta}
\newcommand*{\alg}{A}
\newcommand*{\algb}{B}
\newcommand*{\algc}{C}
\newcommand*{\hilm}{\mathcal{E}}
\newcommand*{\co}[1]{\delta_{#1}}
\newcommand*{\triv}{\mathrm{tr}}
\newcommand*{\repu}{\mathcal{U}}
\newcommand*{\fourier}[1]{A({#1})} 
\newcommand*{\fourst}[1]{B_\red({#1})} 
\newcommand*{\dtg}{\delta}
\newcommand*{\fmod}{\Delta} 
\newcommand*{\sbe}{\subseteq}
\newcommand*{\dd}{\,\mathrm{d}}
\newcommand*{\dualg}{\widehat{G}} 
\newcommand*{\crossp}[2]{C^*({#1},{#2})}
\newcommand*{\crosspr}[2]{C^*_\red({#1},{#2})}
\newcommand*{\braket}[2]{\langle#1\!\mid\!#2\rangle}
\newcommand*{\id}{{\mathrm{id}}}
\newcommand*{\su}{{\mathrm{su}}}
\newcommand*{\st}{{\mathrm{s}}}
\newcommand*{\si}{{\mathrm{si}}}
\newcommand*{\ii}{{\mathrm{i}}}
\newcommand*{\red}{\mathrm{r}} 
\newcommand*{\cont}{\mathcal{C}}
\newcommand*{\csg}{C_\red^*(G)}
\newcommand*{\csgf}{C^*(G)}
\newcommand*{\Mat}{\mathbb{M}} 
\begin{document}
\title[Integrability of dual coactions]{Integrability of dual coactions \\ on Fell bundle \cstar{algebras}}

\author{Alcides Buss}
\email{alcides@mtm.ufsc.br}
\address{Departamento de Matem\'atica\\
 Universidade Federal de Santa Catarina\\
 88.040-900 Florian\'opolis-SC\\
 Brazil}

\begin{abstract}
We study integrability for coactions of locally compact groups.
For abelian groups, this corresponds to integrability of the associated action of the Pontrjagin dual group.
The theory of integrable group actions has been previously studied by Ruy Exel, Ralf Meyer and Marc Rieffel.

Our goal is to study the close relationship between integrable group coactions and Fell bundles.
As a main result, we prove that dual coactions on \cstar{}algebras of Fell bundles are integrable,
generalizing results by Ruy Exel for abelian groups.
\end{abstract}
\subjclass[2000]{46L55 (46L08 81R50)}
\keywords{Fell bundles, integrable group coactions, dual coaction, noncommutative Fourier analysis,
Fourier inversion theorem, Plancherel weight}
\thanks{This work is based on the author's doctoral dissertation
    under the supervision of Ralf Meyer and Siegfried Echterhoff.}
\maketitle

\section{Introduction}

Fell bundles, also called \cstar{algebraic} bundles in \cite{Doran-Fell:Representations_2}, have attracted interest
of many researchers in operator algebras. They contain both theories of locally compact groups and \cstar{algebras}.
Moreover, \cstar{dynamical} systems can be viewed as special types of Fell bundles.

A Fell bundle over a locally compact group $G$ is a continuous Banach bundle $\fell=\{\fell_t\}_{t\in G}$
(see \cite{Doran-Fell:Representations_2}) endowed with a multiplication and an involution which is
compatible with the product and the inversion of $G$ in the sense that $\fell_t\cdot \fell_s\sbe \fell_{ts}$
and $\fell_t^*\sbe\fell_{t^{-1}}$ for all $t,s\in G$. There is a topology on $\fell$ which is in practical
situations most appropriately specified by a suitable space of continuous sections \cite[II.13.18]{Doran-Fell:Representations} and,
by definition, the algebraic operations on $\fell$ are required to be continuous with respect to this topology.
The axiom that connects Fell bundles to the theory of operator algebras
is the \cstar{identity} $\|b^*b\|=\|b\|^2$ which must hold for all $b\in \fell$.
In particular, $\fell_e$ is a \cstar{algebra} with respect to the restricted product and involution of $\fell$,
where $e$ is the identity element of $G$.

To a Fell bundle $\fell$ are associated two \cstar{algebras} in a natural way.
The \emph{full \cstar{algebra}} $\cstarfell$ which codifies the representation theory of $\fell$,
and the \emph{reduced \cstar{algebra}} $\cstarfellr$ which is the one generated by the regular representation of $\fell$.
Both \cstar{algebras} can be viewed are suitable completions of the \Star{algebra}
$\cont_c(\fell)$ of compactly supported continuous sections of $\fell$ endowed with the usual operations of convolution and involution.

If $G$ is discrete and abelian, there is a close relationship between Fell bundles over $G$
and actions of the Pontrjagin dual $\dualg$ on \cstar{algebras}, that is, \dualgcstar{algebras}.
Firstly, since $G$ is abelian, the full and the reduced \cstar{algebras} of $\fell$ are
isomorphic (the regular representation is faithful) and, what is more important, there is a natural action $\beta$
of $\dualg$ on $\cstarfell\cong\cstarfellr$, called the \emph{dual action}, which is determined by
\begin{equation}\label{eq:DefDualAction}
\beta_x(\xi)(t)\defeq \overline{\braket{x}{t}}\xi(t)\quad\mbox{for }\xi\in \cont_c(\fell), x\in \dualg, t\in G.
\end{equation}
Here we write $\braket{x}{t}\defeq x(t)$ to emphasize the duality between $G$ and $\dualg$.

Every action of $\dualg$ may be viewed as a dual action in a natural way if  $G$ is discrete, that is, if $\dualg$ is compact.
Indeed, given a \dualgcstar{algebra} $\alg$ with an action $\alpha$ of $\dualg$, we can define the \emph{spectral subspaces}:
\begin{equation}\label{eq:SpectralSubspaces}
\fell_t\defeq \left\{a\in \alg\colon\alpha_x(a)=\overline{\braket{x}{t}}a\mbox{ for all } x\in \dualg\right\}\quad\mbox{for }t\in G.
\end{equation}
Since $\dualg$ acts by \Star{}automorphisms, we have $\fell_t\cdot\fell_s\sbe\fell_{ts}$ and $\fell_t^*\sbe\fell_{t^{-1}}$ for all $t,s\in G$.
This implies that the collection $\fell=\{\fell_t\}_{t\in G}$ forms a Fell bundle with respect to the algebraic operations inherited from $\alg$.
The continuity of the bundle is irrelevant here because $G$ discrete. Note that the spectral subspaces are linear independent and
the (algebraic) direct sum $\bigoplus_{t\in G}\fell_t\sbe \alg$ can be canonically identified with the subspace $\cont_c(\fell)\sbe \cstarfell$.
Moreover, this identification induces an isomorphism $\alg\cong\cstarfell$ which is compatible with the actions of $\dualg$,
that is, it is an isomorphism of \dualgcstar{algebras}. Here we are implicitly using that $G$ is discrete to ensure that the
direct sum $\bigoplus_{t\in G}\fell_t$ be dense in $\alg$. In general, if $G$ is not discrete, the
spectral subspaces \eqref{eq:SpectralSubspaces} may be very trivial, for instance they may be all
zero even if $A$ is non-zero. This happens for example if $A$ carries a dual action.

As shown by Ruy Exel in \cite{Exel:Unconditional}, a necessary condition for a
\dualgcstar{algebra} to be isomorphic to a dual action is the integrability of the underlying action.
The notion of integrability of group actions was studied by Marc Rieffel in \cite{Rieffel:Integrable_proper} and is equivalent
to the one developed by Exel in \cite{Exel:Unconditional}. A positive element $a$
of a \dualgcstar{algebra} $\alg$ is called \emph{integrable} if the strict unconditional integral
$\int_{\dualg}^\su\alpha_x(a)\dd{x}$ exists, that is, if it converges unconditionally for the strict topology in $\M(\alg)$,
the multiplier algebra of $\alg$. Here we are tacitly using the Haar measure $\dd{x}$ of $\dualg$.
Linear combinations of integrable elements are also called integrable.
A \dualgcstar{algebra} $\alg$ is said to be \emph{integrable} if the set of integrable elements is dense in $\alg$.

Given an integral element $a$ in a \dualgcstar{algebra} $A$, we can define \emph{spectral elements}
\begin{equation}\label{eq:SpectralElements}
E_t(a)\defeq \int_{\dualg}^\su\braket{x}{t}\alpha_x(a)\dd{x}\quad\mbox{for }t\in G.
\end{equation}
Note that $E_t(a)$ is a kind of generalized Fourier coefficient of the element $a$. It belongs to the spectral subspace
\begin{equation}\label{eq:SpectralSubspacesMultiplier}
\M_t(A)\defeq \{a\in \M(A)\colon\alpha_x(a)=\overline{\braket{x}{t}}a\mbox{ for all }x\in \dualg\}.
\end{equation}
In the case of the dual action on the \cstar{algebra} $\cstarfell$ of a Fell bundle $\fell=\{\fell_t\}_{t\in G}$,
it was proved by Exel in \cite{Exel:Unconditional} that every element $\xi$ in
$\cont_c(\fell)^2\defeq \spn\{\eta*\zeta\colon\eta,\zeta\in \cont_c(\fell)\}$
is integrable (with respect to the dual action) and
\begin{equation}\label{eq:SpectralElementsDualAction}
E_t(\xi)=\xi(t)\quad\mbox{for all }t\in G,
\end{equation}
where we identify each $\xi(t)\in \fell_t$ as the multiplier in $\MM{\cstarfell}$ given by $\xi(t)\cdot\eta|_s\defeq \xi(t)\cdot\eta(t^{-1}s)$
whenever $\eta\in \cont_c(\fell)$ and $s\in G$.

What happens with these results if $G$ is non-abelian? The main goal of this work is to answer this question.
We are going to generalize Exel's results cited above to arbitrary locally compact groups.

If $G$ is not abelian, there is not a dual action on the \cstar{algebra} $\cstarfell$, but there is instead
a \emph{dual coaction} of $G$, that is, there is a nondegenerate \Star{}homomorphism
$$\co{\fell}\colon\cstarfell\to\MM{\cstarfell\otimes\csg}$$
which is compatible with the \emph{comultiplication} of $\csg$, the reduced \cstar{algebra} of $G$.
Here we implicitly consider $\csg$ as a \emph{locally compact quantum group}
in the sense of Johan Kustermans and Stefaan Vaes \cite{Kustermans-Vaes:LCQG}.
The comultiplication is the nondegenerate \Star{}homomorphism $\com\colon\csg\to\MM{\csg\otimes\csg}$
determined by the equation $\com(\la_t)=\la_t\otimes\la_t$
for $t\in G$, where $\la\colon G\to\Ls(L^2(G))$ denotes the left regular representation of $G$. The dual coaction is characterized by
$\co{\fell}(b_t)=b_t\otimes\la_t$ whenever $b_t\in \fell_t$. In the same way, there is a canonical $G$-coaction $\co{\fell}^\red$
on $\cstarfellr$, also called dual coaction, which makes the regular representation $\la_\fell\colon\cstarfell\to\cstarfellr$ equivariant,
that is, it is determined by the identity $\co{\fell}^\red(\la_\fell(b_t))=\la_\fell(b_t)\otimes\la_t$ for all $b_t\in \fell_t$.
In the case of abelian groups, coactions of $G$ correspond to actions of its dual $\dualg$
and the dual coaction corresponds to the dual action given by Equation~\eqref{eq:DefDualAction}.

Having defined the main objects, we can now enunciate our main task: we are going to prove that the dual coaction
on a \cstar{algebra} of a Fell bundle is integrable. The notion of integrable coactions of locally compact quantum groups
was defined and studied recently by the author and by Ralf Meyer in \cite{Buss-Meyer:Square-integrable}.
The main ingredient here is the Haar weight of the quantum group -- the noncommutative analogue of the Haar measure on a group.
The Haar weight of the quantum group $(\csg,\com)$ is the Plancherel weight \cite{Pedersen:CstarAlgebras,Takesaki:Theory_2}.
The weight theory generalizes the concept of measure to noncommutative operator algebras and the Plancherel weight
corresponds to the Haar measure on $\dualg$ if $G$ is abelian. Moreover, in this case integrable coactions of $G$
correspond to integrable actions of $\dualg$.

Although the theory of integrable coactions is sometimes more technical and less transparent than the theory of integrable actions
due to the use of weights in place of measures, many results have simpler proofs than the case of actions.
Our main result, that is, the fact that dual coactions are integrable, is an example of this phenomena.
Its proof not only generalizes, but also simplifies Exel's proof for abelian groups.

In the abelian case, the idea behind the proof is to view the dual action applied to an element $\xi\in \cont_c(\fell)$
as a kind of generalized Fourier transform of $\xi$. In this way, to prove that $\xi$
is integrable with respect to the dual action means the same as to prove that its Fourier transform is
strictly-unconditionally integrable. This is true if the function $\xi$ is positive-definite, and this holds provided
$\xi$ is positive as an element of $\cstarfell$. This proves that any element $\xi\in \cont_c(\fell)^2$ is integrable with
respect to the dual action. Moreover, Equation~\eqref{eq:SpectralElementsDualAction} is just
a manifestation of a (generalized) Fourier inversion Theorem.

We follow the same idea to proof that dual coactions are integrable in the case of non-abelian groups.
Basically, what we need is a generalized Fourier analysis for such groups.
In particular, we need a generalized Fourier inversion Theorem and this has been recently developed by the author in \cite{Buss:Fourier}.

\section{Preliminaries}

\subsection{Weight theory}\label{sec:weights}

In this section we recall some basic concepts of weight theory on \cstar{algebras}, mainly to fix our notation.
For more details, we refer the reader to~\cite{Kustermans-Vaes:Weight}.

In operator algebras, the noncommutative version of measure theory is the theory of weights.
Let us say that $X$ is a locally compact (Hausdorff) topological space and suppose that $\mu$ is a Radon measure on $X$.
Then we can associated to $\mu$ a function $\f_\mu\colon\cont_0(X)^+\to[0,\infty]$ defined by $\f_\mu(f)\defeq \int_X f(x)\dd{\mu(x)}$,
where $\cont_0(X)$ is the \cstar{algebra} of continuous functions on $X$ that vanish at infinity and $\cont_0(X)^+$
is the subset of positive functions in $\cont_0(X)$. Note that $\f_\mu$ is \emph{additive} and \emph{homogeneous}, that is,
$\f_\mu(\alpha f+g)=\alpha\f_\mu(f)+\f_\mu(g)$ for all $\alpha\in \Real^+$ and $f,g\in \cont_0(X)^+$.
The theory of weights recovers exactly this idea:

\begin{definition}
A \emph{weight} on a \cstar{algebra} $\algc$ is a function $\f\colon\algc^+\to [0,\infty]$
which is additive and homogeneous, where $\algc^+$ denotes the set of positive elements of $\algc$.
\end{definition}

Given a weight $\f$ on a \cstar{algebra} $\algc$, we define the following subsets of $\algc$:
the set of \emph{positive integrable} elements $\algc_\ii^+\defeq \{x\in \algc^+\colon\f(x)<\infty\}$,
the set of \emph{integrable} elements $\algc_\ii$ as the (complex) linear span of $\algc_\ii^+$,
and the set of \emph{square-integrable} elements $\algc_\si\defeq \{x\in \algc: x^*x\in \algc_\ii^+\}$.
Note that $\algc_\ii^+$ is a \emph{positive cone}, meaning that $\alpha x+ y\in \algc_\ii^+$
whenever $\alpha\in \Real^+$ and $x,y\in \algc_\ii^+$; also note that $\algc_\ii^+$ is \emph{hereditary}
in the sense that if $x,y\in \algc^+$ with $x\leq y$ and if $y\in \algc_\ii^+$, then $x\in \algc_\ii^+$.
This properties imply (see \cite[Proposition 2.6]{Exel:Spectral}) that $\algc_\ii$ is a
\Star{}subalgebra of $\algc$, that $\algc_\si$ is a left ideal of $\algc$, that $\algc_\ii$ is spanned
by $\algc_\si^*\algc_\si\defeq \{x^*y:x,y\in \algc_\si\}$, and that the positive part
of $\algc_\ii$ coincides with $\algc_\ii^+$. Moreover, the weight $\f$ extends linearly to $\algc_\ii$ and its extension
is also denoted by $\f$.

For the weight $\f_\mu$ on $\cont_0(X)$ defined above, $\cont_0(X)_\ii$
is the subalgebra of $\mu$\nb-integrable functions in $\cont_0(X)$, and
$\cont_0(X)_\si$ is the ideal of $\mu$\nb-square-integrable functions in $\cont_0(X)$.

To avoid pathological cases and allow good properties, we impose some natural conditions on the weights.
Note that for the weight $\f_\mu$, the subalgebra $\cont_0(X)_\ii$ is dense because
it contains the subset $\cont_c(X)$ of compactly supported functions.
In general, we say that a weight $\f$ on a \cstar{algebra} $\algc$ is
\emph{densely defined} if $\algc_\ii$ is dense in $\algc$. It is equivalent to require
that $\algc_\ii^+$ is dense in $\algc^+$, or that $\algc_\si$ is dense in $\algc$.
We are mainly interested in weights that are densely defined and \emph{lower semi-continuous}, that is,
\[
\f(x)\leq \lim\limits_{i\in I}\inf \f(x_i)
\]
whenever $(x_i)_{i\in I}$ is a net in $\algc^+$ converging to $x\in \algc^+$.
For the weight $\f_\mu$ defined from a Radon measure $\mu$, lower semi-continuity follows from Fatou's Lemma.

An important property about lower semi-continuous weights is that they can be approximated by \emph{bounded} linear functionals in a
natural way. More precisely, we have (see \cite{Combes:Poids})
\begin{equation}\label{eq:ApproxLowerSemiContinuousWeights}
\f(x)=\sup\{\omega(x)\colon\omega\in \F_\f\},
\end{equation}
where $\F_\f\defeq \{\omega\in \algc^*_+\colon\omega(x)\leq\f(x) \mbox{ for all }x\in \algc^+\}$.
Here $\algc^*_+$ denotes the set of all (bounded) positive linear functionals on $\algc$. Moreover, defining
$\G_\f\defeq \{\alpha\cdot\omega\colon\alpha\in(0,1), \omega\in \F_\f\}$, it is possible to prove that $\G_\f$ is a directed set with
respect to the natural order of $\algc^*_+$, that is,
for all $\omega_1,\omega_2\in \G_\f$, there is  $\omega\in \G_\f$ such that $\omega_1,\omega_2\leq\omega$
(see \cite{Kustermans:KMS} for the proof of this fact). This allows us to use $\G_\f$ as the index set of a net.
It follows from Equation~\eqref{eq:ApproxLowerSemiContinuousWeights} that
\begin{equation}\label{eq:NetApproxLowerSemiContinuousWeights}
\f(x)=\lim\limits_{\omega\in \G_\f}\omega(x)\quad\mbox{for all }x\in \M(C)_\ii.
\end{equation}
It is always possible to extend a lower semi-continuous weight $\f$ to the multiplier algebra
$\M(\algc)$ of $\algc$ in a natural way. In fact, we can define
$$\bar\f(x)\defeq \sup\{\bar\omega(x)\colon\omega\in \F_\f\}\quad \mbox{for all }x\in \M(\algc)^+,$$
where $\bar\omega$ denotes the strictly continuous extension of the functional $\omega\in \algc^*$.
Note that $\bar\f$ is the unique strictly lower semi-continuous weight on $\M(\algc)$ extending $\f$.
Abusing the notation, we shall also write $\f$ for the extension $\bar\f$ and use the notations $\M(\algc)_\ii$ and $\M(\algc)_\si$
for the subsets of $\M(\alg)$ of integrable and square-integrable elements with respect to $\bar\f$.
We do the same for functionals $\omega\in \algc^*$, that is, we also write $\omega$ for its unique strictly continuous extension
$\bar\omega$ on $\M(\algc)$.

\subsection{Slicing with weights}\label{sec:WeightSlices}

Let $\f$ be a lower semi-continuous weight on a \cstar{algebra} $\algc$.
Given a \cstar{algebra} $\alg$, it is possible to extend $\f$ naturally to a "generalized weight" $\id_\alg\otimes \f$
on $\alg\otimes \algc$ taking values in $\alg$. Here and throughout the rest of this work, the symbol $\otimes$ denotes
the \emph{minimal} tensor products between \cstar{algebras}. It is also possible to extend
$\id_\alg\otimes \f$ to multiplier algebras. Firstly, we define the domain of $\id_\alg\otimes\f$
as the set $\M(\alg\otimes \algc)_\ii^+$ of all $x$ in $\M(\alg\otimes \algc)^+$ for which the net
$\bigl((\id_\alg\otimes\omega)(x)\bigr)_{\omega\in \G_\f}$ converges in $\M^\st(\alg)$.
Here $\M^\st(\alg)$ denotes the multiplier algebra $\M(\alg)$ endowed with the strict topology, that is, the
locally convex topology generated by the semi-norms $a\mapsto \|ab\|$ and $a\mapsto \|ba\|$ with $b\in A$.
For each bounded functional $\omega\in \algc^*$, the \emph{slice map} $\id_\alg\otimes\omega\colon\alg\otimes \algc\to \algc$
is defined as the unique bounded linear map satisfying $(\id_\alg\otimes\omega)(a\otimes c)=a\omega(c)$
for all $a\in \alg$ and $c\in \algc$. Moreover, this map admits a unique strictly continuous extension
$\M(\alg\otimes \algc)\to\M(\alg)$ which is also denoted by $\id_\alg\otimes\omega$. This is the map we used above.
The slice map $\id_\alg\otimes\f\colon\M(\alg\otimes \algc)_\ii^+\to\M(\alg)$ is defined in the natural way:
\[
(\id_\alg\otimes\f)(x)\defeq \mbox{s-}\!\lim\limits_{\omega\in\G_\f}(\id_\alg\otimes\omega)(x)\quad\mbox{for all }
x\in \M(\alg\otimes \algc)_\ii^+.
\]
The script "s" serves to remember that we take the limit in the strict topology of $\M(\alg)$.
Note that if $\alg=\C$, then $\M(\alg\otimes \algc)_\ii^+=\M(\algc)_\ii^+$. As in this special case,
one has that $\M(\alg\otimes \algc)_\ii^+$ is a hereditary positive cone in $\M(\alg\otimes \algc)^+$.
The linear span of $\M(\alg\otimes \algc)_\ii^+$, denoted by $\M(\alg\otimes \algc)_\ii$,
is a \Star{}subalgebra of $\M(\alg\otimes \algc)$ whose positive part coincides with $\M(\alg\otimes \algc)_\ii^+$.
As for $\M(\algc)_\ii$, elements in $\M(\alg\otimes \algc)_\ii$
also called  \emph{integrable} (with respect to the weight $\f$).
Moreover, the set of \emph{square-integrable} elements
$\M(\alg\otimes \algc)_\si\defeq \{x\in \M(\alg\otimes \algc):x^*x\in \M(\alg\otimes \algc)_\ii\}$
is a left ideal in $\M(\alg\otimes \algc)$ and
\[
\M(\alg\otimes \algc)_\ii=\spn\M(\alg\otimes \algc)_\si^*\M(\alg\otimes \algc)_\si.
\]
The map $\id_\alg\otimes\f$ has a linear extension to $\M(\alg\otimes \algc)_\ii$, also denoted by $\id_\alg\otimes\f$.
More details about the facts mentioned above can be found in \cite{Kustermans-Vaes:Weight}.
In particular, the following result (Propositions 3.9 and 3.14 in \cite{Kustermans-Vaes:Weight}) characterizes integrable elements.

\begin{proposition}\label{prop:chacterization of integrable elements}
A positive element $x\in \M(\alg\otimes \algc)^+$ is integrable \textup(with respect to the weight $\f$ on $\algc$\textup)
if and only if there is $a\in \M(\alg)$ such that, for all positive linear functionals $\theta\in \alg^*_+$, the element
$(\theta\otimes\id_\algc)(x)$ is integrable in $\M(\algc)$ \textup(that is, it belongs to $\M(\algc)_\ii$\textup) and one has
$\f\bigl((\theta\otimes\id_\algc)(x)\bigr)=\theta(a)$.
\end{proposition}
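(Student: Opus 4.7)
My overall strategy rests on the Fubini-type identity
$$
(\theta \otimes \omega)(x) \;=\; \theta\bigl((\id_\alg \otimes \omega)(x)\bigr) \;=\; \omega\bigl((\theta \otimes \id_\algc)(x)\bigr), \qquad \theta \in \alg^*_+,\ \omega \in \algc^*_+,
$$
valid for $x \in \M(\alg \otimes \algc)^+$ (using the strictly continuous extensions of the slice maps and of the positive functionals), combined with the approximation formula \eqref{eq:NetApproxLowerSemiContinuousWeights} that recovers $\f$ as the directed supremum of the functionals in $\G_\f$. Both directions reduce to interchanging the order of two limits/suprema.

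\textbf{Forward direction.} Suppose $x$ is integrable and set $a \defeq (\id_\alg \otimes \f)(x) \in \M(\alg)$. Applying a strictly continuous positive functional $\theta \in \alg^*_+$ to the strict convergence $(\id_\alg \otimes \omega)(x) \to a$ gives $\theta\bigl((\id_\alg \otimes \omega)(x)\bigr) \to \theta(a)$. By the Fubini identity this reads $\omega\bigl((\theta \otimes \id_\algc)(x)\bigr) \to \theta(a)$, and since the net is increasing along $\G_\f$, its limit coincides with its supremum, which by \eqref{eq:ApproxLowerSemiContinuousWeights} equals $\f\bigl((\theta \otimes \id_\algc)(x)\bigr)$. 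Hence $(\theta \otimes \id_\algc)(x) \in \M(\algc)_\ii^+$ and $\f\bigl((\theta \otimes \id_\algc)(x)\bigr) = \theta(a)$.

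\textbf{Reverse direction.} Conversely, assume the slicing hypothesis holds with multiplier $a$ and set $y_\omega \defeq (\id_\alg \otimes \omega)(x) \in \M(\alg)^+$. Running the Fubini calculation backwards yields $\theta(y_\omega) = \omega\bigl((\theta \otimes \id_\algc)(x)\bigr) \nearrow \f\bigl((\theta \otimes \id_\algc)(x)\bigr) = \theta(a)$ for every $\theta \in \alg^*_+$, so $(y_\omega)$ is an increasing net bounded above by $a$ in $\M(\alg)^+$.

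\textbf{The main obstacle} is to promote this weak-type convergence against positive functionals to strict convergence in $\M(\alg)$, since for bounded monotone nets of multipliers weak testing alone is not in general enough. My proposal is to exploit the positivity by cutting down with elements of $\alg$. For each $b \in \alg$, the positive element $\tilde x \defeq (b^* \otimes 1)\,x\,(b \otimes 1) \in \M(\alg \otimes \algc)^+$ satisfies the same slicing hypothesis with multiplier $b^* a b \in \alg$ (which, crucially, lies in $\alg$ itself and not merely in $\M(\alg)$), and the cut-down net $b^* y_\omega b = (\id_\alg \otimes \omega)(\tilde x)$ takes values in $\alg^+$. Because $(b^* y_\omega b)$ is increasing in $\alg^+$, bounded above by $b^* a b \in \alg^+$, and converges to $b^* a b$ against every state of $\alg$, a monotone-convergence argument in the enveloping von Neumann algebra $\alg^{**}$ (using that the supremum already lives in $\alg$) forces norm convergence $b^* y_\omega b \to b^* a b$ in $\alg$. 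Finally, the estimate $\|(a - y_\omega) b\|^2 = \|b^*(a - y_\omega)^2 b\| \leq \|a - y_\omega\| \cdot \|b^*(a - y_\omega) b\|$, together with the symmetric estimate for left multiplication, converts this into strict convergence $y_\omega \to a$ in $\M^\st(\alg)$. This shows $x \in \M(\alg \otimes \algc)_\ii^+$ with $(\id_\alg \otimes \f)(x) = a$, finishing the proof.
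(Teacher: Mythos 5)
Your argument is correct, but note first that the paper does not actually prove this proposition: it is imported from Kustermans--Vaes (Propositions 3.9 and 3.14 of \cite{Kustermans-Vaes:Weight}), so you are supplying a proof where the text supplies only a citation. What you have written is essentially a reconstruction of the standard argument from that source: the Fubini identity for slice maps together with the approximation of $\f$ by the directed family $\G_\f$ disposes of the forward direction, and the reverse direction hinges on upgrading monotone convergence of $y_\omega=(\id_\alg\otimes\omega)(x)$ against positive functionals to strict convergence by compressing with elements of $\alg$. Two points deserve attention. In the forward direction, the strict extension of $\theta$ is only guaranteed to be strictly continuous on \emph{bounded} subsets of $\M(\alg)$, so you should record that the net $(y_\omega)$ is bounded; this follows because it is increasing and its strict limit $a$ is an upper bound for it. In the reverse direction, the sentence ``a monotone-convergence argument in $\alg^{**}$ forces norm convergence'' is the crux of the whole proof and is asserted rather than proved; the clean justification is Dini's theorem on the weak-$*$ compact quasi-state space $Q(\alg)\defeq\{\theta\in\alg^*_+:\|\theta\|\leq 1\}$: the functions $\theta\mapsto\theta(b^*ab-b^*y_\omega b)$ are weak-$*$ continuous on $Q(\alg)$, decrease pointwise to $0$, hence converge to $0$ uniformly on $Q(\alg)$ by compactness, and for a positive element of $\alg$ the supremum over $Q(\alg)$ is exactly the norm. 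With that inserted, your estimate $\|(a-y_\omega)b\|^2\leq\|a-y_\omega\|\cdot\|b^*(a-y_\omega)b\|$ (valid since $a-y_\omega\geq 0$ and uniformly bounded) correctly converts the compressed norm convergence into strict convergence, and the proof is complete.
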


It is interesting to describe what happens in the commutative case. Assume that $\algc=\cont_0(X)$
for some locally compact space $X$ and that $\f=\f_\mu$ is the weight associated to a Radon measure $\mu$ on $X$
(see Section~\ref{sec:weights}). In this case, $\M(\alg\otimes \algc)$ can be canonically identified with the \cstar{algebra}
$\cont_b\bigl(X,\M^\st(\alg)\bigr)$ of all bounded strictly continuous functions $f:X\to\M(\alg)$.\footnote{The script $\st$ in
$\cont_b\bigl(X,\M^\st(\alg)\bigr)$ is used to remember the use of strict topology in $\M(\alg)$.}
The concept of integrability defined above for elements in $\M(\alg\otimes \algc)$
can be translated in more classical notions of integrability (see \cite[Proposition 2.2]{Buss:Fourier}).
To be more precise, a function $f:X\to \M(\alg)$
which corresponds to a positive element in $\M(\alg\otimes \algc)$ is integrable
in the above sense if and only if $f$ is \emph{strictly-unconditionally integrable}, that is,
if the net of strict Bochner integrals $\bigl(\int_K^\st f(x)\dd{\mu(x)}\bigr)_{K\in \Cl}$ converges in the strict topology of $\M(\alg)$,
where $\Cl$ denotes the set of compact subsets of $X$ directed by inclusion.
Note that because $f:X\to \M(\alg)$ is strictly continuous, its strict Bochner integral exists over compact subsets
$K\sbe X$, that is, the functions $x\mapsto f(x)a$ and $x\mapsto af(x)$ are Bochner integrable over $K$ for all $a\in \alg$.

Unconditional integrability has been studied by Ruy Exel in \cite{Exel:Unconditional,Exel:Spectral}.
As observed by him in \cite{Exel:Spectral}, this concept of integrability is equivalent
to the notion of integrability in the sense of Pettis \cite{Pettis:Integration}.
In fact, this result holds not only for continuous functions over topological spaces with values in operator algebras,
but also in the more general context of measurable functions over measure spaces with values in arbitrary Banach spaces.
This is the main result of the dissertation of Patricia Hess \cite[Theorem 4.14]{Hess:Integracao}.
As we have shown in \cite[Proposition 2.2]{Buss:Fourier}, in the special case of operator algebras the proof can be
simplified significantly.

\subsection{The Plancherel weight}\label{533}
In this section, we recall some basic facts about the Plancherel weight on the reduced
\cstar{algebra} $C_\red^*(G)$ of a locally compact group $G$.
This is a fundamental ingredient for our future constructions.

We start by recalling the definition of the Plancherel weight $\tilde{\f}$ on the von Neumann
algebra $\Ls(G)\defeq C_\red^*(G)''\sbe\Ls(L^2(G))$ $\bigl($that is, the bicommutant of
$C^*_\red(G)$ in $\Ls(L^2(G))\bigr)$. For more details we indicate \cite[Section~7.2]{Pedersen:CstarAlgebras} or
\cite[Section~VII.3]{Takesaki:Theory_2}.

A function $\xi\in L^2(G)$ is called \emph{left bounded} if the map
$\cont_c(G)\ni f\mapsto \xi*f\in L^2(G)$ extends to a bounded operator on $L^2(G)$ and,
in this case, we denote its extension by $\la(\xi)$. Here $(\xi*f)(t)=\int_G\xi(s)f(s^{-1}t)\dd{s}$
denotes the convolution of functions. Note that $\la(\xi)$ belongs to $\Ls(G)$ for every left bounded function $\xi$.
The Plancherel weight $\tilde\f\colon\Ls(G)^+\to [0,\infty]$ is defined by the formula
$$
\tilde\f(x)\defeq \left\{
\begin{array}{cc}
\|\xi\|^2=\braket{\xi}{\xi} & \mbox{if }x^{\frac{1}{2}}=\lambda(\xi) \mbox{ for some left bounded function }\xi\in L^2(G),\\
\infty & \mbox{otherwise}.\qquad\qquad\qquad\qquad\qquad\qquad\qquad\qquad\qquad\quad\,\,\,\,
\end{array}\right.
$$
From the above definition it follows that
$$\Ls(G)_\si=\bigl\{\la(\xi)\colon\xi\in L^2(G)\mbox{ is left bounded}\bigr\}$$
and (by polarization) $\tilde\f\bigl(\la(\xi)^*\la(\eta)\bigr)=\braket{\xi}{\eta}$ whenever $\xi,\eta\in L^2(G)$ are left bounded.
Here $\<\xi|\eta\>=\int_G\overline{\xi(t)}\eta(t)\dd{t}$ denotes the inner product in $L^2(G)$ (which is assumed
to be linear on the second variable).

The Plancherel weight $\f$ on $C_\red^*(G)$ is, by definition, the restriction of $\tilde\f$ to $C_\red^*(G)^+$. Thus
$$\csg_\si=\bigl\{\la(\xi)\colon\xi\in L^2(G)\mbox{ is left bounded and }\la(\xi)\in \csg\bigr\}.$$
We also get
\begin{equation}\label{520}
\MM{\csg}_\si=\bigl\{\la(\xi)\colon\xi\in L^2(G)\mbox{ is left bounded and }\la(\xi)\in \MM{\csg}\bigr\}.
\end{equation}
Defining the usual involution $\xi^*(t)\defeq \Delta(t)^{-1}\overline{\xi(t^{-1})}$, where $\Delta$ is the modular function of $G$,
it is easy to see that
\begin{equation}\label{371}
(\xi^**\eta)(t)=\<\xi|V_t\eta\>\quad\mbox{for all }\xi,\eta\in L^2(G)\mbox{ and } t\in G,
\end{equation}
where $V_t(\eta)(s)\defeq \eta(st)$. In particular, the function $\xi^**\eta$ is continuous and we have
$(\xi^**\eta)(e)=\<\xi|\eta\>$, where $e$
denotes the identity element of $G$. Thus, if $\xi,\eta\in L^2(G)$ are left bounded functions, the operator
$\la(\xi^**\eta)=\la(\xi)^*\la(\eta)$ belongs to $\Ls(G)_\ii$ and
$$\tilde{\f}\bigl(\la(\xi^**\eta)\bigr)=\braket{\xi}{\eta}=(\xi^**\eta)(e).$$
Therefore $$\Ls(G)_\ii=\la\bigl(\cont_e(G)\bigr),$$
where $\cont_e(G)\defeq \spn\{\xi^**\eta\colon\xi,\eta\in L^2(G)\mbox{ left bounded}\}\sbe \cont(G)$,
and $\tilde\f$ is simply the functional that evaluates functions of $\cont_e(G)$ at $e\in G$.
Since $\f$ is the restriction of $\tilde\f$ to $\csg$, we have $\MM{\csg}_\ii\sbe\Ls(G)_\ii$ and the same formula holds for $\f$.
Note that Equation~\eqref{371} yields $\fmod(t)^{\frac{1}{2}}(\xi^**\eta)(t)=\braket{\xi}{\rho_t\eta}$, where
$\rho_t=\fmod(t)^{\frac{1}{2}}V_t$ is the right regular representation of $G$. It follows that
$\fmod^{\frac{1}{2}}\cdot(\xi^**\eta)\in \fourier{G}$, the Fourier algebra of the group $G$ (see \cite{Eymard:Fourier}).
In particular, $\fmod^{\frac{1}{2}}\cdot\cont_e(G)\sbe \fourier{G}.$
This inclusion has dense image in $\fourier{G}$ because $\cont_e(G)$ contains all the functions in
$\cont_c(G)^2=\spn \bigl(\cont_c(G)*\cont_c(G)\bigr)$.

Although the formula $\tilde\f\bigl(\la(f)\bigr)=f(e)$ makes sense for every function
$f$ in $\cont_c(G)$, it is not true in general that $\la\bigl(\cont_c(G)\bigr)\sbe \Ls(G)_\ii$,
that is, it is not true that $\cont_c(G)\sbe \cont_e(G)$ (of course, we always have
$\cont_c(G)*\cont_c(G)\sbe \cont_e(G)$). See Remark~4.5 in \cite{Buss:Fourier}.
However,  one can prove the following partial result (see Proposition 4.4 in \cite{Buss:Fourier}):

\begin{proposition}\label{349} Let $G$ be a locally compact group and let $f\in \cont_c(G)$.
If $\la(f)\geq 0$ as an operator on $L^2(G)$, then there is a left bounded function $\xi\in L^2(G)$
such that $\la(f)^{\frac{1}{2}}=\la(\xi)$ and $f=\xi^**\xi$. In particular,
$\la(f)\in\csg_\ii^+$ and $$\f\bigl(\la(f)\bigr)=\|\xi\|_2^2=(\xi^**\xi)(e)=f(e).$$
\end{proposition}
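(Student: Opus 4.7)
The plan is to produce $\xi$ explicitly as a limit in $L^2(G)$. First set $T \defeq \la(f)^{1/2}$; since $\la(f) \geq 0$ in the \cstar{algebra} $\csg$, continuous functional calculus places $T$ in $\csg \sbe \Ls(G)$. The task reduces to exhibiting a left bounded $\xi \in L^2(G)$ with $\la(\xi) = T$: given such a $\xi$, we immediately obtain $\la(f) = T^*T = \la(\xi)^*\la(\xi) \in \csg_\ii^+$ with $\f(\la(f)) = \|\xi\|_2^2$ by \eqref{371}, and the remaining identity $f = \xi^**\xi$ becomes a pointwise equality between two continuous functions on $G$.

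Choose an approximate identity $(h_n)$ in $\cont_c(G)^+$ with $\|h_n\|_1 = 1$ and supports shrinking to $\{e\}$, and set $\xi_n \defeq Th_n \in L^2(G)$. The central step is to show that $(\xi_n)$ is Cauchy: expanding
\[
\|\xi_n - \xi_m\|_2^2 = \<h_n - h_m|\la(f)(h_n - h_m)\>
\]
produces four terms of the form $\<h_i|\la(f)h_j\> = \int f(u)\,\<L_{u^{-1}}h_i|h_j\>\,\dd{u}$ after a change of variables inside the convolution (here $L$ denotes left translation). Because $h_i$ and $h_j$ are concentrated near $e$, the positive measure $\<L_{u^{-1}}h_i|h_j\>\,\dd{u}$ approaches the Dirac mass at $e$ as $\min(i,j)\to\infty$, with total mass tending to $1$; combined with continuity of $f$ at $e$ each of the four terms converges to $f(e)$, so the sum vanishes. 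Set $\xi \defeq \lim_n\xi_n$; the same computation also gives $\|\xi\|_2^2 = \lim\<h_n|\la(f)h_n\> = f(e)$.

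To identify $\xi$ with $T$, observe that right convolution $\rho_g\colon\eta\mapsto\eta*g$ commutes with every left translation and hence lies in $\Ls(G)'$; so $T\rho_g = \rho_g T$, i.e.\ $(Th_n)*g = T(h_n*g)$ for every $g\in\cont_c(G)$. Letting $n\to\infty$, the right side tends to $Tg$ in $L^2(G)$ (since $\la(h_n)\to 1$ strongly and $T$ is bounded), while the left side tends to $\xi*g$ by Young's inequality, so $Tg = \xi*g$ on the dense subspace $\cont_c(G)$; this forces $\xi$ to be left bounded with $\la(\xi) = T$. For the pointwise identity $f = \xi^**\xi$, \eqref{371} gives $(\xi^**\xi)(t) = \<\xi|V_t\xi\> = \lim_n\<\xi_n|V_t\xi_n\>$, and since $T$ is self-adjoint and commutes with $V_t$ (as $T\in\Ls(G)$) this rewrites as $\lim_n\<\la(f)h_n|V_t h_n\>$. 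The substitution $s\mapsto st$ converts this limit into the same shape as the Cauchy calculation above, now yielding $\fmod(t^{-1})\overline{f(t^{-1})} = f^*(t)$; since $\la(f) = \la(f)^*$ and $\la$ is injective on $L^1(G)$ we have $f = f^*$, so the limit is $f(t)$, completing the proof. The one nontrivial technical ingredient is the concentration estimate used in the second paragraph (and reused in the last step); everything else is a direct application of the structure of $\Ls(G)$ and the formulas from Section~\ref{533}.
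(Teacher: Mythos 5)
Your argument is correct. Note that the paper does not prove this proposition itself --- it simply cites \cite[Proposition~4.4]{Buss:Fourier} --- so what you have done is supply a self-contained proof of the standard left-Hilbert-algebra lemma that the reference relies on: the left-bounded square root is constructed as the $L^2$-limit of $\la(f)^{1/2}h_n$ along an approximate identity, the Cauchy property coming from the concentration of the positive kernels $\braket{L_{u^{-1}}h_i}{h_j}\dd{u}$ at $e$, and the identification $\la(\xi)=\la(f)^{1/2}$ coming from the commutation of $\la(f)^{1/2}\in\Ls(G)$ with right convolutions. All the steps check out, including the final pointwise identity, where your limit computation indeed produces $\fmod(t^{-1})\overline{f(t^{-1})}=f^*(t)$ and the self-adjointness $f=f^*$ (via injectivity of $\la$ on $L^1(G)$) closes the loop. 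Two small technical remarks, neither of which affects correctness: for a general locally compact group the approximate identity must be taken as a net indexed by the neighborhoods of $e$ rather than a sequence (Cauchy nets in $L^2(G)$ still converge, so nothing changes); and the ``Young'' bound you invoke for $\eta\mapsto\eta*g$ on $L^2(G)$ reads $\|\eta*g\|_2\leq\|\eta\|_2\int_G|g(u)|\fmod(u)^{-1/2}\dd{u}$ in the nonunimodular case, which is still finite for $g\in\cont_c(G)$ and is all you need. It would be worth stating explicitly that the measures in the concentration step are positive and that their supports are contained in $\supp(h_i)\cdot\supp(h_j)^{-1}$, since that is exactly what makes the ``Dirac mass'' heuristic rigorous.
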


Finally, let us observe that $\tilde\f$ is a KMS weight (see \cite{Kustermans:KMS} for the definition of KMS weights).
The modular group of automorphisms $\{\sigma_x\}_{x\in \Real}$ associated to $\tilde\f$ is given by
\begin{equation}\label{370}
\sigma_x(a)=\nabla^{\\i x}a\nabla^{-\\i x},\quad a\in \Ls(G),\,\,x\in \Real,
\end{equation}
where $\nabla$ is the modular operator (in general unbounded, but always closed and densely defined). It is given by
$(\nabla\xi)(t)=\fmod(t)\xi(t)$ for $\xi\in \Dom(\nabla)\sbe L^2(G)$ and $t\in G$, where
the domain of $\nabla$ is
$$\Dom(\nabla)=\left\{\xi\in L^2(G)\colon\int_G|\xi(t)|^2\fmod(t)^2\dd{t}<\infty\right\}.$$
Equation~\eqref{370} implies that $\sigma_x(\la_t)=\fmod(t)^{\ii x}\la_t$ for all $t\in G$ and $x\in \Real$.
This implies that $\la_t$ is \emph{analytic} with respect to $\sigma$, that is, the function $x\mapsto \sigma_x(\la_t)$
extends to an analytic function on $\C$. Its extension is given by
\begin{equation}\label{564}
\sigma_z(\la_t)=\fmod(t)^{\ii z}\la_t\quad\mbox{for all }z\in \C,\, t\in G.
\end{equation}

\subsection{Generalized Fourier analysis}
Considering the Plancherel weight $\f$ as a generalization of the Haar measure on $\dualg$ for abelian groups,
one may wonder whether it is possible to use $\f$ to develop a Fourier analysis to non-abelian groups.
In this section, we describe some steps in this direction. More specifically, we define generalized Fourier transforms and describe
how a generalized Fourier inversion formula can be obtained. More details can be found in \cite{Buss:Fourier}.

\begin{definition}\label{367} Given $x\in \Ls(G)_\ii$, we define the \emph{Fourier transform}
of $x$ as the function $\hat{x}\colon G\to\C$ given by
$$\hat{x}(t)\defeq \tilde\f(\la_t^{-1}x),\quad t\in G.$$
\end{definition}

Note that, by definition, $\tilde\f(x)=\hat{x}(e)$ for all $x\in \Ls(G)_\ii$.
As already observed, $\la_t^{-1}=\la_{t^{-1}}$ is analytic with respect to the modular group of automorphisms of
$\tilde\f$ for all $t\in G$ (see Equation~\eqref{564}).
This implies that $\la_t^{-1}x\in \Ls(G)_\ii$ whenever $x\in \Ls(G)_\ii$.
Thus the Fourier transform is well-defined.

If $G$ is abelian, then, under the identification $\Ls(G)\cong L^\infty(\dualg)$, the Plancherel weight on $\Ls(G)$
corresponds to the usual Haar integral on $L^\infty(\dualg)$. Moreover, in this case
$\Ls(G)_\ii$ corresponds to the subalgebra $L^\infty(\dualg)\cap L^1(\dualg)\sbe L^\infty(\dualg)$ and the function
$\hat x$ corresponds to the Fourier transform of the function $f\in L^\infty(\dualg)\cap L^1(\dualg)$ associated to $x$, that is,
to the function $t\mapsto\hat{f}(t)\defeq \int_{\dualg}\braket{\chi}{t} f(\chi)\dd{\chi}$.
Proposition 2.5 in \cite{Buss:Fourier} describes some basic properties of the Fourier transform.
In particular, it says that $\hat{x}$ belongs to $\cont_e(G)$ for all $x\in \Ls(G)_\ii$.
In particular, $\hat{x}$ is continuous. Moreover, the Fourier transform $\la(f)$ is equal to $f$ for every function $f\in \cont_e(G)$.
If $\cont_e(G)$ is equipped with the usual operation of convolution and the involution
$f^*(t)\defeq \fmod(t^{-1})\overline{f(t^{-1})}$, then $\cont_e(G)$ is a \Star{}algebra and the map
$$\Ls(G)_\ii\ni x\mapsto \hat{x}\in \cont_e(G)$$
is an isomorphism of \Star{}algebras. The inverse map is given by $f\mapsto \la(f)$. In particular, we have
$$\widehat{xy}=\hat{x}*\hat{y},\quad\mbox{e}\quad \widehat{x^*}=\hat{x}^*\quad\mbox{for all }x,y\in\Ls(G)_\ii.$$

In what follows, we generalize the constructions above to allow operator valued Fourier transforms
using the slice map $\id_\alg\otimes\f\colon\M(\alg\otimes \csg)_\ii\to \M(\alg)$ defined in Section~\ref{sec:WeightSlices},
where $\alg$ is an arbitrary \cstar{algebra}.

\begin{definition}\label{FourierCoefficient} Let $\alg$ be a \cstar{algebra} and let $a\in \MM{\alg\otimes\csg}$ be an integrable element.
The \emph{Fourier coefficient} of $a$ at $t\in G$ is the element $\hat{a}(t)\in \M(\alg)$ defined by
$$\hat{a}(t)\defeq (\id\otimes\f)\bigl((1\otimes\la_t^{-1})a\bigr).$$
The map $t\mapsto \hat{a}(t)$ from $G$ to $\M(\alg)$ is the \emph{\textup(generalized\textup) Fourier transform} of $a$.
\end{definition}

As already observed, $\la_t^{-1}=\la_{t^{-1}}$ is an analytic element for all $t\in G$.
This implies that $(1_\alg\otimes\la_t^{-1})\cdot\MM{\alg\otimes\csg}_\ii\sbe\MM{\alg\otimes\csg}_\ii$
(see \cite[Proposition 3.28]{Kustermans-Vaes:Weight}), so that the Fourier transform is well-defined.

Suppose that the group $G$ is abelian. As already observed, there is a canonical isomorphism
$\M\bigl(\alg\otimes \csg\bigr)\cong\cont_b\bigl(\dualg,\M^{\st}(\alg)\bigr)$, the \cstar{algebra}
of bounded strictly continuous functions $\dualg\to\M(\alg)$.
Under this identification, the element $\bigl((1\otimes\la_{t^{-1}})a\bigr)$
corresponds to the function $x\mapsto \braket{x}{t}a(x)$. We also observed in Section~\ref{sec:WeightSlices} that an element
$a$ in $\MM{\alg\otimes\csg}^+$ is integrable if and only if the corresponding function $x\mapsto a(x)$ in
$\cont_b\bigl(\dualg,\M^\st(\alg)\bigr)$ is strictly-unconditionally integrable. Moreover,
in this case $(\id\otimes\f)(a)$ coincides with $\int_{\dualg}^\su a(x)\dd{x}$, where
the symbol $\int^\su$ represents the strict unconditional integral.
Recall that it is defined as the strict limit of the net $\bigl(\int_K^\st a(x)\dd{x}\bigr)_{K\in \Cl}$
of strict Bochner integrals, where $\Cl$ denotes the set of all compact subsets
of $\dualg$ directed by inclusion.

We conclude that the Fourier transform of an integrable element $a$ in the space $\MM{\alg\otimes\csg}\cong\cont_b\bigl(\dualg,\M^\st(\alg)\bigr)$
coincides with the Fourier transform defined by Exel in \cite{Exel:Unconditional}:
$$\hat{a}(t)=\int_{\dualg}^\su\braket{x}{t} a(x)\dd{x}.$$

Finally, we mention the main result in \cite{Buss:Fourier}.

\begin{theorem}[Fourier's inversion Theorem]\label{the:FourierInversionTheorem}
Let $G$ be a locally compact group and let $\alg$ be a \cstar{algebra}.
Let $a\in \M\bigl(\alg\otimes \csg\bigr)$ be an integrable element and
suppose that the function $G\ni t\mapsto \hat{a}(t)\otimes\la_t\in \M\bigl(\alg\otimes \csg\bigr)$
is strictly-unconditionally integrable. Then
$$a=\int_G^\su \hat{a}(t)\otimes\la_t\dd{t}.$$
\end{theorem}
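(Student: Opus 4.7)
My plan is to reduce to the scalar case $\alg = \C$ by slicing, and then to verify the resulting scalar identity using the description of $\MM{\csg}_\ii$ in terms of $\cont_e(G)$ from Section~\ref{533}. Fix a positive functional $\theta \in \alg^*_+$. By Proposition~\ref{prop:chacterization of integrable elements}, the slice $y_\theta \defeq (\theta \otimes \id_{\csg})(a) \in \MM{\csg}$ is integrable with respect to $\f$, and the compatibility $\f \circ (\theta \otimes \id_{\csg}) = \theta \circ (\id \otimes \f)$ on integrable multipliers yields $\hat{y}_\theta(t) = \theta(\hat{a}(t))$. Since $\theta \otimes \id_{\csg}$ is strictly continuous on $\MM{\alg \otimes \csg}$, it preserves strict-unconditional Bochner integrals over compact subsets of $G$; combined with the fact that the functionals of the form $\theta \otimes \omega$ separate multipliers, it suffices to prove the scalar statement: for every $y \in \MM{\csg}_\ii$ whose Fourier transform satisfies the integrability hypothesis, $y = \int_G^\su \hat{y}(t)\la_t\dd{t}$.

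In the scalar setting, Section~\ref{533} provides $\MM{\csg}_\ii \sbe \Ls(G)_\ii = \la(\cont_e(G))$, so $y = \la(f)$ for some $f \in \cont_e(G)$, with $\hat{y}(t) = f(t)$. Set $b \defeq \int_G^\su f(t)\la_t\dd{t} \in \MM{\csg} \sbe \Ls(L^2(G))$; the goal is $b = \la(f)$. I would test equality against the vector functionals $\omega_{\xi,\eta}(x) \defeq \braket{\eta}{x\xi}$ with $\xi, \eta \in \cont_c(G)$, which separate points of $\Ls(L^2(G))$. The strict convergence of the defining net $I_K \defeq \int_K f(t)\la_t\dd{t}$ to $b$ gives $I_K c \to bc$ in norm for every $c \in \csg$; combined with nondegeneracy of $\csg$ acting on $L^2(G)$ and Cohen factorization, this yields $I_K\xi \to b\xi$ in $L^2$-norm for every $\xi \in L^2(G)$. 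Hence $\omega_{\xi,\eta}(b) = \lim_K \int_K f(t)\braket{\eta}{\la_t\xi}\dd{t}$. The scalar integrand $t \mapsto f(t)\braket{\eta}{\la_t\xi}$ is continuous with compact support in $\supp(\eta)\cdot\supp(\xi)^{-1}$, so the limit equals the absolutely convergent integral $\int_G f(t)\braket{\eta}{\la_t\xi}\dd{t}$; by Fubini this is $\braket{\eta}{f * \xi} = \omega_{\xi,\eta}(\la(f))$. Therefore $b = \la(f)$, as desired.

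The central technical obstacle is that the vector functionals $\omega_{\xi,\eta}$ are only weakly-$*$ continuous, so they cannot be naively interchanged with the strict-unconditional integral defining $b$. The Cohen factorization step circumvents this: strict convergence in $\MM{\csg}$ only guarantees norm convergence after right multiplication by elements of $\csg$, but factoring $\xi = c\xi'$ with $c \in \csg$ transports this to pointwise norm convergence $I_K\xi \to b\xi$ on all of $L^2(G)$. The second key ingredient is the compactness of $\supp(\eta)\cdot\supp(\xi)^{-1}$, which both legitimizes the Fubini step and removes any need for $f$ to lie in $L^1(G)$; this is crucial since $\cont_e(G)$ is in general not contained in $L^1(G)$.
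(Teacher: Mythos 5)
The paper does not actually prove Theorem~\ref{the:FourierInversionTheorem}: it is quoted verbatim as ``the main result in \cite{Buss:Fourier}'', so there is no in-paper argument to compare yours against. Judged on its own, your proof is correct and essentially complete. The reduction to $\alg=\C$ via the slices $\theta\otimes\id$ is legitimate because these slices separate points of $\MM{\alg\otimes\csg}$ and, by Proposition~\ref{prop:chacterization of integrable elements} (extended by linearity to the integrable element $(1\otimes\la_t^{-1})a$), they intertwine $\f$ and $\id\otimes\f$, giving $\hat y_\theta=\theta\circ\hat a$. The scalar step correctly exploits the description $\MM{\csg}_\ii\sbe\Ls(G)_\ii=\la(\cont_e(G))$ from Section~\ref{533}, and your two closing observations --- Cohen factorization to convert strict convergence of $(I_K)$ into pointwise norm convergence on $L^2(G)$, and the compactness of $\supp(\eta)\cdot\supp(\xi)^{-1}$ to dispense with any $L^1$ hypothesis on $f$ --- are exactly the right technical pivots.

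Two steps deserve to be made explicit. First, in the reduction you need $\theta\otimes\id$ to carry the strictly convergent (a priori unbounded) net $\bigl(\int_K\hat a(t)\otimes\la_t\dd{t}\bigr)_K$ to a strictly convergent net in $\MM{\csg}$; this is true, but the clean way to see it is to factor $\theta=\theta'(\,\cdot\,a_0)$ (Cohen factorization in $\alg^*$, or Cauchy--Schwarz for positive $\theta$), so that $(\theta\otimes\id)(x)c=(\theta'\otimes\id)\bigl(x(a_0\otimes c)\bigr)$ exhibits strict-to-strict continuity without any boundedness assumption. Second, the final identification $\braket{\eta}{f*\xi}=\braket{\eta}{\la(f)\xi}$ for $f=\alpha^**\beta\in\cont_e(G)$ is not a purely formal Fubini computation: the associativity $\alpha^**(\beta*\xi)=(\alpha^**\beta)*\xi$ involves a triple integral that is not absolutely convergent for general left bounded $\alpha,\beta\in L^2(G)$, and one should instead invoke the standard left Hilbert algebra fact (implicit in Section~\ref{533}) that $\la(\xi)^*\la(\eta)$ acts on $\cont_c(G)$ by left convolution with the continuous function $\xi^**\eta$. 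With these two points spelled out, the argument stands.
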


Theorem~\ref{the:FourierInversionTheorem} extends to non-abelian groups the version of the Fourier inversion Theorem
obtained by Exel in \cite{Exel:Unconditional} for abelian groups.
In fact, suppose that $G$ is abelian. Then, through the identification
$\M\bigl(\alg\otimes \csg\bigr)\cong \cont_b\bigl(\dualg,\M^{\st}(\alg)\bigr)$,
the element $\hat{a}(t)\otimes\la_t$ corresponds to the function
$x\mapsto \overline{\braket{x}{t}}\hat{a}(t)$. Thus, Theorem~\ref{the:FourierInversionTheorem} says that
$$\int_G^\su\overline{\braket{x}{t}} \hat{a}(t)\dd{t}=a(x)$$
whenever $a$ is integrable and the strict unconditional integral above exists.
In this case, the Fourier transform $\hat{a}$ is
given by $\hat{a}(t)=\int_{\dualg}^\su\braket{\eta}{t}a(\eta)\dd{\eta}$.
Thus we may also rewrite the above equation in the form of a generalized Fourier inversion formula:
$$\int_G^\su\overline{\braket{x}{t}} \left(\int_{\dualg}^\su\braket{y}{t}a(y)\dd{y}\right)\dd{t}=a(x).$$
Exel's version of Fourier's inversion Theorem starts with a positive-definite, compactly supported, strictly continuous function
$f\colon G\to\M(\alg)$. Apparently, our version does not require any positivity condition on the functions involved. However, we
implicitly have such a condition  because integrable elements are defined in terms of positive elements.

To obtain a more precise relation between our version of  Fourier's inversion Theorem and Exel's one,
let us first recall that a function $f\colon G\to\M(\alg)$ is \emph{positive-definite} if,
for every finite subset $\{t_1,\ldots,t_n\}$ of $G$, the matrix $\bigl(f(t_i^{-1}t_j)\bigr)_{i,j}$ is positive as an element of
the \cstar{algebra} $\Mat_n\bigl(\M(\alg)\bigr)$ of all $n\times n$ matrices with entries in $\M(\alg)$.

A strictly continuous function $f\colon G\to\M(\alg)$ is positive-definite if and only if it has the form $f(t)=T^*w_tT$
for some strongly continuous unitary representation $w\colon G\to \Ls(\hilm)$ on some Hilbert $\alg$-module $\E$ and
$T\colon\alg\to\hilm$ is some adjointable operator. Basically, this result is a generalized version of Naimark's Theorem on
the structure of positive-definite functions (\cite{Neumark:PositiveDefinite,Paulsen:CompletelyBoundedMaps}).
For a detailed proof of this fact see \cite[Proposition 4.2]{Buss:Fourier}.
This characterization implies that such functions are automatically bounded and left uniformly continuous
in the strong topology of $\M(\alg)$, that is, for all $\xi\in \alg$, $\|f(ts)\xi- f(t)\xi\|$ converges
to zero uniformly in $t$ as $s$ converges to $e$ (the identity of the group $G$).

Another characterization of positive-definite functions is given in \cite{Buss:Fourier} using the left regular representation
$\la:C_c(G)\to\Ls(L^2(G))$ -- defined by $\la(f)\xi=f*\xi$ for all $f\in C_c(G)$ and $\xi\in L^2(G)$. To describe this,
we need to extend $\la$ to the space $\cont_c(G,\M^\st(\alg))$ of compactly supported strictly continuous functions $f\colon G\to\M(\alg)$, where
$\alg$ is a given \cstar{algebra}. Note that $\cont_c\bigl(G,\M^\st(\alg)\bigr)$ is a \Star{}algebra with the usual operations of
convolution and involution (see \cite[Proposition C.6]{Echterhoff-Kaliszewski-Quigg-Raeburn:Categorical}):
$$f*g(t)\defeq \int_G^\st f(s)g(s^{-1}t)\dd{t}\quad\mbox{and}\quad f^*(t)\defeq \Delta(t^{-1})f(t^{-1})^*$$
for all $f,g\in\cont_c(G,\M^\st(\alg))$. Hence we can define $\la_\alg\colon\cont_c(G,\M^\st(\alg))\to \Ls(L^2(G,\alg))$
by $\la_\alg(f)\xi\defeq f*\xi$ for all $f\in \cont_c(G,\M^\st(\alg))$ and $\xi\in \cont_c(G,\alg)$.
Here, $L^2(G,\alg)$ denotes the Hilbert $\alg$-module
defined as the completion of $C_c(G,\alg)$ with respect to the inner product
$\int_G\xi(t)^*\eta(t)\dd{t}$ for all $\xi,\eta\in C_c(G,\alg)$. Note that $\M(\alg\otimes \csg)$
can be seen in a canonical way as a \cstar{subalgebra} of $\Ls(L^2(G,\alg))$ and in this way
$\la_\alg(f)\in \MM{\alg\otimes\csg}$ for all $f\in \cont_c(G,\M^\st(\alg))$.
Now we can describe the result mentioned above (Proposition~4.6 in \cite{Buss:Fourier}):

\begin{proposition} \label{prop:positive-definite implica integravel}
Let $f\in \cont_c(G,\M^\st(\alg))$. The operator $\la_\alg(f)\in \Ls(L^2(G,\alg))$ is positive
if and only if the pointwise product $\Delta^{\frac{1}{2}}\cdot f$ is a positive-definite function. Moreover, in this case
the element $a\defeq \la_\alg(f)\in \MM{\alg\otimes\csg}$ is integrable \textup(with respect to the Plancherel weight on $\csg$\textup) and
we have $\hat{a}=f$. In particular, $(\id_\alg\otimes\f)(a)=\hat{a}(e)=f(e)$.
Moreover, the Fourier inversion Theorem give us the following formula for the operator $\la_\alg(f)$\textup:
$$\la_\alg(f)=\int_G^\su f(t)\otimes\la_t\dd{t}.$$
\end{proposition}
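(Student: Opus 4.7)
The plan is to reduce everything to the scalar version recorded in Proposition~\ref{349} via slicing with positive functionals on $\alg$, and then to apply Theorem~\ref{the:FourierInversionTheorem} to obtain the displayed formula at the end.

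\textbf{Step 1 (positivity equivalence).} For $\xi\in \cont_c(G,\alg)$ I would unfold the inner product $\<\la_\alg(f)\xi,\xi\>=\int\int \xi(t)^*f(s)\xi(s^{-1}t)\dd{s}\dd{t}$, perform the change of variable $s=tr^{-1}$ (which introduces a factor $\fmod(r)^{-1}$ coming from left Haar measure and inversion), and then substitute $\eta(t)\defeq \fmod(t)^{-\frac{1}{2}}\xi(t)$. The modular factors rearrange symmetrically to give
\[
\<\la_\alg(f)\xi,\xi\>=\int_G\int_G \eta(t)^* F(tr^{-1})\eta(r)\dd{r}\dd{t},\qquad F\defeq \fmod^{\frac{1}{2}}\cdot f.
\]
Since $\xi\mapsto\eta$ is a bijection on $\cont_c(G,\alg)$, positivity of $\la_\alg(f)$ on $L^2(G,\alg)$ is equivalent to the integrated form of positive-definiteness of $F$. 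The latter is equivalent, for continuous $F$, to pointwise positive-definiteness; the passage between the two formulations uses the Naimark-type dilation recalled just before the proposition, together with an approximate identity argument in $\cont_c(G,\alg)$.

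\textbf{Step 2 (integrability and Fourier coefficients).} Assume now $\la_\alg(f)\ge 0$ and fix $\theta\in \alg^*_+$. A direct computation on $\cont_c(G,\alg)$ yields the identity $(\theta\otimes \id_{\csg})(\la_\alg(f))=\la(\theta\circ f)$ in $\MM{\csg}$, where $\theta\circ f\in \cont_c(G)$ and $\la(\theta\circ f)\ge 0$. Proposition~\ref{349} then gives $\la(\theta\circ f)\in \csg_\ii^+$ with $\f(\la(\theta\circ f))=(\theta\circ f)(e)=\theta(f(e))$. Proposition~\ref{prop:chacterization of integrable elements}, applied with $a=f(e)\in \MM{\alg}$, immediately yields that $\la_\alg(f)\in \MM{\alg\otimes\csg}_\ii$ and $(\id_\alg\otimes\f)(\la_\alg(f))=f(e)$. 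For the Fourier coefficient at a general $t\in G$, I would first check the elementary identity $(1\otimes \la_{t^{-1}})\la_\alg(f)=\la_\alg(L_{t^{-1}}f)$ on $\cont_c(G,\alg)$, where $L_{t^{-1}}f(s)\defeq f(ts)$; since $\la_{t^{-1}}$ is analytic for the modular group of $\tilde\f$ by Equation~\eqref{564}, this element is again integrable. Slicing once more with $\theta$ and using the scalar Fourier formula $\tilde\f(\la_{t^{-1}}\la(g))=g(t)$ (valid whenever $g\in \cont_e(G)$, which covers $g=\theta\circ f$ by Step~2) gives $\theta(\hat{a}(t))=\theta(f(t))$ for every $\theta\in \alg^*_+$, hence $\hat{a}(t)=f(t)$.

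\textbf{Step 3 (Fourier inversion).} Having shown $a\defeq \la_\alg(f)$ is integrable with $\hat{a}=f$, the function $t\mapsto \hat{a}(t)\otimes\la_t=f(t)\otimes\la_t$ is strictly continuous with compact support $\supp(f)$, so its strict unconditional integral reduces to the ordinary strict Bochner integral over $\supp(f)$ and exists trivially. Theorem~\ref{the:FourierInversionTheorem} then identifies this integral with $a=\la_\alg(f)$, yielding the displayed formula.

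The main obstacle I anticipate is Step~1: the bookkeeping with the modular function must be carried out carefully so that $\fmod^{\frac{1}{2}}$ appears symmetrically, and one must invoke the correct version of the Naimark-type characterization to move between the integrated and pointwise positive-definiteness of $F$. Steps~2 and~3 are then essentially formal, given Proposition~\ref{349}, Proposition~\ref{prop:chacterization of integrable elements}, and the Fourier inversion theorem.
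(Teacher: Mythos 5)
The paper does not actually prove this statement: it is quoted verbatim as Proposition~4.6 of \cite{Buss:Fourier}, so there is no in-text argument to compare yours against. Judged on its own, your reconstruction is correct and assembles precisely the ingredients the paper makes available for this purpose. In Step~1, the substitution $s=tr^{-1}$ does produce the factor $\fmod(r)^{-1}$, and after $\xi(t)=\fmod(t)^{1/2}\eta(t)$ the kernel becomes $\eta(t)^*\fmod(tr^{-1})^{1/2}f(tr^{-1})\eta(r)$, so positivity of $\la_\alg(f)$ is exactly the integrated positive-definiteness of $\fmod^{1/2}\cdot f$; the passage to the pointwise matrix condition is the standard approximate-identity/dilation argument, and you are right to flag it as the only delicate point. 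In Step~2, the identity $(\theta\otimes\id)(\la_\alg(f))=\la(\theta\circ f)$ is most cleanly obtained from $\la_\alg(f)=\int_G^\st f(t)\otimes\la_t\dd{t}$, which follows directly from the definition of $\la_\alg$ as convolution --- note that this already yields the final displayed formula, since (as the paper itself remarks after the proposition) $\int^\su$ coincides with $\int^\st$ for a compactly supported strictly continuous integrand, so Theorem~\ref{the:FourierInversionTheorem} is not strictly needed for that last identity, only for the consistency statement $\hat a=f$. Combining Proposition~\ref{349} with Proposition~\ref{prop:chacterization of integrable elements} gives integrability; to conclude $(\id_\alg\otimes\f)(a)=f(e)$ you should add the (standard) observation that $\theta\circ(\id_\alg\otimes\f)=\f\circ(\theta\otimes\id)$ on integrable elements. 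Your computation of $\hat a(t)$ by slicing, using $(1\otimes\la_{t^{-1}})\la_\alg(f)=\la_\alg(g)$ with $g(s)=f(ts)$ and the scalar identity $\widehat{\la(g)}=g$ for $g\in\cont_e(G)$, is correct, and since positive functionals separate points of $\M(\alg)$, the equalities $\theta(\hat a(t))=\theta(f(t))$ for all $\theta\in\alg^*_+$ do suffice. Step~3 is immediate. In short: a sound proof, structured in the natural way one would expect the cited reference to argue.
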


Note that the strict unconditional integral $\int^\su$ above coincides with the strict integral $\int^\st$
because the function $t\mapsto f(t)\otimes\la_t$ is strictly continuous and has compact support (and therefore is strictly Bochner integrable).
Therefore, we may rewrite the above formula as
\begin{equation}\label{eq:regular representation}
\la_\alg(f)=\int_G^\st f(t)\otimes\la_t\dd{t}.
\end{equation}

\subsection{Fell bundles}

In this section, we select some basic facts that will be used later. We start by recalling the definition of Fell bundles.
For more details on Fell bundles, we indicate \cite{Doran-Fell:Representations_2}.

\begin{definition} Let $G$ be a locally compact group. A \emph{Fell bundle} over $G$ is a (continuous)
Banach bundle $\fell=\{\fell_t\}_{t\in G}$ over $G$ endowed with a continuous multiplication
$$\fell\times\fell\to\fell,\quad (b,c)\mapsto b\cdot c$$ and a continuous involution
$$\fell\mapsto \fell,\quad b\mapsto b^*,$$ satisfying
\begin{enumerate}
\item[\textup{(i)}] $\fell_t\cdot\fell_s\sbe \fell_{ts}$ and $\fell_t^*=\fell_{t^{-1}}$ for all $t,s\in G$;
\item[\textup{(ii)}] $(a\cdot b)\cdot c=a\cdot(b\cdot c)$, $(a\cdot b)^*=b^*\cdot a^*$ and $(a^*)^*=a$ for all $a,b,c\in \fell$;
\item[\textup{(iii)}] $\|b\cdot c\|\leq\|b\|\|c\|$ and $\|b^*\|=\|b\|$ for all $b,c\in \fell$;
\item[\textup{(iv)}] $\|b^*b\|_{\fell_e}=\|b\|^2_{\fell_t}$ whenever $b\in \fell_t$ ($e$ denotes the identity of $G$); and
\item[\textup{(v)}] $b^*b\geq 0$ (in $\fell_e$) for all $b\in \fell$.
\end{enumerate}
\end{definition}
\pc
Let $\cont_c(\fell)$ be the space of compactly supported continuous sections of $\fell$.
For $\xi,\eta\in \cont_c(\fell)$, we define
$$(\xi*\eta)(t)\defeq \int_G\xi(s)\eta(s^{-1}t)\dd{s}$$
and
$$\xi^*(t)\defeq \fmod(t)^{-1}\xi(t^{-1})^*,$$
where $\fmod$ denotes the modular function of $G$.
Then $\xi*\eta\in \cont_c(\fell)$ and $\xi^*\in \cont_c(\fell)$, so that $\cont_c(\fell)$ becomes a \Star{}algebra.
By definition, $L^1(\fell)$ is the completion of $\cont_c(\fell)$ with respect to the $L^1$-norm:
$$\|\xi\|_1\defeq \int_G\|\xi(t)\|\dd{t},$$
and the \emph{full \cstar{algebra}} $\cstarfell$ of $\fell$ is the enveloping \cstar{algebra} of $L^1(\fell)$.

Let $L^2(\fell)$ be the Hilbert $\fell_e$-module  defined as the completion of $\cont_c(\fell)$
with respect to $\fell_e$-valued inner product:
$$\<\xi|\eta\>_{\fell_e}\defeq \int_\Gamma\xi(t)^*\eta(t)dx,$$
and the right $\fell_e$-action:
$$(\xi\cdot b)(t)\defeq \xi(t)\cdot b.$$
The \emph{left regular representation} of $\fell$ is the map
$$\la_\fell\colon\cstarfell\to \Ls\bigl(L^2(\fell)\bigr)$$
given by $\la_\fell(\xi)\eta=\xi*\eta$ for all $\xi,\eta\in \cont_c(\fell)$.
The \emph{reduced \cstar{algebra}} of $\fell$ is, by definition,
$\cstarfellr\defeq \la_\fell\bigl(\cstarfell\bigr)$.

\begin{example}\label{427} {\bf (1)} Let $(\algb,G,\beta)$ be a \cstar{dynamical system},
that is, a strongly continuous action $\beta\colon G\to\aut(\algb)$ of $G$ on a \cstar{algebra} $\algb$.
Then the trivial bundle $\fell\defeq \algb\times G$ with the algebraic operations
$$(b,t)\cdot(c,s)\defeq (b\beta_t(c),ts),\quad (b,t)^*\defeq (\beta_{t^{-1}}(b^*),t^{-1}),$$
is a Fell bundle over $G$, called the \emph{semidirect product} of $(\algb,G,\beta)$
and denoted by $\fell=\algb\times_\beta G$. In this case, the full \cstar{algebra}
$\cstarfell$ is isomorphic to the full crossed product $C^*(G,\algb)$, and the reduced \cstar{algebra} $\cstarfellr$
is isomorphic to the reduced crossed product $C^*_\red(G,\algb)$ of the dynamical system
$(\algb,G,\beta)$. In particular, if $\algb=\C$ with the trivial action of $G$, then $\cstarfell\cong C^*(G)$
and $\cstarfellr\cong C^*_\red(G)$. In this case, $\la_\fell$ corresponds to the left regular representation of $G$.

{\bf (2)} Let $(\algb,G,\beta)$ be a \cstar{dynamical system}. Suppose that $I$ is a (closed, two-sided) ideal of $\algb$.
For each $t\in G$, define $I_t\defeq  I\cap\beta_t(I)$ (which is also a closed, two-sided ideal of $\algb$) and the map
$\theta_t:I_{t^{-1}}\to I_t$ by $\theta_t(b)\defeq \beta_t(b)$. Then $\theta\defeq \{I_t,\theta_t\}_{t\in G}$ is a
\emph{partial action} of $G$ on $I$ as defined by Ruy Exel in \cite{Exel:TwistedPartialActions}.
It is called the \emph{restriction} of $\beta$ to the ideal $I$.

Define
$$\fell\defeq \{(b,t):b\in  I_t\}\sbe \algb\times G,\quad \fell_t\defeq I_t\times\{t\}\cong I_t$$
with the operations
$$(b,t)\cdot(c,s)\defeq (b\beta_t(c),ts),\quad (b,t)^*\defeq (\beta_{t^{-1}}(b^*),t^{-1}).$$
Then $\fell$ is a Fell bundle over $G$, called the \emph{semidirect product} of the partial dynamical system
$(I,G,\theta)$; it is denoted by $\fell=I\times_\theta G$. One has
$$\cstarfell\cong C^*(G,I,\theta)\quad\mbox{e}\quad \cstarfellr\cong C_\red^*(G,I,\theta),$$
where $C^*(G,I,\theta)$ and $C_\red^*(G,I,\theta)$ denote the full and reduced crossed products of $(I,G,\theta)$.

More generally, one can construct Fell bundles from
\emph{twisted partial actions}, and up to a certain regularity condition
all Fell bundles are of this form (see \cite{Exel:TwistedPartialActions} for details).
\end{example}

\begin{definition} \label{def:RepresentationFellBundle} Let $\hilm$ be a Hilbert $\algb$-module, and let $\fell=\{\fell_t\}$
be a Fell bundle over a locally compact group $G$.
A \emph{representation} of $\fell$ on $\hilm$ is a map
$\pi$ from $\fell$ to the \cstar{}algebra $\Ls(\hilm)$ of all adjointable operators on $\E$, satisfying
\begin{enumerate}
\item[\textup{(i)}] the restriction $\pi_{|_{\fell_t}}\colon\fell_t\to \Ls(\hilm)$ is linear for all $t\in G$;
\item[\textup{(ii)}] $\pi(bc)=\pi(b)\pi(c)$ for all $b,c\in \fell$;
\item[\textup{(iii)}] $\pi(b)^*=\pi(b^*)$ for all $b\in \fell$; and
\item[\textup{(iv)}] for each $\xi\in\hilm$, the map $\fell\ni b\to \pi(b)\xi\in \hilm$ is continuous.
\end{enumerate}
A representation $\pi$ of $\fell$ on $\hilm$ is called {\it nondegenerate} if $\cspn(\pi(\fell)\hilm)=\hilm$.
It is called {\it isometric} if $\|\pi(b)\|=\|b\|$ for all $b\in \fell$.
\end{definition}

Every representation $\pi$ of $\fell$ on $\hilm$ corresponds to a unique representation (still denoted by $\pi$) of $\cstarfell$,
called the \emph{integrated form} of $\pi$, which is determined by
$$\pi(f)\xi=\int_G\pi\bigl(f(t)\bigr)\xi \dd{t}\quad\mbox{ for all } f\in L^1(\fell)\mbox{ and } \xi\in \hilm.$$
This induces a bijective correspondence between representations of
$\fell$ and representations of $\cstarfell$. This correspondence
preserves nondegeneracy. Note that condition (iv) above implies that the function $G\ni t\mapsto \pi\bigl(f(t)\bigr)\in \Ls(\hilm)$ is
strongly continuous for all $f\in \cont_c(\fell)$. Since it is bounded, it is also strictly continuous. Since $\cont_c(\fell)$ is
dense in $L^1(\fell)$, it follows that the function $G\ni t\mapsto \pi\bigl(f(t)\bigr)\in \Ls(\hilm)=\MM{\K(\hilm)}$
is strictly mensurable for all $f\in L^1(\fell)$, that is,
for all $x\in \K(\hilm)$, a map $G\ni t\mapsto \pi\bigl(f(t)\bigr)x\in \K(\hilm)$ is measurable.
Here $\K(\E)$ denotes the \cstar{algebra} of compact operators on $\E$. Moreover, we have
$$\int_G\bigl\|\pi\bigl(f(t)\bigr)x\bigr\|\dd{t}\leq\left(\int_G\|f(t)\|\dd{t}\right)\|x\|<\infty.$$
It follows that the map $G\ni t\mapsto \pi\bigl(f(t)\bigr)\in \MM{\K(\hilm)}$ is strictly integrable for all $f\in L^1(\fell)$.
Hence the above formula for the integrated form can be rewritten as
\begin{equation}\label{383}
\pi(f)=\int_G^\st\pi\bigl(f(t)\bigr)\dd{t}\quad\mbox{ for all } f\in L^1(\fell),
\end{equation}
where the superscript "$\st$" indicates strict integration.

Let $\fell$ be a Fell bundle over $G$, and for each $t\in G$, let $\Phi_t\colon\fell_t\to \MM{\cstarfell}$ be the map defined by
$\Phi_t(b)\xi|_s=b\xi(t^{-1}s)$ and $\xi\Phi_t(b)|_s=\fmod(t^{-1})\xi(st^{-1})b$ for all $b\in \fell_t$,
$\xi\in \cont_c(\fell)$ and $s\in G$. Let $\Psi_t\colon\fell_t\to \MM{\cstarfellr}$ be the composition
$\Psi_t=\la_\fell\circ\Phi_t$. Note that, identifying
$\MM{\cstarfellr}\sbe \Ls\bigl(L^2(\fell)\bigr)$, $\Psi_t$ is given by
$\Psi_t(b)\xi|_s=b\xi(t^{-1}s)$ for all $\xi\in \cont_c(\fell)\sbe L^2(\fell)$.

\begin{proposition}\label{prop:embedding of Fell bundles} Let $\fell$ be a Fell bundle over $G$.
With the above notations, we define $\Phi\colon\fell\to \MM{\cstarfell}$ and $\Psi\colon\fell\to \MM{\cstarfellr}$ by
$\Phi(b)=\Phi_t(b)$ and $\Psi(b)=\Psi_t(b)$ for all $b\in \fell_t$.
Then $\Phi$ and $\Psi$ are nondegenerate, isometric representations
of $\fell$. The integrated forms are, respectively,
the inclusion $\cstarfell\hookrightarrow \MM{\cstarfell}$ and the
left regular representation $\la_\fell\colon\cstarfell\to \MM{\cstarfellr}$.
\end{proposition}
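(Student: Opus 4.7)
The plan is to treat $\Phi$ and $\Psi$ as essentially convolution operators with a "point mass" $b\delta_t$, verify the algebraic axioms of a Fell bundle representation on the dense subalgebra $\cont_c(\fell)$, and then recognize the integrated form via formula \eqref{383}. The other properties will then follow from the integrated form being the inclusion, respectively $\la_\fell$.

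First I would check that each $\Phi_t(b)$ actually defines a multiplier of $\cstarfell$. The given formulas map $\cont_c(\fell)$ into itself, and a change of variables $s'=st^{-1}$ in the convolution $\eta*(\Phi_t(b)\xi)$ yields $(\eta\,\Phi_t(b))*\xi = \eta*(\Phi_t(b)\xi)$, so the pair is a double centralizer of $\cont_c(\fell)$ and is $L^1$-bounded by $\|b\|$. To upgrade from $L^1$ to the $C^*$-norm I would pass through the bijective correspondence between representations of $\cstarfell$ and representations of $\fell$ from Definition~\ref{def:RepresentationFellBundle}: for any nondegenerate representation $\pi$ of $\cstarfell$ with associated $\tilde\pi\colon\fell\to \Ls(\hilm)$, formula \eqref{383} gives $\pi\bigl(\Phi_t(b)\xi\bigr)=\tilde\pi(b)\pi(\xi)$ and $\pi\bigl(\xi\Phi_t(b)\bigr)=\pi(\xi)\tilde\pi(b)$ on $\cont_c(\fell)$. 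Taking $\pi$ faithful nondegenerate, this realizes $\Phi_t(b)$ as a bounded multiplier with $\|\Phi_t(b)\|=\|\tilde\pi(b)\|=\|b\|$, settling the isometric property at the same time.

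Next I would verify the representation axioms by direct calculation on $\cont_c(\fell)$. Linearity of $\Phi_t$ is immediate. For multiplicativity, for $b\in\fell_t$, $c\in\fell_s$ and $\xi\in\cont_c(\fell)$,
\[
\bigl(\Phi_t(b)\Phi_s(c)\xi\bigr)(r)=b\cdot\bigl(c\,\xi(s^{-1}t^{-1}r)\bigr)=(bc)\,\xi\bigl((ts)^{-1}r\bigr)=\bigl(\Phi_{ts}(bc)\xi\bigr)(r).
\]
For the involution, one computes the adjoint double centralizer via $L^*(\eta)=(R(\eta^*))^*$; the modular factor in the definition of $\xi^*$ cancels that appearing in the right-multiplication formula, yielding $\Phi_t(b)^*=\Phi_{t^{-1}}(b^*)$. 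Continuity of $b\mapsto \Phi(b)\xi$ for fixed $\xi\in\cont_c(\fell)$ follows from joint continuity of the Fell bundle operations together with the compact support of $\xi$.

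Finally I would compute the integrated forms. For $f\in\cont_c(\fell)$, formula \eqref{383} gives $\Phi(f)=\int_G^\st \Phi_t(f(t))\dd t$, and applied to $\eta\in\cont_c(\fell)$ this yields $\bigl(\Phi(f)\eta\bigr)(s)=\int_G f(t)\eta(t^{-1}s)\dd t = (f*\eta)(s)$; hence $\Phi(f)$ is multiplication by $f$, i.e.\ the canonical inclusion $\cstarfell\hookrightarrow\MM{\cstarfell}$. Nondegeneracy of $\Phi$ now follows because $\cspn\bigl(\Phi(\cont_c(\fell))\,\cstarfell\bigr)\supseteq \cont_c(\fell)*\cont_c(\fell)$, which is dense in $\cstarfell$. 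For $\Psi=\la_\fell\circ\Phi$ all the assertions transfer by composing with the nondegenerate $^*$-homomorphism $\la_\fell$ and its strict extension $\MM{\cstarfell}\to\MM{\cstarfellr}$; the integrated form becomes $\la_\fell$. The main technical obstacle is the first step: upgrading the $L^1$-estimate to a $C^*$-estimate showing $\Phi_t(b)\in\MM{\cstarfell}$; the argument via the correspondence between representations of $\fell$ and of $\cstarfell$ handles this cleanly and simultaneously gives the isometric property.
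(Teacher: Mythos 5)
Your overall architecture --- verify the representation axioms on $\cont_c(\fell)$, identify the integrated form of $\Phi$ with the canonical inclusion via \eqref{383}, deduce nondegeneracy from the density of $\cont_c(\fell)*\cont_c(\fell)$, and transfer everything to $\Psi=\la_\fell\circ\Phi$ --- matches the paper, and those parts are sound. (The paper's only laborious step there is continuity condition (iv) of Definition~\ref{def:RepresentationFellBundle}, which it proves by a compactness/subnet argument in the inductive limit topology; your appeal to joint continuity of the bundle operations plus the compact support of $\xi$ is the same idea, just compressed.)

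The genuine gap is in your treatment of the isometric property. From $\pi\bigl(\Phi_t(b)f\bigr)=\tilde\pi(b)\pi(f)$ with $\pi$ faithful and nondegenerate you correctly get $\|\Phi_t(b)\|=\|\tilde\pi(b)\|$, but you then assert $\|\tilde\pi(b)\|=\|b\|$ with no argument. Only the inequality $\|\tilde\pi(b)\|\leq\|b\|$ is automatic, since $\tilde\pi|_{\fell_e}$ is a \Star{homomorphism} of \cstaralgs{} and $\|\tilde\pi(b)\|^2=\|\tilde\pi(b^*b)\|$. The reverse inequality is equivalent to injectivity of $\tilde\pi$ on the unit fiber $\fell_e$, which is exactly the nontrivial content of the isometric claim and is where the paper does its actual work: it shows that $\Phi(b)=0$ for $b\in\fell_e$ forces $bc=0$ for all $c\in\fell_e$, hence $b=0$, and then uses the identity $\|\Phi(b)\|^2=\|\Phi(b^*b)\|$ to propagate isometry from $\fell_e$ to every fiber. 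To close your version you would need the analogous statement for $\tilde\pi$: if $\tilde\pi(c)=0$ with $c\in\fell_e$, then $\pi(c\cdot f)=0$ for all $f\in\cont_c(\fell)$, hence $c\cdot f=0$ by faithfulness of $\pi$, hence $c=0$. A secondary concern with your route is that the disintegration of a faithful nondegenerate representation of $\cstarfell$ into a representation of $\fell$ is heavier machinery than the statement being proved --- in standard treatments the inverse of the integration map is constructed precisely by means of the multipliers $\Phi_t(b)$ --- so unless you cite an independent disintegration theorem (Fell--Doran), your first step risks circularity. The paper's direct argument avoids both issues.
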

\begin{proof} It is easy to check conditions (i)--(iii) of Definition~\ref{def:RepresentationFellBundle}. To check
(iv), it is enough to show that the map $b\mapsto \Phi(b)\xi$ from $\fell$ into $\cont_c(\fell)$ is
continuous with respect to the inductive limit topology for all $\xi\in \cont_c(\fell)$. Suppose that
$(b_i)$ is a net in $\fell$ converging to some $b\in \fell$. Take $t_i,t\in G$ such that $b_i\in
\fell_{t_i}$ and $b\in \fell_t$. Note that $t_i\to t$ because the bundle projection $\fell\to G$ is continuous.
For each $i$, define the function $f_i\colon G\to \C$ by
$$f_i(s)\defeq \|(\Phi(b_i)\xi)(s)-(\Phi(b)\xi)(s)\|=\|b_i\xi(t_i^{-1}s)-b\xi(t^{-1}s)\|.$$
Note that $f_i$ belongs to $\cont_c(G)$. Since $t_i\to t$, we may assume that the net
$(t_i)$ is contained in a fixed compact subset of $G$. Hence the supports of the functions $f_i$ are all contained in a fixed compact
subset $K_0\sbe G$. Now for each $i$, there is $s_i\in K_0$ such that
$x_i\defeq \sup_{s\in G}f_i(s)=f_i(s_i)$. Passing to a subnet if necessary, we may assume that $(s_i)$ converges
to some $s\in K_0$. It follows that $x_i\to 0$ and hence $\Phi(b_i)\xi\to \Phi(b)\xi$ in the inductive limit topology.
Therefore, $\Phi$ and hence also $\Psi=\la_\fell\circ\Phi$ is a representation of $\fell$. The integrated form of $\Phi$ is given by
$$\Phi(f)\xi|_s=\int_G\Phi\bigl(f(t)\bigr)\xi(s)\dd{t}=\int_Gf(t)\xi(t^{-1}s)\dd{t}=f*\xi$$
for all $f,\xi\in \cont_c(\fell)$, that is, $\Phi$ is the canonical inclusion of $\cstarfell$ into $\MM{\cstarfell}$.
Thus the integrated form of $\Psi=\la_\fell\circ\Phi$ coincides with $\la_\fell$. In particular,
$\Phi$ and $\Psi$ are nondegenerate. Now suppose that
$b\in \fell_e$ and $\Phi(b)=0$. This means that $b\xi(t)=0$ for all
$\xi\in \cont_c(\fell)$ and $t\in G$. In particular, $b\xi(e)=0$ for all
$\xi\in \cont_c(\fell)$, which is equivalent to $bc=0$ for all
$c\in \fell_e$. Thus $b=0$ and hence $\Phi$, restricted to
$\fell_e$, is injective and therefore isometric. As a conclusion,
$$\|\Phi(b)\|^2=\|\Phi(b)^*\Phi(b)\|=\|\Phi(b^*b)\|=\|b^*b\|=\|b\|^2\quad\mbox{for all } b\in \fell$$
This shows that $\Phi$ is isometric. Similarly, one can show that $\Psi$ is isometric.
\end{proof}

\begin{remark} Let $\pi\colon\fell\to \Ls(\hilm)$ be a nondegenerate representation of $\fell$, and let $\tilde\pi$ be its integrated form.
Then $\tilde\pi\circ\Phi=\pi$. In fact, for all $b\in \fell_t$, $f\in \cont_c(\fell)$ and $\xi\in \hilm$, we have
\begin{multline*}
\tilde\pi\bigl(\Phi(b)\bigr)\tilde\pi(f)\xi=\tilde\pi\bigl(\Phi(b)f\bigr)\xi=\int_G\pi\bigl((\Phi(b)f)(s)\bigr)\xi \dd{s}\\
=\int_G\pi\bigl(b_tf(t^{-1}s)\bigr)\xi \dd{s}=\pi(b_t)\int_G\pi\bigl(f(s)\bigr)\xi \dd{s}=\pi(b)\tilde\pi(f)\xi.
\end{multline*}
In particular, if $\tilde\pi$ is isometric, so is $\pi$. Since the regular representation
$\la_\fell$ is not injective in general, Proposition~\ref{prop:embedding of Fell bundles}
shows that the converse is not true.
\end{remark}

\section{Integrable group coactions}

\subsection{Preliminaries on group coactions}

Firstly, let us recall what is a group coaction. Let $G$ be a locally compact group and consider on the
reduced \cstar{algebra} $\csg$ of $G$, the usual \emph{comultiplication} $\com$, that is, the nondegenerate \Star{}homomorphism
$$\com\colon\csg\to\M\bigl(\csg\otimes\csg\bigr)$$ determined by the equation
$\com(\la_t)=\la_t\otimes\la_t$ for all $t\in G$. The fact that $\com$ is a comultiplication
is expressed by the identity $(\com\otimes\id)\circ\com=(\id\otimes\com)\circ\com$.

\begin{definition} Let $\alg$ be a \cstar{algebra}. A \emph{coaction} of $G$ on $\alg$ is a nondegenerate \Star{}homomorphism
$\co{\alg}\colon\alg\to\MM{\alg\otimes\csg}$ satisfying the identity $(\co{\alg}\otimes\id)\circ\co{\alg}=(\id\otimes\com)\circ\co{\alg}$,
and such that $\co{\alg}(\alg)\cdot(1\otimes\csg)$ spans a dense subspace in $\alg\otimes\csg$. A \cstar{algebra} with a coaction of $G$ is
also called a \emph{\dualgcstar{algebra}}.
\end{definition}

Note that the linear span of $\co{\alg}(\alg)\cdot(1\otimes\csg)$ is dense in $\alg\otimes\csg$ if and only the same holds for
$(1\otimes\csg)\cdot\co{\alg}(\alg)$. Hence a coaction $\co{\alg}$ has its image contained in the \cstar{subalgebra}
$\tildeMM{\alg\otimes\csg}\sbe\MM{\alg\otimes\csg}$ defined by
\begin{multline*}
\left\{x\in \MM{\alg\otimes\csg}\colon x\cdot(1\otimes\csg), (1\otimes\csg)\cdot x\sbe \alg\otimes\csg\right\}.
\end{multline*}

Given a coaction $\co{\alg}$ of $G$ on a \cstar{algebra} $\alg$, we can define a (left) action of
the (reduced) Fourier-Stieltjes algebra $\fourst{G}$ of the group $G$ on $\alg$ by:
$$\omega*\xi\defeq (\id\otimes\omega)\bigl(\co{\alg}(\xi)\bigr)\quad\mbox{for all }\omega\in\fourst{G}, \xi\in \alg.$$
Recall that $\fourst{G}$ consists of bounded continuous functions $G\to\C$ of the form $t\mapsto\omega(\la_t)$, where
$\omega\in \csg^*$ is some bounded linear functional on $\csg$. The multiplication on $\fourst{G}$ is, by definition,
the pointwise product of functions. Alternatively, $\fourst{G}$ is isomorphic to the dual $\csg^*$ endowed with the multiplication
$\omega\cdot\theta\defeq (\omega\otimes\theta)\circ\com$ for $\omega,\theta\in \csg^*$. Note that this isomorphism was implicitly used
in the definition of the operation $\omega*\xi$ above. The isomorphism $\fourst{G}\cong\csg^*$
is given in such way that the function $t\mapsto \omega(\la_t)$ in $\fourst{G}$ corresponds to the functional $\omega$ in $\csg^*$.
In particular, $\fourst{G}$ is a Banach algebra. Another important algebra in this context is the Fourier algebra $\fourier{G}$
of $G$ which is the (closed) subalgebra of $\fourst{G}$ generated by the functions $t\mapsto \braket{u}{\la_t(v)}$,
with $u,v\in L^2(G)$ (see \cite{Eymard:Fourier}). In particular, we can restrict the action of
$\fourst{G}$ on $\alg$ defined above to the Fourier subalgebra $\fourier{G}$.
In fact, this action is always nondegenerate for every \dualgcstar{algebra} $\alg$, that is,
the linear span of all elements $\omega*\xi$ with
$\omega\in \fourier{G}$ and $\xi\in \alg$ is dense in $\alg$.

The class of all \dualgcstar{algebras} forms a category. The morphisms are the \Star{}homomorphisms $\pi\colon\alg\to\algb$ which are
\emph{$\dualg$-equivariant}, meaning that $(\pi\otimes\id)\co{\alg}=\co{\algb}\circ\pi$ or, equivalently, $\pi$ respects the
induced actions of $\fourst{G}$, that is, $\pi(\omega*\xi)=\omega*\pi$ for all $\omega\in \fourst{G}$ and $\xi\in \alg$.
In fact, it is enough to require this for $\omega$ in the Fourier subalgebra $\fourier{G}\sbe\fourst{G}$.

\begin{example} \label{exa:Coactions}
{\bf (1)} Any \cstar{algebra} $\alg$ can be equipped with the trivial coaction $\co{\triv}$ defined by
$\co{\triv}(a)\defeq a\otimes 1$ for all $a\in \alg$.

{\bf (2)} The comultiplication $\com\colon\csg\to\MM{\csg\otimes\csg}$ can be seen as a coaction of $G$ on $\csg$.
One can also consider the canonical coaction $\co{G}\colon\csgf\to\MM{\csgf\otimes\csg}$ of $G$ on full \cstar{algebra} $\csgf$
which is determined by the identity $\co{G}(\repu_t)=\repu_t\otimes\la_t$ for all $t\in G$,
where $\repu\colon G\to\MM{\csgf}$ denotes the \emph{universal representation} of $G$ defined
by $\repu_t(f)|_s\defeq f(t^{-1}s)$ for $t,s\in G$ and $f\in C_c(G)\sbe\csgf$.

{\bf (3)} Assume that $G$ is abelian. Then coactions of $G$ correspond bijectively to (strongly continuous) actions of the
Pontrjagin dual $\dualg$ through the canonical identification of $\tildeMM{\alg\otimes\csg}$ with the \cstar{algebra}
$\cont_b(\dualg,\alg)$ of bounded continuous functions of $\dualg$ in $ \alg$.
This example explains the terminology "\dualgcstar{algebras}".

{\bf (4)} A class of important examples are the \emph{dual coactions}. Given a (strongly continuous) action
of $G$ on a \cstar{algebra} $\algb$, there are canonical coactions, both called \emph{dual coactions},
on the full and reduced crossed products $\crossp{G}{\algb}$ and $\crosspr{G}{\algb}$. Roughly speaking, these coactions
act trivially on $\algb$ and through the canonical coactions defined in (2) on $\csgf$ and $\csg$. More generally, we may consider
a Fell bundle $\fell=\{\fell_t\}_{t\in G}$ over $G$, and on its \cstar{}algebras $\cstarfell$ and $\cstarfellr$ there are
natural coactions $\co{\fell}$ and $\co{\fell}^\red$, respectively, also called \emph{dual coactions}.
The coaction $\co{\fell}$ is determined by the equation $\co{\fell}(b_t)=b_t\otimes\la_t$ whenever $b_t\in \fell_t$.
Here we identify the fibers $\fell_t$ as subspaces of $\MM{\cstarfell}$ as in Proposition~\ref{prop:embedding of Fell bundles}.
And the coaction $\co{\fell}^\red$ is determined by the fact that the regular representation $\la_{\fell}\colon\cstarfell\to \cstarfellr$
is $\dualg$-equivariant. See \cite{ExelNg:Approx} for more details on the definition of dual coactions.
We shall describe and study these coactions more precisely in the next sections.
\end{example}

\subsection{Integrable coactions}

Let $G$ be a locally compact group. The pair $(\csg,\com)$ is a \emph{locally compact quantum group} in
the sense of  Kustermans and Vaes \cite{Kustermans-Vaes:LCQG}. The \emph{Haar weight} -- the noncommutative analogue of the Haar measure --
on $(\csg,\com)$ is the Plancherel weight defined in Section~\ref{533}. The Haar weight in this case is both right and left invariant,
that is, $(\csg,\com)$ is \emph{unimodular} as a quantum group.

Coactions of the group $G$ are, by definition, coactions of the quantum group $(\csg,\com)$.
The notion of integrable coactions is in this way a special case of the same concept for locally compact quantum groups
which we developed recently in \cite{Buss-Meyer:Square-integrable}. For reader's convenience, we recall the definition and
some basic facts about integrable coactions.

\begin{definition}
Let $(\alg,\co{\alg})$ be a \dualgcstar{algebra}. A positive element $a\in \M(\alg)^+$ is called \emph{integrable}
(with respect to the coaction $\co{\alg}$) if $\co{\alg}(a)$ is an integrable element in $\M\bigl(\alg\otimes\csg\bigr)$
with respect to the Plancherel weight $\f$ on $\csg$.
In other words, $a$ is integrable if $\co{\alg}(a)$ belongs to $\MM{\alg\otimes\csg}_\ii$.

The set of positive integrable elements in $\M(\alg)^+$ will be denoted by $\M(\alg)^+_\ii$ and the subset of integrable
elements in $\alg^+\sbe\M(\alg)^+$ will be denoted by $\alg_\ii^+$. In other words, $\alg_\ii^+=\M(\alg)_\ii^+\cap \alg$.
We write $\M(\alg)_\ii\defeq \spn{\M(\alg)^+_\ii}$ and $\alg_\ii\defeq \spn{\alg_\ii^+}$. Elements
in $\M(\alg)_\ii$ or in $\alg_\ii$ are also called \emph{integrable}.
We also write $\M(\alg)_\si\defeq \{a\in \M(\alg)\colon a^*a\in \M(\alg)_\ii^+\}$
for the subset of \emph{square-integrable} elements in $\M(\alg)$.
The subset of square-integrable elements in $\alg$ will be denoted by $\alg_\si=\M(\alg)_\si\cap \alg$.

The set of all integrable elements $\M(\alg)_\ii$ is a hereditary \Star{}subalgebra of $\M(\alg)$ whose positive part coincides
with $\M(\alg)_\ii^+$. Moreover, $\M(\alg)_\si$ is a left ideal in $\M(\alg)$ and
$$\M(\alg)_\ii=\spn\M(\alg)_\si^*\M(\alg)_\si.$$
Similar properties hold for $\alg_\ii$ and $\alg_\si$. In general, the subspaces $\M(\alg)_\ii$ and $\M(\alg)_\si$
may be "small". For instance, they may be zero. Integrability means that these subspaces are dense. More precisely,
the \dualgcstar{algebra} $(\alg,\co{\alg})$ is called \emph{integrable} if $\alg_\ii$ is dense in $\alg$ or, equivalently,
if $\alg_\ii^+$ is dense in $\alg^+$, or also if $\alg_\si$ is dense in $\alg$. It is also equivalent to
require that the subsets $\M(\alg)_\ii^+\sbe\M(\alg)^+$, $\M(\alg)_\ii\sbe\M(\alg)$ or $\M(\alg)_\si\sbe\M(\alg)$
are dense in $\M(\alg)$ with respect to the strict topology (see \cite[Lemma 4.12]{Buss-Meyer:Square-integrable} for more details).
\end{definition}

The map $\Av\colon\M(\alg)_\ii\to\M(\alg)$ defined by $\Av(a)\defeq (\id_\alg\otimes\f)(\co{\alg}(a))$ is called the \emph{averaging map}.
Lemma 4.10 in \cite{Buss-Meyer:Square-integrable} shows that the image of $\Av$ is contained in the fixed point algebra:

$$\M_1(\alg)\defeq \{x\in \M(\alg)\colon\co{\alg}(x)=x\otimes 1\}.$$

\begin{example}\label{exa:IntegrableCoactions}
{\bf (1)} If $G$ is discrete (or, equivalent, if the quantum group $(\csg,\com)$ is compact), then
the Plancherel weight is bounded, and hence every coaction is integrable. Otherwise, there are always non-integrable coactions.
For example, the trivial coaction on a non-zero \cstar{algebra} is integrable if and only if $G$ is discrete.
The same is true for coactions on unital \cstar{algebras}
because in this case there is no dense (left or right) ideal.

{\bf (2)} If $G$ is abelian, then integrable coactions of $G$ correspond to integrable actions
-- as defined by Rieffel and Exel in \cite{Exel:Unconditional,Exel:Spectral,Rieffel:Integrable_proper} -- of the dual group $\dualg$
through the usual correspondence between coactions of $G$ and actions of $\dualg$ as in Example~\ref{exa:Coactions}(3).
Moreover, if a \cstar{algebra} $\alg$ has a coaction of $G$ that corresponds to an action $\alpha$ of $\dualg$,
then an element $a\in \M(\alg)^+$ is integrable if and only if the strict unconditional integral
$\int_{\dualg}^\su\alpha_x(a)\dd{x}$ converges. In this case, this integral is equal to $\Av(a)$.
See comments after Definition~\ref{367}.

{\bf (3)} One of the most simple non-trivial examples of integrable coactions is the coaction $\com$ of $G$ on $\csg$, that is,
the comultiplication of $\csg$ itself. The canonical coaction $\co{G}$ of $G$ on its full \cstar{algebra}
$\csgf$ (see Example~\ref{exa:Coactions}(2)) is also integrable. More generally, every dual coaction
(see Example~\ref{exa:Coactions}(4)) on the full $\crossp{G}{\algb}$ or reduced $\crosspr{G}{\algb}$ crossed product is integrable,
for every \cstar{dynamical system} $(\algb,\beta,G)$. In fact, it is possible to generalize this result to quantum groups, see
\cite[Proposition~4.20]{Buss-Meyer:Square-integrable}.

{\bf (4)} As a special case of (3), we get that the dual coaction on $\K(L^2(G))\cong\crossp{G}{\cont_0(G)}\cong\crosspr{G}{\cont_0(G)}$
is integrable, where $\cont_0(G)$ is endowed with the translation action of $G$.
More generally, given any \dualgcstar{algebra} $\alg$, there is a canonical coaction on $\alg\otimes\K(L^2(G))$,
called the \emph{stable coaction}, which also integrable. This result can be also generalized to quantum groups,
see \cite[Proposition 4.22]{Buss-Meyer:Square-integrable}. Hence, up to stabilization, every coaction is integrable.
In particular, every coaction is Morita equivalent to an integrable coaction.
\end{example}

To prove that the comultiplication $\com$ on $\csg$ is integrable as a coaction of $G$, one simply uses that the
Plancherel weight is left invariant, see \cite[Example 4.4]{Buss-Meyer:Square-integrable}. And a similar argument also shows
that the coaction $\co{G}$ of $G$ on $\csgf$ is integrable. These facts together with the following permanence property
for integrable coactions \cite[Proposition 4.14]{Buss-Meyer:Square-integrable} show that every classical dual coaction is integrable:

\begin{proposition}\label{prop:PermanencePropertiesIntegrableCoactions}
Suppose that $\alg$ and $\algb$ are \dualgcstar{algebras} and that $\pi\colon\alg\to\M(\algb)$
is a nondegenerate $\dualg$-equivariant \Star{}homomorphism. Then $\algb$ is integrable whenever $\alg$ is.
Moreover, if $a\in\M(\alg)_\ii$, then $\pi(a)\in \M(\algb)_\ii$ and $\Av(\pi(a))=\Av(a)$.
\end{proposition}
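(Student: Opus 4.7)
The plan is to reduce everything to the characterization of integrable elements given in Proposition~\ref{prop:chacterization of integrable elements}, combined with the strict extension of the equivariance identity, namely $\co{\algb}\circ\bar\pi=(\bar\pi\otimes\id_{\csg})\circ\co{\alg}$ on $\M(\alg)$ (obtained by extending $(\pi\otimes\id_{\csg})\circ\co{\alg}=\co{\algb}\circ\pi$ strictly, using nondegeneracy of $\pi$). I would first prove the pointwise formula on a positive integrable multiplier, and then deduce that $\algb_\si$ is dense in $\algb$ by an ideal argument.

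So let $a\in\M(\alg)_\ii^+$ and set $c\defeq \Av(a)\in\M(\alg)$. Pick $\psi\in\algb^*_+$ and observe that its strict extension $\bar\psi$ composed with $\bar\pi$ restricts to a positive functional $\theta\defeq \bar\psi\circ\pi\in\alg^*_+$ whose strict extension agrees with $\bar\psi\circ\bar\pi$ on $\M(\alg)$. Using the equivariance identity and the naturality of slice maps, one obtains
\[
(\bar\psi\otimes\id_{\csg})\bigl(\co{\algb}(\pi(a))\bigr)
=(\bar\psi\otimes\id_{\csg})\bigl((\bar\pi\otimes\id_{\csg})(\co{\alg}(a))\bigr)
=(\theta\otimes\id_{\csg})(\co{\alg}(a)).
\]
The forward direction of Proposition~\ref{prop:chacterization of integrable elements} applied to $\co{\alg}(a)$ says that the right-hand side is integrable in $\M(\csg)$ with Plancherel value $\theta(c)=\bar\psi(\bar\pi(c))$. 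Since $\psi$ was arbitrary, the converse direction of the same proposition, now applied to $\co{\algb}(\pi(a))$, yields $\pi(a)\in\M(\algb)_\ii^+$ together with $\Av(\pi(a))=\bar\pi(\Av(a))$, which is the formula asserted in the proposition (under the tacit identification of $\Av(a)\in\M(\alg)$ with its image $\bar\pi(\Av(a))\in\M(\algb)$).

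By linearity this conclusion extends to $\M(\alg)_\ii$, and the identity $\pi(x)^*\pi(x)=\pi(x^*x)$ then gives $\pi\bigl(\M(\alg)_\si\bigr)\sbe\M(\algb)_\si$; in particular $\pi(\alg_\si)\sbe\M(\algb)_\si$. Assuming $\alg$ is integrable, $\alg_\si$ is dense in $\alg$, so by nondegeneracy of $\pi$ the product $\pi(\alg_\si)\cdot\algb$ is dense in $\pi(\alg)\cdot\algb$ and hence in $\algb$. Since $\M(\algb)_\si$ is a left ideal in $\M(\algb)$,
\[
\pi(\alg_\si)\cdot\algb\sbe\M(\algb)_\si\cdot\algb\sbe\M(\algb)_\si\cap\algb=\algb_\si,
\]
so $\algb_\si$ is dense in $\algb$, i.e.\ $\algb$ is integrable.

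I expect the only real technical annoyance to be the bookkeeping of strict extensions: one must verify that $\bar\psi\circ\bar\pi$ really is the strict extension of $\bar\psi\circ\pi|_\alg\in\alg^*_+$, and that the tensor-extended map $\bar\pi\otimes\id_{\csg}$ commutes with the slice map $\bar\psi\otimes\id_{\csg}$ as applied to $\co{\alg}(a)\in\M(\alg\otimes\csg)$. These compatibilities all come from uniqueness of strict extensions and from strict continuity of slice maps on bounded sets, but they have to be cited carefully; once they are in place, the argument is a direct application of the integrability criterion.
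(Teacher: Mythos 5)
The paper itself does not prove this proposition; it imports it verbatim from \cite{Buss-Meyer:Square-integrable}*{Proposition 4.14}, so your argument can only be judged on its own merits. Your treatment of the pointwise claim is correct and is the natural one: extend the equivariance identity strictly, test $\co{\algb}(\pi(a))$ against $\psi\in\algb^*_+$ via $\theta\defeq\bar\psi\circ\pi$, and apply Proposition~\ref{prop:chacterization of integrable elements} in both directions to get $\pi(a)\in\M(\algb)_\ii^+$ and $\Av(\pi(a))=\bar\pi(\Av(a))$. The strict-extension compatibilities you flag are indeed the only technical content there, and your reading of the equality $\Av(\pi(a))=\Av(a)$ as an identification along $\bar\pi$ is the intended one.

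The density argument, however, contains a step that fails as written. You invoke the left-ideal property of $\M(\algb)_\si$ to justify $\M(\algb)_\si\cdot\algb\sbe\M(\algb)_\si$, but a left ideal absorbs multiplication on the \emph{left}: for $y\in\M(\algb)$ and $x\in\M(\algb)_\si$ one has $(yx)^*(yx)=x^*y^*yx\leq\|y\|^2\,x^*x$, so $yx\in\M(\algb)_\si$ by heredity of $\M(\algb)_\ii^+$; the analogous computation for $xy$ produces $y^*(x^*x)y$, which is not dominated by a multiple of $x^*x$, and $\M(\algb)_\si$ is not a right ideal in general (that would essentially require the composite $(\id_\algb\otimes\f)\circ\co{\algb}$ to be tracial). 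So the set $\pi(\alg_\si)\cdot\algb$ you exhibit is dense but is not known to lie in $\algb_\si$. The repair is immediate: replace it by $\algb\cdot\pi(\alg_\si)$. Nondegeneracy gives $\cspn\bigl(\pi(\alg)\algb\bigr)=\algb$, hence, taking adjoints, $\cspn\bigl(\algb\,\pi(\alg)\bigr)=\algb$, and by continuity $\cspn\bigl(\algb\,\pi(\alg_\si)\bigr)=\algb$ as well; on the other hand
\[
\algb\cdot\pi(\alg_\si)\sbe\algb\cdot\M(\algb)_\si\sbe\M(\algb)_\si\cap\algb=\algb_\si,
\]
now using the left-ideal property correctly together with the fact that $\algb$ is an ideal in $\M(\algb)$. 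Since $\algb_\si$ is a linear subspace, this shows it is dense, and the rest of your argument stands.
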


One can replace the homomorphism $\pi$ in the above proposition by any
linear positive, nondegenerate, strictly continuous and $\dualg$-equivariant map $\M(\alg)\to\M(\algb)$.

Using the above proposition, it is simple to show that dual coactions on crossed products are integrable. In fact,
given any action $(\algb,\beta,G)$, there are canonical equivariant homomorphisms
$\csgf\to\MM{\crossp{G}{\algb}}$ and $\csg\to\MM{\crosspr{G}{\algb}}$.
This argument does not apply to dual coactions on the \cstar{algebras} $C^*_{(\red)}(\fell)$
of a Fell bundle $\fell=\{\fell_t\}_{t\in G}$ because there is no nondegenerate $\dualg$-equivariant homomorphisms
$C^*_{(\red)}(G)\to \MM{C^*_{(\red)}(\fell)}$. In fact, Lansdstad's Duality Theorem \cite{KaliszewskiQuigg:Landstad,Landstad:Duality}
shows that the existence of such homomorphisms is a peculiarity of classical dual coactions on crossed products.

One of the main goals in this work is to show that dual coactions on \dualgcstar{}algebras of Fell bundles are integrable.
We shall prove this using the definition of integrability rather of any other property.

\subsection{Integrability of dual coactions}
This section contains the main result of this work, namely, the fact that
dual coactions on full and reduced \cstaralgs{} of a Fell bundle are integrable.

Let $G$ be a locally compact group, let $\fell=\{\fell_t\}_{t\in G}$ be a Fell bundle over $G$ and
let $\alg\defeq \cstarfell$ be the full \cstar{algebra} of $\fell$. We identify
each $b_s\in \fell_s$ as an element of $\M(\alg)$ through the map $\Phi$ defined in Proposition~\ref{prop:embedding of Fell bundles}.
Thus $b_s\in \fell_s$ will be identified with the multiplier of $\alg=\cstarfell$ given by
$(b_s\cdot \xi)(t)\defeq b_s\cdot \xi(s^{-1}t)$ for all $\xi\in \cont_c(\fell)$ and $s,t\in G$.
With this identification, each section $\xi\in \cont_c(\fell)$ may be viewed as an element of $\cont_c\bigl(G,\M^{\st}(\alg)\bigr)$,
the space of compactly supported strictly continuous functions $G\to\M(\alg)$.
Since the integrated form of $\Phi\colon\fell\to \M(\alg)$ coincides with the inclusion
$\alg\hookrightarrow \M(\alg)$, Equation~\eqref{383} yields
\begin{equation}\label{eq:SectionAsIntegral}
\xi=\int_G^\st\!\!\Phi\bigl(\xi(s)\bigr)\,\dd{s}=\int_G^\st\!\!\xi(s)\,\dd{s},\quad \xi\in \cont_c(\fell),
\end{equation}
where the superscript "s" indicates a strict integral.
The same equation holds for $\xi\in L^1(\fell)$, but we only need it for $\xi\in \cont_c(\fell)$.

There is a \emph{dual coaction} of $G$ on $\alg$
denoted by $\co{\fell}\colon\alg\to \M(\alg\otimes \csg)$ and characterized by
$\co{\fell}(b_s)= b_s\otimes\la_s$ for all $b_s\in \fell_s$ (see \cite{ExelNg:Approx} for details).
It follows from Equation~\eqref{eq:SectionAsIntegral} that
\begin{equation}\label{139}
\co{\fell}(\xi)=\int_G^\st\!\!\xi(s)\otimes\la_s \dd{s},\quad \xi\in \cont_c(\fell)
\end{equation}
Comparing the above formula with Equation~\eqref{eq:regular representation}, we get the following formula for the dual coaction:
\begin{equation}\label{eq:FormulaDualCoaction}
\co{\fell}(\xi)=\la_\alg(\xi)\quad\mbox{for all }\xi\in \cont_c(\fell).
\end{equation}

As an immediate application of the formula above, we describe the $\fourst{G}$-action induced by the dual coaction.
Recall that $\fourst{G}$ denotes the Fourier-Stietjes algebra of $G$.
Given $u\in \fourst{G}\cong \csg^*$ and $\xi\in \cont_c(\fell)$, Equation~\eqref{eq:SectionAsIntegral} yields
\begin{equation}\label{223}
u*\xi=(\id\otimes u)\bigl(\co{\fell}(\xi)\bigr)
=(\id\otimes u)\left(\int_G^\st\xi(s)\otimes\la_s \dd{s}\right)=\int_G^\st\xi(s)u(s)\dd{s}=u\cdot\xi.
\end{equation}
Thus, the action of $\fourst{G}$ on $\alg$ induced by the dual coaction $\co{\fell}$,
when restricted to $\cont_c(\fell)$, is simply pointwise multiplication.

As a second, more important application of Equation~\eqref{eq:FormulaDualCoaction},
we show that the dual coaction $\co{\fell}$ is integrable:

\begin{theorem}\label{the:DualCoactionIntegrable} Let $\fell$ be a Fell bundle over $G$.
Then the \cstar{algebra} $\cstarfell$ endowed with the
dual coaction $\co{\fell}$ is an integrable \dualgcstar{algebra}.
Moreover, each element in $\cont_c(\fell)$ is square-integrable, that is,
each element $\xi$ of the dense $*$-subalgebra
$\cont_c(\fell)^2\defeq \spn\bigl\{\eta*\zeta\colon\eta,\zeta\in \cont_c(\fell)\bigr\}$ is integrable, and
we have
\begin{equation}\label{eq:AverageMapDualCoaction}
\Av(\xi)=\xi(e).
\end{equation}
\end{theorem}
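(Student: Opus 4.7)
The plan is to exploit Equation~\eqref{eq:FormulaDualCoaction}, which says $\co{\fell}(\xi)=\la_\alg(\xi)$ for $\xi\in\cont_c(\fell)$, in order to reduce the theorem to a direct application of Proposition~\ref{prop:positive-definite implica integravel}. Fix $\xi\in\cont_c(\fell)$ and view it as a compactly supported strictly continuous function $G\to\M(\alg)$ via the embedding $\Phi$ of Proposition~\ref{prop:embedding of Fell bundles}; axiom~(iv) of Definition~\ref{def:RepresentationFellBundle} ensures that this composition really lies in $\cont_c(G,\M^\st(\alg))$. Then $\xi^**\xi\in\cont_c(\fell)$ as well, and applying the $\Star{}$homomorphism $\co{\fell}$ gives
\[
\co{\fell}(\xi^**\xi)=\co{\fell}(\xi)^*\co{\fell}(\xi)=\la_\alg(\xi)^*\la_\alg(\xi)=\la_\alg(\xi^**\xi),
\]
which is manifestly a positive operator in $\M(\alg\otimes\csg)\sbe\Ls(L^2(G,\alg))$.

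Since $\la_\alg(\xi^**\xi)\geq 0$, Proposition~\ref{prop:positive-definite implica integravel} yields at once that $\co{\fell}(\xi^**\xi)$ is integrable with respect to the Plancherel weight on $\csg$, with $(\id_\alg\otimes\f)\bigl(\co{\fell}(\xi^**\xi)\bigr)=(\xi^**\xi)(e)$. By the definition of integrability for coactions, this means exactly that $\xi^**\xi\in\M(\alg)_\ii$, equivalently $\xi\in\alg_\si$, with $\Av(\xi^**\xi)=(\xi^**\xi)(e)$. To cover an arbitrary element of $\cont_c(\fell)^2$, I would invoke the polarization identity
\[
\eta*\zeta=(\eta^*)^**\zeta=\frac{1}{4}\sum_{k=0}^3 i^k\,(\eta^*+i^k\zeta)^**(\eta^*+i^k\zeta),
\]
which expresses every generator as a linear combination of terms $\chi^**\chi$ with $\chi\in\cont_c(\fell)$, hence of integrable elements. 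Since $\Av$ and evaluation at $e$ are both linear, the identity $\Av(\xi)=\xi(e)$ extends immediately to all of $\cont_c(\fell)^2$.

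Density of $\cont_c(\fell)^2$ in $\cstarfell$, and hence integrability of the whole \dualgcstar{algebra}, follows from a standard approximate-identity argument: choosing an approximate identity $(e_j)$ of $\cstarfell$ inside $\cont_c(\fell)$, one has $\xi*e_j\in\cont_c(\fell)^2$ converging to $\xi$ in norm for every $\xi\in\cont_c(\fell)$, and $\cont_c(\fell)$ is already dense in $\cstarfell$. The conceptual content of the proof has been absorbed entirely into Proposition~\ref{prop:positive-definite implica integravel} --- essentially a generalized Fourier inversion theorem combined with the characterization of positive operators of the form $\la_\alg(f)$ --- so I do not expect a serious obstacle. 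The only step worth double-checking is the equality $\co{\fell}(\xi^**\xi)=\la_\alg(\xi^**\xi)$ as elements of $\M(\alg\otimes\csg)$ (as opposed to merely inside some larger operator algebra), but this is automatic from Equation~\eqref{eq:FormulaDualCoaction} and the fact that $\co{\fell}$ is a $\Star{}$homomorphism.
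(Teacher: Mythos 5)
Your proof is correct and follows essentially the same route as the paper: reduce by polarization to elements of the form $\eta^**\eta$, use Equation~\eqref{eq:FormulaDualCoaction} to identify $\co{\fell}(\eta^**\eta)$ with the positive operator $\la_\alg(\eta)^*\la_\alg(\eta)$, and apply Proposition~\ref{prop:positive-definite implica integravel} to get integrability and $\Av(\xi)=\xi(e)$. Your explicit treatment of the density of $\cont_c(\fell)^2$ via an approximate identity is a detail the paper leaves implicit, and is welcome.
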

\begin{proof} Let $\alg\defeq \cstarfell$. By polarization, it is enough to show that
$\xi=\eta^**\eta$ is integrable for all $\eta\in \cont_c(\fell)$.
Identifying $\cont_c(\fell)\sbe \cont_c\bigl(G,\M^\st(\alg)\bigr)$ as above,
we have $\co{\fell}(\xi)=\la_\alg(\xi)$ by Equation~\eqref{eq:FormulaDualCoaction}.
Since $\la_\alg(\xi)=\la_\alg(\eta)^*\la_\alg(\eta)\geq 0$, Proposition~\ref{prop:positive-definite implica integravel}
implies that $\co{\fell}(\xi)$ belongs to $\MM{\alg\otimes\csg}_\ii$, that is, $\xi\in \alg_\ii$, and
\begin{equation*}
\Av(\xi)=(\id_\alg\otimes\f)\bigl(\co{\fell}(\xi)\bigr)=(\id_\alg\otimes\f)\bigl(\la_\alg(\xi)\bigr)=\xi(e).
\end{equation*}
\vskip-14pt
\end{proof}

Now we direct our attention to the reduced \cstar{algebra} $\cstarfellr$ of a Fell bundle $\fell$.
Let us start recalling that the left regular representation of $\fell$ is defined by
$$\la_\fell\colon\fell\to\Ls\bigl(L^2(\fell)\bigr),\quad \la_\fell(b_t)\xi(s)\defeq b_t\xi(t^{-1}s)\quad\mbox{for all }
b_t\in \fell_t\mbox{ and }\,s,t\in G,$$
where $L^2(\fell)$ denotes the Hilbert $\fell_e$-module defined as the completion of $\cont_c(\fell)$
with the obvious right $\fell_e$-action and the $\fell_e$-inner product
$$\<\xi,\eta\>_{\fell_e}\defeq \int_G\xi(t)^*\eta(t)\dd{t},\quad \xi,\eta\in \cont_c(\fell).$$
By definition,
$$\cstarfellr\defeq \la_\fell\bigl(\cstarfell\bigr)\sbe \Ls\bigl(L^2(\fell)\bigr).$$
There is a coaction $\co{\fell}^\red$ of $G$ on $\cstarfellr$, also called \emph{dual coaction},
determined by the fact that $\la_\fell\colon\cstarfell\to \cstarfellr$ is equivariant
(see \cite{ExelNg:Approx} for details). Moreover, $\co{\fell}^\red$ satisfies
$$\co{\fell}^\red(x)=W_\fell(x\otimes 1)  W_\fell^*,$$
where $W_\fell$ is the unitary operator in $\Ls\left(L^2(\fell)\otimes L^2(G)\right)$ defined by $W_\fell\xi(s,t)=\xi(s,s^{-1}t)$ for all
$\xi\in \cont_c(\fell\times G)$ and $s,t\in G$. Here we identify $L^2(\fell)\otimes L^2(G)\cong L^2(\fell\times G)$, where
$\fell\times G$ denotes the pullback of $\fell$ along the projection $G\times G\ni (s,t)\mapsto s\in G$. In particular,
this implies that $\co{\fell}^\red$ is injective.

The equivariance of $\la_\fell$ yields
$$\co{\fell}^\red\bigl(\la_\fell(b_t)\bigr)=(\la_\fell\otimes\id)\bigl(\co{\fell}(b_t)\bigr)=(\la_\fell\otimes\id)(b_t\otimes\la_t)=
\la_\fell(b_t)\otimes\la_t\quad\mbox{for all } b_t\in \fell_t,$$
and hence
$$\co{\fell}^\red\bigl(\la_\fell(\xi)\bigr)=\int_G^\st\la_\fell\bigl(\xi(t)\bigr)\otimes \la_t\dd{t},\quad \xi\in \cont_c(\fell).$$

Using the regular representation $\la_\fell\colon\cstarfell\to\cstarfellr$,
we can carry over our results from $\cstarfell$ to $\cstarfellr$:

\begin{corollary}\label{cor:RedDualCoactionIntegrable}
Let $\fell$ be a Fell bundle over $G$. Then the \cstar{algebra} $\cstarfellr$ endowed with the dual coaction
$\co{\fell}^\red$ of $G$ is an integrable \dualgcstar{algebra}. Moreover, every element in
$\la_\fell\bigl(\cont_c(\fell)\bigr)$ is square-integrable, that is,
every element $\la_\fell(\xi)$ in $\la_\fell\bigl(\cont_c(\fell)^2\bigr)$ is integrable and
$$\Av\bigl(\la_\fell(\xi)\bigr)=\la_\fell\bigl(\xi(e)\bigr).$$
\end{corollary}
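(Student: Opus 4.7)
The plan is to deduce the corollary directly from Theorem~\ref{the:DualCoactionIntegrable} by pushing forward along the regular representation via Proposition~\ref{prop:PermanencePropertiesIntegrableCoactions}. Concretely, the map $\la_\fell\colon\cstarfell\to\cstarfellr$ is surjective by the very definition of $\cstarfellr$, hence nondegenerate, and it is $\dualg$-equivariant by the way $\co{\fell}^\red$ is defined. So the hypotheses of the permanence proposition are met, and integrability of $(\cstarfellr,\co{\fell}^\red)$ will follow at once from integrability of $(\cstarfell,\co{\fell})$, which was established in Theorem~\ref{the:DualCoactionIntegrable}.

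For the explicit formula $\Av(\la_\fell(\xi))=\la_\fell(\xi(e))$ with $\xi\in\cont_c(\fell)^2$, I would argue as follows. Theorem~\ref{the:DualCoactionIntegrable} gives that $\xi$ is integrable in $\cstarfell$ with $\Av(\xi)=\xi(e)\in\fell_e\sbe\M(\cstarfell)$. Proposition~\ref{prop:PermanencePropertiesIntegrableCoactions} then yields that $\la_\fell(\xi)$ is integrable in $\cstarfellr$ and that $\Av(\la_\fell(\xi))$ equals the image of $\Av(\xi)$ under the strictly continuous extension of $\la_\fell$ to multipliers, which is precisely $\la_\fell(\xi(e))$ once one identifies $\fell_e\sbe\M(\cstarfellr)$ via the embedding $\Psi=\la_\fell\circ\Phi$ of Proposition~\ref{prop:embedding of Fell bundles}.

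For the square-integrability assertion, it suffices to observe that for each $\eta\in\cont_c(\fell)$ the element $\eta^**\eta$ lies in $\cont_c(\fell)^2$ and is therefore integrable in $\cstarfell$ by Theorem~\ref{the:DualCoactionIntegrable}. Applying the transfer principle again gives $\la_\fell(\eta)^*\la_\fell(\eta)=\la_\fell(\eta^**\eta)\in(\cstarfellr)_\ii$, which is exactly square-integrability of $\la_\fell(\eta)$. The only point requiring a moment of care is the bookkeeping between the two identifications of $\fell_e$ inside $\M(\cstarfell)$ and $\M(\cstarfellr)$; this is handled automatically by the compatibility $\Psi=\la_\fell\circ\Phi$ built into Proposition~\ref{prop:embedding of Fell bundles}, so no real obstacle is expected.
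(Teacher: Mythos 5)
Your proposal is correct and follows exactly the paper's route: the paper's proof is the one-line observation that the corollary follows from Theorem~\ref{the:DualCoactionIntegrable} together with Proposition~\ref{prop:PermanencePropertiesIntegrableCoactions}, applied to the nondegenerate equivariant homomorphism $\la_\fell$. Your additional bookkeeping (reading $\Av(\pi(a))=\Av(a)$ as equality after extending $\la_\fell$ to multipliers, and the compatibility $\Psi=\la_\fell\circ\Phi$) is the correct way to make the paper's terse statement precise.
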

\begin{proof} The result follows from Theorem~\ref{the:DualCoactionIntegrable} and Proposition~\ref{prop:PermanencePropertiesIntegrableCoactions}.
\end{proof}

\subsection{The Fourier transform}

\begin{definition}\label{FourierCoef} Let $(\alg,\co{\alg})$ be a \dualgcstar{algebra}. For $a\in \M(\alg)_\ii$,
we define the \emph{Fourier coefficient} of $a$ at $t\in G$ by
$$E_t(a)\defeq (\id_\alg\otimes\f)\bigl((1\otimes\la_t^{-1})\co{\alg}(a)\bigr).$$
The map $t\mapsto E_t(a)$ from $G$ into $\M(\alg)$ is called the \emph{Fourier transform} of $a$.
\end{definition}

In other words, the Fourier transform of an integrable element $a\in\M(\alg)_\ii$
is the Fourier transform of the integrable element $\co{\alg}(a)\in \MM{\alg\otimes\csg}_\ii$ as in Definition~\ref{FourierCoefficient},
that is, $E_t(a)=\widehat{\co{\alg}(a)}(t)$ for all $t\in G$. Note also that
$$E_e(a)=(\id_{\alg}\otimes\f)(\co{\alg}(a))=\Av(a).$$

If the group $G$ is abelian and if we identify $\csg\cong \cont_0(\dualg)$ in the usual way (through the Fourier transform), then
$(1_\alg\otimes\la_{t^{-1}})(f)(x)=\braket{x}{t}f(x)$ for all $f\in \cont_b\bigl(\dualg,\M^{\st}(\alg)\bigr)\cong \M(\alg\otimes \csg)$
and $x\in \dualg$, where we write $\braket{x}{t}\defeq x(t)$ to emphasize the duality between $G$ and $\dualg$.
Thus if $\co{\alg}$ corresponds to an action $\alpha$ of $\dualg$ on $\alg$,
then the Fourier coefficient coincides with that one defined by Exel in \cite{Exel:Unconditional,Exel:Spectral}:
$$E_t(a)=\int_{\dualg}^\su\langle x,t\rangle\alpha_x(a)\dd{x},\quad a\in \M(\alg)_\ii.$$

\begin{proposition}\label{prop:Fourier coef in espaco spectral} Let $\alg$ be a \dualgcstar{algebra}
and let $a\in \M(\alg)_\ii$ be an integrable element.
Then the Fourier coefficient $E_t(a)$ belongs to the $t$-spectral subspace
$\M_t(\alg)$ of $\M(\alg)$ defined by
$$\M_t(\alg)\defeq \{b\in \M(\alg)\colon\co{\alg}(b)=b\otimes \la_t\}.$$
\end{proposition}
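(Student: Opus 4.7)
The plan is to prove $\co{\alg}(E_t(a)) = E_t(a) \otimes \la_t$ by direct computation, using three ingredients: (i) the compatibility of nondegenerate $*$-homomorphisms with slicing by a weight, (ii) the coaction identity $(\co{\alg}\otimes\id)\co{\alg} = (\id\otimes\com)\co{\alg}$, and (iii) the left invariance of the Plancherel weight combined with the group-like relation $\com(\la_t^{-1}) = \la_t^{-1}\otimes \la_t^{-1}$.

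First, I would apply $\co{\alg}$ to $E_t(a) = (\id_\alg\otimes\f)\bigl((1\otimes\la_t^{-1})\co{\alg}(a)\bigr)$. The standard Kustermans--Vaes result that nondegenerate $*$-homomorphisms commute with the slice map $\id\otimes\f$ allows me to pull $\co{\alg}$ inside. Then commuting $\co{\alg}\otimes\id_\csg$ past $1\otimes\la_t^{-1}$ and invoking the coaction identity yields
$$\co{\alg}(E_t(a)) = (\id_\alg\otimes\id_\csg\otimes\f)\bigl((1\otimes 1\otimes\la_t^{-1})\,(\id_\alg\otimes\com)\co{\alg}(a)\bigr).$$

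Next I would establish the key auxiliary identity
$$(\id_\csg\otimes\f)\bigl((1\otimes\la_t^{-1})\com(y)\bigr) = \f(\la_t^{-1}y)\,\la_t$$
for $y\in\csg_\ii$. This is a consequence of the left invariance $(\id\otimes\f)\com(z) = \f(z)\,1$ of the Plancherel weight applied to $z = \la_t^{-1}y$, whose integrability follows from the analyticity of $\la_t^{-1}$ with respect to the modular group (Equation~\eqref{564}) together with Proposition~3.28 of \cite{Kustermans-Vaes:Weight}: one expands $\com(\la_t^{-1}y) = (\la_t^{-1}\otimes\la_t^{-1})\com(y)$ and multiplies the resulting equality by $\la_t$ on the left. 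Tensoring this identity with $\id_\alg$ and inserting it into the previous display -- using Proposition~\ref{prop:chacterization of integrable elements} to handle the Fubini-type interchange of slice maps -- collapses the right-hand side to
$$(\id_\alg\otimes\f)\bigl((1\otimes\la_t^{-1})\co{\alg}(a)\bigr)\otimes\la_t = E_t(a)\otimes\la_t,$$
which is the desired equality.

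The main obstacle I anticipate is the rigorous promotion of the auxiliary identity, proved initially for $y\in\csg_\ii$, to the multiplier-algebra slicing required when one applies $\id_\alg\otimes\id_\csg\otimes\f$ to $(1\otimes 1\otimes\la_t^{-1})(\id_\alg\otimes\com)\co{\alg}(a)$. This is not a conceptual difficulty but requires careful bookkeeping: one approximates $\f$ by the directed net $(\omega)_{\omega\in\G_\f}$ of bounded positive functionals from Equation~\eqref{eq:NetApproxLowerSemiContinuousWeights}, verifies the identity for each $\omega$ where it reduces to the scalar-valued computation already performed, and then passes to the strict-topology limit using the machinery developed in Section~\ref{sec:WeightSlices}.
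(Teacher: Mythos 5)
Your proposal is correct and follows essentially the same route as the paper's proof: commute $\co{\alg}$ with the slice map $\id_\alg\otimes\f$, rewrite $(\co{\alg}\otimes\id)\bigl((1\otimes\la_t^{-1})\co{\alg}(a)\bigr)$ via the coaction identity and $\com(\la_t)=\la_t\otimes\la_t$, and conclude by left invariance of the Plancherel weight. The only difference is bookkeeping: the paper peels off the factor $1\otimes\la_t\otimes 1$ before slicing and then directly cites the generalized invariance $(\id\otimes\id\otimes\f)\circ(\id\otimes\com)=(\id\otimes\f)(\cdot)\otimes 1$ from \cite{Kustermans-Vaes:LCQGvN}, whereas you re-derive that invariance at the multiplier level by approximating $\f$ with the net $\G_\f$.
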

\begin{proof} Since the comultiplication $\com$ of $\csg$ satisfies $\com(\la_t)=\la_t\otimes\la_t$, the equation
$(\co{\alg}\otimes\id)\circ\co{\alg}=(\id_\alg\otimes\com)\circ\co{\alg}$ yields
$$(\co{\alg}\otimes\id)\bigl((1_\alg\otimes \la_t^{-1})\co{\alg}(a)\bigr)
		=(1_\alg\otimes \la_t\otimes 1)(\id_\alg\otimes\com)\bigl((1_\alg\otimes\la_t^{-1})\co{\alg}(a)\bigr).$$
Now, using the invariance of the weight $\f$ with respect to the comultiplication $\com$
in its generalized form as in \cite[Proposition 3.1]{Kustermans-Vaes:LCQGvN},
we get the desired result:
\begin{align*}
\co{\alg}\bigl(E_t(a)\bigr)&
			  =\co{\alg}\Bigl((\id_\alg\otimes\f)\bigl((1_\alg\otimes\la_t^{-1})\co{\alg}(a)\bigl)\Bigr)
	\\&=(\id_\alg\otimes\id\otimes\f)\Bigl((\co{\alg}\otimes\id)\bigl((1_\alg\otimes \la_t^{-1})\co{\alg}(a)\bigr)\Bigr)
	\\&=(\id_\alg\otimes\id\otimes\f)\Bigl((1_\alg\otimes \la_t\otimes 1)(\id_\alg\otimes\com)
                                                                                    \bigl((1_\alg\otimes\la_t^{-1})\co{\alg}(a)\bigr)\Bigr)
	\\&=(1_\alg\otimes \la_t)(\id_\alg\otimes\id\otimes\f)\Bigl((\id_\alg\otimes\com)\bigl((1_\alg\otimes\la_t^{-1})\co{\alg}(a)\bigr)\Bigr)
	\\&=(1_\alg\otimes \la_t)\Bigl((\id_\alg\otimes\f)\bigl((1_\alg\otimes\la_t^{-1})\co{\alg}(a)\bigr)\otimes 1\Bigr)
	\\&=E_t(a)\otimes\la_t.
\end{align*}
\vskip-14pt
\end{proof}

\par Let $e\in G$ be the identity element. Note that the $e$-spectral subspace $\M_e(\alg)$ is exactly the fixed point algebra:
$$\M_e(\alg)=\M_1(\alg)=\{b\in \M(\alg)\colon\co{\alg}(b)=b\otimes 1\}.$$
The following result is a generalization of \cite[Proposition~6.4]{Exel:Spectral}.
\begin{proposition}\label{020} Let $\alg$ be a \dualgcstar{algebra} and consider $a,b\in \M(\alg)_\ii$ and $m\in \M_s(\alg)$.
Then, for all $s,t\in G$,
\begin{enumerate}
\item[\textup{(i)}] $E_t(a)^*=\fmod(t)^{-1}E_{t^{-1}}(a^*)$,
\item[\textup{(ii)}] $ma\in \M(\alg)_\ii$ and $mE_t(a)=E_{st}(ma)$,
\item[\textup{(iii)}] $am\in \M(\alg)_\ii$ and $E_t(a)m=\fmod(s)E_{ts}(am)$,
\item[\textup{(iv)}] $E_t(a)E_s(b)=E_{ts}\bigl(E_t(a)b\bigr)=\fmod(s)E_{ts}\bigl(aE_s(b)\bigl)$.
\end{enumerate}
\end{proposition}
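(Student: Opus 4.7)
The plan is to prove (i)--(iii) by direct computation from the definition of $E_t$ using multiplicativity of $\co{\alg}$ and the KMS property of the Plancherel weight; (iv) will then be a formal consequence of (ii), (iii) and Proposition~\ref{prop:Fourier coef in espaco spectral}.

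For (i), the natural starting point is
\[
E_t(a)^* = (\id_\alg\otimes\f)\bigl(\co{\alg}(a^*)(1\otimes\la_t)\bigr),
\]
obtained by taking adjoints inside the slice map and using $\la_{t^{-1}}^* = \la_t$. The target $\fmod(t)^{-1}E_{t^{-1}}(a^*) = \fmod(t)^{-1}(\id_\alg\otimes\f)\bigl((1\otimes\la_t)\co{\alg}(a^*)\bigr)$ differs only by the position of $1\otimes\la_t$. Equation~\eqref{564} gives $\sigma_{-\ii}(\la_t) = \fmod(t)\la_t$, and the KMS identity $\f(xy)=\f(y\sigma_{-\ii}(x))$ for $x$ analytic yields, after slicing,
\[
(\id_\alg\otimes\f)\bigl(y(1\otimes\la_t)\bigr) = \fmod(t)^{-1}(\id_\alg\otimes\f)\bigl((1\otimes\la_t)y\bigr)
\]
for integrable $y\in\M(\alg\otimes\csg)$. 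Applying this with $y=\co{\alg}(a^*)$ gives (i). Integrability of $\co{\alg}(a^*)(1\otimes\la_t)$, needed to justify the manipulation, follows from the analyticity of $1\otimes\la_t$ together with~\cite[Proposition~3.28]{Kustermans-Vaes:Weight}.

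Parts (ii) and (iii) both rest on $\co{\alg}(ma)=(m\otimes\la_s)\co{\alg}(a)$ and $\co{\alg}(am)=\co{\alg}(a)(m\otimes\la_s)$, which follow from multiplicativity of $\co{\alg}$ and the defining identity $\co{\alg}(m)=m\otimes\la_s$ for $m\in\M_s(\alg)$. Integrability of $ma$ and $am$ then follows from the stability of $\M(\alg\otimes\csg)_\ii$ under multiplication by $m\otimes\la_s$ on either side: $m\otimes 1$ commutes with the slice on the $\alg$-side, while $1\otimes\la_s$ is analytic and hence preserves integrability. For (ii), the group law $t^{-1}s^{-1}\cdot s = t^{-1}$ collapses the $\la$-factors after substitution into $E_{st}(ma)$, with no modular correction, and pulling $m$ out of the slice delivers $mE_t(a)=E_{st}(ma)$. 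For (iii), the analogous substitution leaves $\la_s$ on the \emph{right} of the weight, and a second application of the sliced KMS identity from (i) transports it to the left, producing the $\fmod(s)$ factor.

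Part (iv) is then immediate: Proposition~\ref{prop:Fourier coef in espaco spectral} yields $E_t(a)\in\M_t(\alg)$ and $E_s(b)\in\M_s(\alg)$, so applying (ii) with $m\defeq E_t(a)$ acting on $b$ gives $E_t(a)E_s(b)=E_{ts}\bigl(E_t(a)b\bigr)$, and applying (iii) with $m\defeq E_s(b)$ acting on $a$ gives $E_t(a)E_s(b)=\fmod(s)E_{ts}\bigl(aE_s(b)\bigr)$. The main obstacle throughout is making the KMS manipulation rigorous at the level of the operator-valued slice map $\id_\alg\otimes\f$ rather than the scalar weight $\f$; once that is granted, the rest is careful bookkeeping with the group law and the modular function.
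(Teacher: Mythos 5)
Your proposal is correct and follows essentially the same route as the paper: the sliced KMS identity $(\id_\alg\otimes\f)\bigl(y(1\otimes\la_t)\bigr)=\fmod(t)^{-1}(\id_\alg\otimes\f)\bigl((1\otimes\la_t)y\bigr)$ coming from $\sigma_z(\la_t)=\fmod(t)^{\ii z}\la_t$ for (i) and (iii), the algebraic identities $\co{\alg}(ma)=(m\otimes\la_s)\co{\alg}(a)$ and $\co{\alg}(am)=\co{\alg}(a)(m\otimes\la_s)$ plus the bimodule property of the slice map for (ii) and (iii), and Proposition~\ref{prop:Fourier coef in espaco spectral} to deduce (iv). The only cosmetic difference is that the paper establishes integrability of $ma$ and $am$ by first showing stability of $\M(\alg)_\si$ under right multiplication by $m$ and then using $\M(\alg)_\ii=\spn\M(\alg)_\si^*\M(\alg)_\si$, which is an equivalent bookkeeping of the same facts you invoke.
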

\begin{proof} \begin{enumerate}
\item[\textup{(i)}] Recall that $\la_t$ is an analytic element and
$\sigma_z(\la_t)=\fmod(t)^{\ii z}\la_t$ for all $z\in \C$ (see Equation~\eqref{564}).
This implies that $\f(x\la_t)=\f(\sigma_\ii(\la_t)x)=\fmod(t)^{-1}\f(\la_tx)$ whenever $x\in\MM{\csg}_\ii$;
see Proposition~1.12 in \cite{Kustermans-Vaes:Weight}.
A generalization of this fact also holds for the slice map $\id_\alg\otimes\f$; see Proposition 3.28 in \cite{Kustermans-Vaes:Weight}. Thus
\begin{align*}
E_t(a)^*&=(\id_\alg\otimes\f)\bigl((1_\alg\otimes\la_t^{-1})\co{\alg}(a)\bigr)^*
		\\&=(\id_\alg\otimes\f)\bigl(\co{\alg}(a^*)(1_\alg\otimes\la_t)\bigr)
		\\&=(\id_\alg\otimes\f)\bigl((1_\alg\otimes\sigma_i(\la_t))\co{\alg}(a^*)\bigr)
		\\&=\fmod(t)^{-1}E_{t^{-1}}(a^*).
\end{align*}
\item[\textup{(ii)}]

If $b\in \M(\alg)_\si$, then $\co{\alg}(bm)=\co{\alg}(b)(m\otimes\la_t)\in \M(\alg\otimes \csg)_\si$ because $\la_t$
is an analytic element. In other words, $bm\in \M(\alg)_\si$.
Using that $\M(\alg)_\ii$ is linearly spanned by $\M(\alg)_\si^* \M(\alg)_\si$, this implies that $ma\in \M(\alg)_\ii$.
Moreover, since $\co{\alg}(m)=m\otimes\la_s$, we conclude that
\begin{align*}
mE_t(a)&=m(\id_\alg\otimes\f)\bigl((1_\alg\otimes\la_{t^{-1}})\co{\alg}(a)\bigr)
		\\&=(\id_\alg\otimes\f)\bigl((m\otimes\la_{t^{-1}})\co{\alg}(a)\bigr)
		 \\&=(\id_\alg\otimes\f)\bigl((1_\alg\otimes\la_{t^{-1}}\la_{s^{-1}})\co{\alg}(ma)\bigr)
		\\&=E_{st}(ma).
\end{align*}
\item[\textup{(iii)}] As in (ii) one can prove that $am\in \M(\alg)_\ii$.
Using again the analyticity of $\la_s$ and the relations $\sigma_z(\la_s)=\fmod(s)^{\ii z}\la_s$ and $\co{\alg}(m)=m\otimes\la_s$, we get
\begin{align*}
E_t(a)m&=(\id_\alg\otimes\f)\bigl((1_\alg\otimes\la_{t^{-1}})\co{\alg}(a)\bigr)m
		\\&=(\id_\alg\otimes\f)\bigl((1_\alg\otimes\la_{t^{-1}})\co{\alg}(a)(m\otimes 1)\bigr)
		 \\&=(\id_\alg\otimes\f)\bigl((1_\alg\otimes\la_{t^{-1}})\co{\alg}(am)(1_\alg\otimes\la_{s^{-1}})\bigr)
		 \\&=(\id_\alg\otimes\f)\bigl((1_\alg\otimes\sigma_i(\la_{s^{-1}})\la_{t^{-1}})\co{\alg}(am)\bigr)
		 \\&=\fmod(s)(\id_\alg\otimes\f)\bigl((1_\alg\otimes\la_{s^{-1}}\la_{t^{-1}})\co{\alg}(am)\bigr)
		\\&=\fmod(s)E_{ts}(am).
\end{align*}
\item[\textup{(iv)}] This follows from (ii) and (iii) together with Proposition~\ref{prop:Fourier coef in espaco spectral}.
\end{enumerate}
\vskip-12pt
\end{proof}

The following result gives some further properties of the Fourier transform.
In particular, we get continuity properties that generalize \cite[Proposition~6.3]{Exel:Unconditional}
and \cite[Proposition~6.3]{Exel:Spectral}.

\begin{proposition} Let $\alg$ be a \dualgcstar{algebra}. If $a\in \M(\alg)_\ii^+$ is a positive integrable element,
then $t\mapsto \tilde E_t(a)\defeq \fmod(t)^{\frac{1}{2}}\cdot E_t(a)$ is a positive-definite function.
In general, $t\mapsto \tilde E_t(a)$ is linear combination of positive-definite functions.
In particular, it is bounded and strictly continuous. Moreover, if $G$ has equivalent uniform structures, 
then the Fourier transform $t\mapsto E_t(a)$ is bounded and strictly-uniformly continuous on $G$, that is,
for each $b\in \alg$, all the expressions $\|E_{ts}(a)b-E_s(a)b\|$, $\|bE_{ts}(a)-bE_s(a)\|$, $\|E_{st}(a)b-E_s(a)b\|$ and
$\|bE_{st}(a)b-bE_s(a)\|$ converge to zero uniformly in $s$ as $t$ converges to $e$ \textup(the identity element of $G$\textup).
\end{proposition}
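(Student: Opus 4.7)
The plan is to exhibit $\tilde E_t(a)=\fmod(t)^{1/2}E_t(a)$ in the form $T^*w_tT$ for a strongly continuous unitary representation $w$ of $G$ on some Hilbert $\alg$\nb-module, so that the Naimark-type characterisation of positive-definite functions cited before Proposition~\ref{prop:positive-definite implica integravel} applies. The boundedness, strict continuity and left strict-uniform continuity of $\tilde E(a)$ then follow at once from the general properties of positive-definite functions recalled in the same passage of the paper.

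By linearity of $a\mapsto E_t(a)$ and the identity $\M(\alg)_\ii=\spn\,\M(\alg)_\ii^+$, it suffices to produce the positive-definite form when $a\in\M(\alg)_\ii^+$; the general case then follows automatically as a linear combination of positive-definite functions. For such an $a$, put $b\defeq a^{1/2}$, which lies in $\M(\alg)_\si$ since $b^*b=a\in\M(\alg)_\ii^+$, and set $T\defeq\co{\alg}(b)\in\M(\alg\otimes\csg)_\si$, so that $\co{\alg}(a)=T^*T$. The slice weight $\id_\alg\otimes\f$ comes with a GNS-type construction $\Lambda_\alg\colon\M(\alg\otimes\csg)_\si\to\hilm$ valued in a Hilbert $\alg$\nb-module $\hilm$, satisfying $(\id_\alg\otimes\f)(S^*T)=\<\Lambda_\alg(S),\Lambda_\alg(T)\>$. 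Since $\la_t$ is analytic for the modular group of $\f$ with $\sigma_z(\la_t)=\fmod(t)^{\ii z}\la_t$ (Equation~\eqref{564}), the KMS property of $\f$ -- together with the identity for the Plancherel weight on $\Ls(G)$, where right multiplication by $\la_t$ in the GNS implements, up to the scalar $\fmod(t)^{-1/2}$, the right regular representation $\rho_{t^{-1}}$ on $L^2(G)$ -- yields the right-action formula
\[
\Lambda_\alg\bigl(T(1\otimes\la_t)\bigr)=\fmod(t)^{-1/2}\,(1\otimes\rho_{t^{-1}})\Lambda_\alg(T).
\]

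Setting $\xi\defeq\Lambda_\alg(T)$ and using $(1\otimes\la_{t^{-1}})T^*T=\bigl(T(1\otimes\la_t)\bigr)^*T$, we compute
\[
E_t(a)=\bigl\<\Lambda_\alg(T(1\otimes\la_t)),\Lambda_\alg(T)\bigr\>=\fmod(t)^{-1/2}\bigl\<\xi,(1\otimes\rho_t)\xi\bigr\>,
\]
so that $\tilde E_t(a)=\<\xi,(1\otimes\rho_t)\xi\>$. Viewing $\xi$ as an adjointable operator $T_\xi\colon\alg\to\hilm$ by right multiplication, this becomes $\tilde E_t(a)=T_\xi^*w_tT_\xi$ with $w_t\defeq 1_\hilm\otimes\rho_t$ a strongly continuous unitary representation of $G$ on $\hilm$ -- exactly the form required by the Naimark characterisation, hence proving positive-definiteness.

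For the closing assertion under the equivalent uniform structures hypothesis on $G$: in that case $\fmod$ is continuous and locally uniformly bounded, and the left and right uniform structures on $G$ coincide, so the (strict) left-uniform continuity of the positive-definite function $\tilde E(a)$ automatically upgrades to a two-sided strict-uniform continuity. Combining this with Proposition~\ref{020}(ii) and (iii) to trade left translates for right translates, and then dividing out the factor $\fmod^{1/2}$ using its local uniform boundedness, produces the four uniform estimates in the statement for $E_t(a)$. The only real obstacle in the whole argument is the verification of the right-action formula above -- the modular-theoretic input that turns $\tilde E_t(a)$ into a matrix coefficient of a unitary representation; once this is in place, everything else is a direct unpacking of results already cited in the paper.
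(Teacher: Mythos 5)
Your proposal is correct, but it takes a different route from the paper: the paper's entire proof is a one-line reduction, observing that $t\mapsto E_t(a)$ is by definition the generalized Fourier transform of the integrable element $\co{\alg}(a)\in\MM{\alg\otimes\csg}_\ii$ and then citing Corollaries 5.2 and 5.3 of \cite{Buss:Fourier} for all assertions. You instead reconstruct the content of those corollaries from scratch, and your reconstruction is sound: the reduction to positive $a$, the factorization $\co{\alg}(a)=T^*T$ with $T=\co{\alg}(a^{1/2})$ square-integrable, and the identity $\tilde E_t(a)=\<\xi,(1\otimes\rho_t)\xi\>=T_\xi^*(1\otimes\rho_t)T_\xi$ all check out (in the scalar case your right-action formula is exactly the elementary identity $\la(\xi)\la_t=\fmod(t)^{-1/2}\la(\rho_{t^{-1}}\xi)$, and its generalization to the slice weight is the standard modular-theoretic fact $\Lambda(x\la_t)=J\sigma_{\ii/2}(\la_t)^*J\Lambda(x)$ with $J\la_{t^{-1}}J=\rho_{t^{-1}}$, of the type the paper already invokes via \cite[Proposition~3.28]{Kustermans-Vaes:Weight}). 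The one step you rightly flag as the crux is thus available in the cited weight-theory literature rather than a genuine gap. A minor simplification for your last paragraph: a locally compact group with equivalent left and right uniform structures is a SIN group and hence unimodular, so $\fmod\equiv 1$ and $\tilde E_t(a)=E_t(a)$; there is no factor of $\fmod^{1/2}$ to divide out, and the four uniform estimates follow directly from the two-sided uniform continuity of the positive-definite function together with its boundedness. What your approach buys is a self-contained proof inside this paper; what the paper's approach buys is brevity at the cost of opacity, since the reader must consult \cite{Buss:Fourier} to see the argument you have written out.
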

\begin{proof}
Since $t\mapsto E_t(a)$ is the Fourier transform of the integrable element $\co{\alg}(a)\in \MM{\alg\otimes\csg}_\ii$,
all the assertions follow from Corollaries 5.2 and 5.3 in \cite{Buss:Fourier}.
\end{proof}

Finally, we describe the Fourier transform in the case of a dual coaction.

\begin{theorem}\label{the:FourierTransfDualCoactions}
Let $\fell=\{\fell_t\}_{t\in G}$ be a Fell bundle and consider the \cstar{algebra} $\cstarfell$ with the dual coaction
$\co{\fell}$ of $G$ \textup(defined by Equation~\textup{\eqref{139}}\textup). Identifying $\fell_t$ as a subspace of $\MM{\cstarfell}$, we have
$$E_t(\xi)=\xi(t)\quad \mbox{for all }\xi\in \cont_c(\fell)^2\mbox{ and }t\in G.$$
\end{theorem}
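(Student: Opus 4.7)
The statement should follow almost directly from the material already assembled, with no genuinely new argument required. The two key ingredients are: (a) Equation~\eqref{eq:FormulaDualCoaction}, which reads $\co{\fell}(\xi) = \la_\alg(\xi)$ for every $\xi \in \cont_c(\fell)$, once we view $\xi$ as an element of $\cont_c(G, \M^\st(\alg))$ via the embedding $\Phi$ of Proposition~\ref{prop:embedding of Fell bundles}; and (b) Proposition~\ref{prop:positive-definite implica integravel}, which says that whenever $f \in \cont_c(G, \M^\st(\alg))$ gives $\la_\alg(f) \geq 0$, the element $\la_\alg(f) \in \MM{\alg \otimes \csg}$ is integrable for the Plancherel weight and its Fourier transform is $f$ itself, i.e.\ $\widehat{\la_\alg(f)}(t) = f(t)$ for every $t \in G$.

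First I would reduce to the special case $\xi = \eta^* * \eta$ with $\eta \in \cont_c(\fell)$. Applying the standard polarization identity
\[
a^* b = \tfrac{1}{4} \sum_{k=0}^{3} i^{-k} (a + i^k b)^* (a + i^k b)
\]
inside the $*$-algebra $\cont_c(\fell)$ (with $a = \eta^*$, $b = \zeta$) writes any product $\eta * \zeta$ as a linear combination of elements of the form $\omega^* * \omega$ with $\omega \in \cont_c(\fell)$. Hence $\cont_c(\fell)^2$ is linearly spanned by such positive elements, and since both $\xi \mapsto E_t(\xi)$ and $\xi \mapsto \xi(t)$ are linear (Proposition~\ref{020}(i)--(iii) for the former, evaluation for the latter), it suffices to prove the identity in this case.

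For $\xi = \eta^* * \eta$, we have $\la_\alg(\xi) = \la_\alg(\eta)^* \la_\alg(\eta) \geq 0$ in $\Ls(L^2(G,\alg))$, so the hypothesis of Proposition~\ref{prop:positive-definite implica integravel} is satisfied with $f = \xi$. That proposition then yields both $\la_\alg(\xi) \in \MM{\alg \otimes \csg}_\ii$ (recovering the integrability already proved in Theorem~\ref{the:DualCoactionIntegrable}) and, crucially, the pointwise identity $\widehat{\la_\alg(\xi)}(t) = \xi(t)$ for every $t \in G$. Combining with (a) and the definition of the Fourier coefficient, we read off
\[
E_t(\xi) = \widehat{\co{\fell}(\xi)}(t) = \widehat{\la_\alg(\xi)}(t) = \xi(t),
\]
which is exactly the desired formula; extending by linearity finishes the proof.

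No real obstacle is anticipated: the theorem is essentially a repackaging of Proposition~\ref{prop:positive-definite implica integravel} through the identification $\co{\fell} = \la_\alg$ on $\cont_c(\fell)$. The only point requiring care is bookkeeping: one must check that the identification of $\xi(t) \in \fell_t$ as a multiplier of $\alg = \cstarfell$ implicit in the statement of the theorem coincides with the identification used when viewing $\xi$ as an element of $\cont_c(G, \M^\st(\alg))$ before applying Proposition~\ref{prop:positive-definite implica integravel}. Both are the embedding $\Phi$ of Proposition~\ref{prop:embedding of Fell bundles}, so there is nothing to verify beyond unpacking the definitions.
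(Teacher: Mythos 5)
Your proposal is correct and follows exactly the paper's own argument: the paper's proof also reduces by polarization to $\xi=\eta^**\eta$ (as in Theorem~\ref{the:DualCoactionIntegrable}) and then combines Equation~\eqref{eq:FormulaDualCoaction} with Proposition~\ref{prop:positive-definite implica integravel} to read off $E_t(\xi)=\widehat{\la_\alg(\xi)}(t)=\xi(t)$. Your version merely spells out the details that the paper leaves implicit.
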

\begin{proof} Using polarization and the same idea as in the proof of Theorem~\ref{the:DualCoactionIntegrable},
this result follows from Proposition~\ref{prop:positive-definite implica integravel}
and Equation~\eqref{eq:FormulaDualCoaction}
\end{proof}

\begin{remark}\label{232} Let us illustrate how it is easy to show Theorem~\ref{the:FourierTransfDualCoactions} if $G$ is discrete.
Moreover, in this case the result holds for all $\xi\in L^1(\fell)$. Indeed, if $G$ is discrete,
$$\co{\fell}(\xi)=\sum\limits_{s\in G}\xi(s)\otimes\la_s$$
for all $\xi\in L^1(\fell)$ and the functional $\f_t$ is bounded
and satisfies $\f_t(\la_s)=\dtg_{t,s}$ for all $t,s\in G$, where $\dtg_{t,s}$ denotes the delta Kronecker's function.
Therefore,
$$E_t(\xi)=(\id\otimes\f_t)\left(\sum\limits_{s\in G}\xi(s)\otimes\la_s\right)=\sum\limits_{s\in G}\xi(s)\f_t(\la_s)=\xi(t).$$
\end{remark}

Theorem~\ref{the:FourierTransfDualCoactions} is a generalization of \cite[Theorem~5.5]{Exel:Unconditional}
to non-abelian groups. Theorem~5.5 in \cite{Exel:Unconditional} is proved using an appropriate Fourier inversion formula.
To explain what we mean, let us assume that $G$ is abelian.
Then the dual coaction $\co{\fell}$ corresponds to the action $\beta$ of $\dualg$ on $\cstarfell$ given by
$$\beta_x(\xi)(t)=\overline{\braket{x}{t}}\xi(t)\quad\mbox{for all } \xi\in \cont_c(\fell), \,\,t\in G,\,\,x\in \dualg.$$
It is easy to see that $\beta_x(b_t)=\overline{\braket{x}{t}}b_t$ for all
$b_t\in \fell_t\sbe \MM{\cstarfell}$. Equation~\eqref{eq:SectionAsIntegral} yields
$$\beta_x(\xi)=\int_G^\st\overline{\braket{x}{s}}\xi(s)\dd{s}=\int_G^\su \overline{\braket{x}{s}}\xi(s)\dd{s}.$$
Thus, we may think of $\beta_x(\xi)$ as a generalized Fourier transform of $\xi\in \cont_c(\fell)$.
In this way, Theorem~\ref{the:FourierTransfDualCoactions} is the Fourier inversion formula:
$$\int_{\dualg}^\su \langle x,t\rangle\left(\int_G^\su \overline{\braket{x}{s}}\xi(s)\dd{s}\right)\dd{x}
			=\xi(t)\quad\mbox{for } t\in G,\,\,\xi\in \cont_c(\fell)^2.$$

Now we prove an analogous version of Theorem~\ref{the:FourierTransfDualCoactions} for the
reduced \cstar{algebra} $\cstarfellr$ using the regular representation
$\la_\fell\colon\cstarfell\to \cstarfellr$ as an equivariant homomorphism. First, we need a preliminary result.

\begin{proposition}\label{pro:CompatibilitySpectralElementsEquivHomom} Let $(\alg,\co{\alg})$ and $(\algb,\co{\algb})$ be coactions of $G$.
Suppose that $\pi\colon\alg\to \M(\algb)$ is a nondegenerate equivariant \Star{}homomorphism.
If $a\in \M(\alg)_\ii$, then $\pi(a)\in \M(\algb)_\ii$ and
$$E_t\bigl(\pi(a)\bigr)=\pi\bigl(E_t(a)\bigr)\quad \mbox{for all }t\in G.$$
\end{proposition}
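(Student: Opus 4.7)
The plan is to unwind the definitions of both sides of the claimed equality, use the equivariance of $\pi$ to move $\pi$ past the coaction, and then commute $\pi$ through the slice map $\id\otimes\f$. Integrability of $\pi(a)$ is already supplied by Proposition~\ref{prop:PermanencePropertiesIntegrableCoactions}, so $E_t(\pi(a))$ is well-defined and by definition equals
$$E_t(\pi(a)) = (\id_\algb \otimes \f)\bigl((1_\algb \otimes \la_t^{-1})\,\co{\algb}(\pi(a))\bigr).$$
The equivariance $\co{\algb}\circ\pi = (\pi\otimes\id)\circ\co{\alg}$, together with the fact that $(\pi\otimes\id)$ is a $*$-homomorphism (so it commutes with left multiplication by multipliers of the form $1_\alg\otimes c$), gives
$$E_t(\pi(a)) = (\id_\algb \otimes \f)\Bigl((\pi\otimes\id)\bigl((1_\alg \otimes \la_t^{-1})\co{\alg}(a)\bigr)\Bigr).$$

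The heart of the argument is the compatibility formula
$$(\id_\algb\otimes\f)\circ(\pi\otimes\id) \;=\; \pi\circ(\id_\alg\otimes\f)$$
on the domain $\M(\alg\otimes\csg)_\ii$. For a bounded functional $\omega\in\csg^*$, the analogous identity $(\id_\algb\otimes\omega)\circ(\pi\otimes\id) = \pi\circ(\id_\alg\otimes\omega)$ is trivial on elementary tensors, extends to $\alg\otimes\csg$ by linearity and continuity, and extends to $\M(\alg\otimes\csg)$ by the strict continuity of the slice maps and of $\pi$ (which is nondegenerate, hence strictly continuous on multipliers). Letting $\omega$ range through the directed net $\G_\f$ of Section~\ref{sec:weights} and using the definition of $\id\otimes\f$ as a strict limit of slices, together with the strict continuity of $\pi$, promotes this identity from bounded functionals to the weight $\f$ on all of $\M(\alg\otimes\csg)_\ii$.

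Combining the two steps, and using that $(1_\alg\otimes\la_t^{-1})\co{\alg}(a)$ lies in $\M(\alg\otimes\csg)_\ii$ because $\la_t^{-1}$ is analytic for the modular automorphism group of $\f$ by Equation~\eqref{564}, we obtain
$$E_t(\pi(a)) \;=\; \pi\Bigl((\id_\alg\otimes\f)\bigl((1_\alg\otimes\la_t^{-1})\co{\alg}(a)\bigr)\Bigr) \;=\; \pi\bigl(E_t(a)\bigr),$$
as required.

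The main technical obstacle is precisely the commutation $(\id_\algb\otimes\f)\circ(\pi\otimes\id) = \pi\circ(\id_\alg\otimes\f)$ for the unbounded weight $\f$: one must pass from the obvious bounded-functional version to the weight version by a strict-limit argument, and this is where the approximation property \eqref{eq:NetApproxLowerSemiContinuousWeights} and the strict continuity of $\pi$ are essential. Everything else is formal manipulation of the definitions.
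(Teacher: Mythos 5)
Your proof is correct and follows essentially the same route as the paper: integrability of $\pi(a)$ via Proposition~\ref{prop:PermanencePropertiesIntegrableCoactions}, equivariance to move $\pi$ past the coaction, and the commutation of $\pi$ with the slice map $\id\otimes\f$. The paper simply asserts that last commutation inside a chain of equalities, whereas you justify it by passing from bounded functionals in $\G_\f$ to the weight via strict limits; that is exactly the implicit content of the paper's step.
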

\begin{proof}
By Proposition~\ref{prop:PermanencePropertiesIntegrableCoactions}, $\pi(a)\in \M(\algb)_\ii$ for all $a\in \M(\alg)_\ii$.
Moreover, the equivariance of $\pi$ yields the desired result:
\begin{align*}
E_t\bigl(\pi(a)\bigr)&=(\id_\algb\otimes\f)\Bigl((1_\algb\otimes\la_t^{-1})\co{\algb}\bigl(\pi(a)\bigr)\Bigr)
		 \\&=(\id_\algb\otimes\f)\Bigl((\pi\otimes\id)\bigl((1_\alg\otimes\la_t^{-1})\co{\alg}(a)\bigr)\Bigr)
		 \\&=\pi\Bigl((\id_\algb\otimes\f)\bigl((1_\alg\otimes\la_t^{-1})\co{\alg}(a)\bigr)\Bigr)
		\\&=\pi\bigl(E_t(a)\bigr).
\end{align*}
\vskip-14pt
\end{proof}

\begin{corollary}\label{039} Let $\fell=\{\fell_t\}_{t\in G}$ be a Fell bundle over $G$ and
consider the dual coaction $\co{\fell}^\red$ of $G$ on $\cstarfellr$.
Then
$$E_t\bigl(\la_\fell(\xi)\bigr)=\la_\fell\bigl(\xi(t)\bigr)\quad \mbox{for all }\xi\in \cont_c(\fell)^2.$$
\end{corollary}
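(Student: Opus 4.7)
The plan is to deduce this corollary as a direct composition of two results already established in the excerpt: Theorem~\ref{the:FourierTransfDualCoactions}, which computes Fourier coefficients for the dual coaction on $\cstarfell$, and Proposition~\ref{pro:CompatibilitySpectralElementsEquivHomom}, which asserts that nondegenerate equivariant \Star{}homomorphisms intertwine Fourier coefficients.

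First I would set $\alg\defeq\cstarfell$ and $\algb\defeq\cstarfellr$, equipped respectively with the dual coactions $\co{\fell}$ and $\co{\fell}^\red$. By definition of $\co{\fell}^\red$ (namely, that it is the unique coaction making the regular representation equivariant), the \Star{}homomorphism $\la_\fell\colon\alg\to\algb$ is $\dualg$-equivariant; it is also nondegenerate by Proposition~\ref{prop:embedding of Fell bundles}, so it extends in the usual way to a strictly continuous \Star{}homomorphism $\MM{\alg}\to\MM{\algb}$ compatible with the identifications $\fell_t\sbe\MM{\alg}$ (via $\Phi$) and $\fell_t\sbe\MM{\algb}$ (via $\Psi=\la_\fell\circ\Phi$).

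Next, take $\xi\in\cont_c(\fell)^2$. Theorem~\ref{the:DualCoactionIntegrable} shows that $\xi\in\alg_\ii$, so Proposition~\ref{pro:CompatibilitySpectralElementsEquivHomom} applies and yields $\la_\fell(\xi)\in\algb_\ii$ together with the identity
\[
E_t\bigl(\la_\fell(\xi)\bigr)=\la_\fell\bigl(E_t(\xi)\bigr)\quad\text{for all }t\in G.
\]
Now Theorem~\ref{the:FourierTransfDualCoactions} computes $E_t(\xi)=\xi(t)$, where the right-hand side is viewed as an element of $\MM{\cstarfell}$ via $\Phi$. Substituting gives $E_t\bigl(\la_\fell(\xi)\bigr)=\la_\fell\bigl(\xi(t)\bigr)$, which is exactly the desired formula (with $\la_\fell(\xi(t))\in\MM{\cstarfellr}$ computed via $\Psi_t$).

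There is essentially no obstacle: the entire content has been bundled into the two cited results. The only point to double-check is that the strictly continuous extension of $\la_\fell$ to multiplier algebras really is the map that sends the multiplier $\Phi(\xi(t))\in\MM{\cstarfell}$ to $\Psi(\xi(t))\in\MM{\cstarfellr}$, but this is immediate from $\Psi=\la_\fell\circ\Phi$ and the functoriality of the multiplier extension for nondegenerate \Star{}homomorphisms.
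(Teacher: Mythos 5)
Your proof is correct and follows exactly the same route as the paper: the paper's own proof simply cites Theorem~\ref{the:FourierTransfDualCoactions} and Proposition~\ref{pro:CompatibilitySpectralElementsEquivHomom}, which is precisely the composition you spell out. Your additional remarks on nondegeneracy, equivariance, and the compatibility of $\Phi$, $\Psi$, and the multiplier extension of $\la_\fell$ just make explicit what the paper leaves implicit.
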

\begin{proof} The assertion follows from Theorem~\ref{the:FourierTransfDualCoactions} and
Proposition~\ref{pro:CompatibilitySpectralElementsEquivHomom}.
\end{proof}

\subsection{The Fourier inversion theorem for integrable coactions}
Let $G$ be a locally compact group, let $\fell$ be a Fell bundle over $G$, and let $a\in \cont_c(\fell)^2$.
Theorem~\ref{the:FourierTransfDualCoactions} allows us to rewrite the formula
$$a=\int_G^\st a(t)\dd{t}=\int_G^{\su}a(t)\dd{t}$$
in the form
\begin{equation}\label{eq:SpectralDecompElement}
a=\int_G^\su E_t(a)\dd{t}.
\end{equation}
Note that the last equation above makes sense for an integrable element
$a$ of an arbitrary \dualgcstar{algebra} $\alg$ provided $t\mapsto E_t(\xi)$ is strictly-unconditionally integrable.
The goal of this section is to investigate when Equation~\ref{eq:SpectralDecompElement} holds in this general setting.
Recall that $\fourst{G}\cong C_\red^*(G)^*$ denotes the Fourier-Stieltjes algebra of $G$.

\begin{lemma}\label{lem:SpectralElemUnderA(G)Action} Let $(\alg,\co{\alg})$ be a \dualgcstar{algebra}.
If $a\in \M(\alg)_\ii$ and $\omega\in \fourst{G}$, then $\omega* a\in \M(\alg)_\ii$ and
$$E_t(\omega* a)=E_t(a)\omega(t)\mbox{ for all } t\in G.$$
\end{lemma}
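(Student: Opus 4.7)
The plan is to reduce the statement to a Fourier-analytic identity on $\csg$ by pushing everything through the coaction axiom. First, starting from $\omega*a = (\id\otimes\omega)(\co{\alg}(a))$ and the coaction identity $(\co{\alg}\otimes\id)\circ\co{\alg}=(\id\otimes\com)\circ\co{\alg}$, I would derive
\[
\co{\alg}(\omega*a) \;=\; (\id_\alg\otimes R_\omega)(\co{\alg}(a)),
\]
where $R_\omega\defeq (\id\otimes\omega)\circ\com\colon\csg\to\csg$. A direct check on generators shows $R_\omega(\la_s)=\omega(s)\la_s$, so that $R_\omega$ is the quantum analogue of pointwise multiplication by the function $s\mapsto\omega(\la_s)$ associated to $\omega\in\fourst{G}\cong\csg^*$.

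The technical heart of the proof is then the identity
\[
\f(\la_t^{-1}R_\omega(y)) \;=\; \omega(t)\,\f(\la_t^{-1}y)
\]
for $y\in\M(\csg)_\ii$, together with its multiplier-algebra analogue obtained by replacing $\f$ with the slice map $\id_\alg\otimes\f$. To establish it, I would set $z\defeq \la_t^{-1}y$, which still lies in $\M(\csg)_\ii$ because $\la_t^{-1}$ is analytic with respect to the modular group (cf.\ \eqref{564} and \cite[Proposition~3.28]{Kustermans-Vaes:Weight}), and then exploit $\com(\la_t)=\la_t\otimes\la_t$ to rewrite
\[
\la_t^{-1}R_\omega(y) \;=\; \la_t^{-1}(\id\otimes\omega)\bigl(\com(\la_t)\com(z)\bigr) \;=\; (\id\otimes\omega')(\com(z)),
\]
where $\omega'(x)\defeq \omega(\la_t x)$ satisfies $\omega'(1)=\omega(\la_t)=\omega(t)$. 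Applying $\f$ and invoking right invariance of the Plancherel weight (the quantum group $(\csg,\com)$ being unimodular), in the form $(\f\otimes\id)\circ\com=\f(\,\cdot\,)\cdot 1$, combined with the Fubini-type interchange between a weight and a bounded functional from the Kustermans-Vaes theory, gives $\f((\id\otimes\omega')(\com(z)))=\omega'(1)\f(z)=\omega(t)\f(\la_t^{-1}y)$. Running the same argument with $\id_\alg\otimes\f$ in place of $\f$ and testing integrability on positive functionals $\theta\in\alg^*_+$ as in Proposition~\ref{prop:chacterization of integrable elements} shows simultaneously that $(\id_\alg\otimes R_\omega)$ preserves $\M(\alg\otimes\csg)_\ii$ and yields the stated Fourier identity.

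Combining these, one reads off
\[
E_t(\omega*a) \;=\; (\id_\alg\otimes\f)\bigl((1\otimes\la_t^{-1})(\id_\alg\otimes R_\omega)(\co{\alg}(a))\bigr) \;=\; \omega(t)\,E_t(a),
\]
proving both that $\omega*a\in\M(\alg)_\ii$ and the claimed formula. The hard part, I expect, will be the bookkeeping of the Fubini exchange between the unbounded Plancherel weight and the (possibly non-positive) functional $\omega'$ in the multiplier-algebra setting; this is precisely where the strict-extension machinery of Kustermans-Vaes and the characterisation of integrability by positive slices become indispensable.
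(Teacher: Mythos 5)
Your argument is correct and follows essentially the same route as the paper: both rewrite $(1\otimes\la_t^{-1})\co{\alg}(\omega*a)$ via the coaction identity as a slice of $(\id_\alg\otimes\com)\bigl((1\otimes\la_t^{-1})\co{\alg}(a)\bigr)$ against the shifted functional $\omega(\la_t\,\cdot\,)$, and then conclude by right invariance of the Plancherel weight together with a Fubini-type interchange of the weight with the bounded functional. The only (cosmetic) difference is that the paper obtains $\omega*a\in\M(\alg)_\ii$ by citing Lemma~4.11 of \cite{Buss-Meyer:Square-integrable} directly, rather than re-deriving integrability from the positive-slice characterization as you propose.
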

\begin{proof} The Plancherel weight $\f$ on $\csg$ is both left and right invariant with respect to the comultiplication $\com$,
that is, the quantum group $(\csg,\com)$ is unimodular.
It follows from Lemma 4.11 in \cite{Buss-Meyer:Square-integrable} that $\omega*a\in \M(\alg)_\ii$ and
$E_e(\omega*a)=\Av(\omega*a)=\omega(e)\Av(a)=\omega(e)E_e(a)$.
The desired result for arbitrary $t\in G$ now follows from the following calculation
(where we use again that $\f$ is right invariant):
\begin{align*}
E_t(\omega* a)&=(\id_\alg\otimes \f)\bigl((1_\alg\otimes \la_t^{-1})\co{\alg}(\omega* a)\bigr)\\
&=(\id_\alg\otimes \f)\Bigl((\id_\alg\otimes \id\otimes \omega)\bigl((1_\alg\otimes
                                                \la_t^{-1}\otimes 1)(\id_\alg\otimes\com)\co{\alg}(a)\bigr)\Bigr)
\\&=(\id_\alg\otimes \f)\Bigl((\id_\alg\otimes \id\otimes \omega\la_t)(\id_\alg\otimes\com)
                                                    \bigl((1_\alg\otimes \la_t^{-1})\co{\alg}(a)\bigr)\Bigr)
\\&=(\id_\alg\otimes \omega\la_t)\Bigl((\id_\alg\otimes \f \otimes \id)\bigl((\id_\alg\otimes\com)
                                                        ((1_\alg\otimes \la_t^{-1})\co{\alg}(a))\bigr)\Bigr)
           \\&=(\id_\alg\otimes \omega\la_t)\Bigl((\id_\alg\otimes\f)\bigl((1_\alg\otimes\la_t^{-1})\co{\alg}(a)\bigr)\otimes 1\Bigr)
				\\&=E_t(a)\omega(t).
\end{align*}
\vskip-14pt
\end{proof}

\begin{theorem}[Fourier's inversion Theorem]\label{theo:FourierInversionTheoremForIntegrableCoactions}
Let $(\alg,\co{\alg})$ be a \dualgcstar{algebra}. Let $a\in \alg_\ii$ and suppose that the Fourier transform
$G\ni t\mapsto E_t(a)\in \M(\alg)$ is strictly-unconditionally integrable.
Then
$$\co{\alg}(a)=\int_G^\su E_t(a)\otimes\la_t\dd{t}.$$
If $\co{\alg}$ is injective, then $\int_G^\su E_t(a)\dd{t}=a$.
In general, we have
$$\int_G^\su E_t(\omega* a)\dd{t}=\omega* a\quad\mbox{for all }\omega\in \fourst{G}.$$
\end{theorem}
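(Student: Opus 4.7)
The plan is to reduce all three assertions to Theorem~\ref{the:FourierInversionTheorem} applied to the element $\co{\alg}(a) \in \MM{\alg\otimes\csg}_\ii$. The key bridge is the observation, already recorded after Definition~\ref{FourierCoef}, that $E_t(a) = \widehat{\co{\alg}(a)}(t)$ for all $t\in G$. Thus the first identity is formally just Fourier inversion for $\co{\alg}(a)$, provided the hypothesis of Theorem~\ref{the:FourierInversionTheorem} is met. That hypothesis requires strict-unconditional integrability of $t\mapsto E_t(a)\otimes \la_t$ in $\M(\alg\otimes\csg)$; I would derive this from the given strict-unconditional integrability of $t\mapsto E_t(a)$ in $\M(\alg)$ by comparing the strict Bochner integrals $\int_K^\st E_t(a)\otimes\la_t\dd t$ over compact $K$ with the integrals $\int_K^\st E_t(a)\dd t$ via the regular-representation formula~\eqref{eq:regular representation} (and using that $\la$ is a unitary representation of $G$). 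Once this step is justified, Theorem~\ref{the:FourierInversionTheorem} delivers
\begin{equation*}
\co{\alg}(a)=\int_G^\su E_t(a)\otimes \la_t\dd t.
\end{equation*}

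For the second assertion, I would exploit Proposition~\ref{prop:Fourier coef in espaco spectral}, which says $\co{\alg}\bigl(E_t(a)\bigr)=E_t(a)\otimes \la_t$. Since $\co{\alg}$ extends to a strictly continuous $*$-homomorphism $\M(\alg)\to\M(\alg\otimes\csg)$, it commutes with strict-unconditional integrals of bounded strictly continuous functions, giving
\begin{equation*}
\co{\alg}\!\left(\int_G^\su E_t(a)\dd t\right)=\int_G^\su \co{\alg}\bigl(E_t(a)\bigr)\dd t=\int_G^\su E_t(a)\otimes\la_t\dd t=\co{\alg}(a).
\end{equation*}
Injectivity of $\co{\alg}$ then yields $a=\int_G^\su E_t(a)\dd t$.

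For the third (general) assertion, I would slice by $\omega\in \fourst{G}\cong \csg^*$. Since $\id_\alg\otimes\omega$ is bounded and strictly continuous, it commutes with the strict-unconditional integral obtained in step one, so
\begin{equation*}
\omega*a=(\id_\alg\otimes\omega)\bigl(\co{\alg}(a)\bigr)=\int_G^\su E_t(a)\,\omega(t)\dd t,
\end{equation*}
using that $\omega(\la_t)$ is, by definition, the Fourier--Stieltjes function $\omega(t)$. By Lemma~\ref{lem:SpectralElemUnderA(G)Action}, $E_t(a)\omega(t)=E_t(\omega*a)$, so the right-hand side becomes $\int_G^\su E_t(\omega*a)\dd t$, as required. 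The main obstacle I anticipate is the bookkeeping in step one: translating the hypothesized strict-unconditional integrability of $t\mapsto E_t(a)$ in $\M(\alg)$ into strict-unconditional integrability of $t\mapsto E_t(a)\otimes\la_t$ in $\M(\alg\otimes\csg)$, and analogously checking that $\co{\alg}$ and the slice maps $\id_\alg\otimes\omega$ may be pulled inside these strict-unconditional integrals. These are routine consequences of strict continuity on bounded sets and the unitarity of $\la_t$, but they are the only substantive calculations required; the rest of the argument is purely formal manipulation using Proposition~\ref{prop:Fourier coef in espaco spectral} and Lemma~\ref{lem:SpectralElemUnderA(G)Action}.
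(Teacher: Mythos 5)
Your proposal is correct and follows essentially the same route as the paper: identify $E_t(a)=\widehat{\co{\alg}(a)}(t)$ and invoke Theorem~\ref{the:FourierInversionTheorem}, pull $\co{\alg}$ through the integral via Proposition~\ref{prop:Fourier coef in espaco spectral} to handle the injective case, and slice with $\omega$ using Lemma~\ref{lem:SpectralElemUnderA(G)Action} for the last identity. The only cosmetic difference is your justification of the integrability of $t\mapsto E_t(a)\otimes\la_t$: rather than comparing Bochner integrals against the regular-representation formula~\eqref{eq:regular representation}, the paper simply writes $E_t(a)\otimes\la_t=\co{\alg}\bigl(E_t(a)\bigr)$ and uses that the nondegenerate \Star{}homomorphism $\co{\alg}$ is strictly continuous on bounded sets, which is the cleaner way to carry out the transfer you flag as the main obstacle.
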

\begin{proof} Since the function $t\mapsto E_t(a)$ is strictly-unconditionally integrable, so is the function
$t\mapsto E_t(a)\otimes\la_t=\co{\alg}\bigl(E_t(a)\bigr)$, and
$$\co{\alg}\left(\int_G^{\su}E_t(a)\dd{t}\right)=\int_G^{\su}E_t(a)\otimes\la_t\dd{t}.$$
Proposition~\ref{the:FourierInversionTheorem} yields the first assertion:
$$\co{\alg}(a)=\int_G^\su E_t(a)\otimes\la_t\dd{t}.$$
This implies
$$\co{\alg}\left(\int_G^{\su} E_t(a)\dd{t}\right)=\int_G^\su E_t(a)\otimes \la_t\dd{t}=\co{\alg}(a).$$
Therefore, if $\co{\alg}$ is injective, then $\int_G^\su\!E_t(a)\dd{t}=a$.
Finally, if $\omega\in \fourst{G}$, then Lemma~\ref{lem:SpectralElemUnderA(G)Action} yields
\begin{align*}
\omega* a &=(\id_\alg\otimes\omega)\bigl(\co{\alg}(a)\bigr)
           \\ &=(\id_\alg\otimes\omega)\left(\int_G^\su E_t(a)\otimes\la_t\dd{t}\right)
           \\ &=\int_G^\su E_t(a)\omega(t)\dd{t}
           \\ &=\int_G^\su E_t(\omega* a)\dd{t}.
\end{align*}
\vskip-14pt
\end{proof}

\begin{remark}\label{rem:InjectivityNecessary}
The injectivity of $\co{\alg}$ in Theorem~\ref{theo:FourierInversionTheoremForIntegrableCoactions}
is really necessary. In fact, if $a\in \ker(\co{\alg})$,
then $a\in \alg_\ii$ and $E_t(a)=0$ for all $t\in G$.
Thus, if $\co{\alg}$ is not injective, and if $0\not=a\in \ker(\co{\alg})$, then
$$\int_G^{\su} E_t(a)\dd{t}=0\not=a.$$
\end{remark}

Theorem~\ref{theo:FourierInversionTheoremForIntegrableCoactions} generalizes Proposition~6.6 in \cite{Exel:Spectral} to non-abelian groups.
Assume that $G$ is abelian. Then, under the usual identification
$\M\bigl(\alg\otimes C_\red^*(G)\bigr)\cong \cont_b\bigl(\dualg,\M^{\st}(\alg)\bigr)$, the element $E_t(a)\otimes\la_t$ corresponds
to the function $x\mapsto \overline{\braket{x}{t}}E_t(a)$. Hence, Theorem~\ref{theo:FourierInversionTheoremForIntegrableCoactions} yields
$$\int_G^\su\overline{\braket{x}{t}}E_t(a)\dd{t}=\alpha_x(a),$$
where $\alpha$ is the action of $\dualg$ on $\alg$ corresponding to the coaction $\co{\alg}$.
The Fourier coefficient $E_t(a)$ in this case is given by the integral
$\int_{\dualg}^\su\braket{x}{t}\alpha_x(a)\dd{x}$. Thus, we may rewrite the above equation in the form of a
generalized Fourier inversion formula:
$$\int_G^\su\overline{\braket{x}{t}}\left(\int_{\dualg}^\su\braket{x}{t}\alpha_x(a)\dd{x}\right)\dd{t}=\alpha_x(a).$$

\section{Summary and outlook}

In the previous sections we saw that Fell bundles over locally compact groups
give rise to examples of integrable coactions -- the dual coactions.
It is therefore natural to ask whether every integrable coaction comes from a Fell bundle in this way, that is,
if every integrable coaction is isomorphic to a dual coaction. Unfortunately, this is
a delicate question which is not true in general. The question was initially proposed by Ruy Exel in \cite{Exel:Unconditional}
for the case of abelian groups. In a subsequent article \cite{Exel:Spectral}, Exel gave a partial solution to the problem (for abelian groups).
He proved that under certain additional conditions on an integrable coaction of $G$,
it is in fact possible to construct a Fell bundle over $G$ in such a way that the initial coaction is isomorphic to the dual coaction
of the constructed Fell bundle.

As already mentioned, if $G$ is abelian, coactions of $G$ correspond to actions of the dual group $\dualg$ and, therefore, the theory
of coactions is not necessary in this case. The main goal of Exel in \cite{Exel:Spectral} was to find
conditions on a given action of $\dualg$ that guarantee it is equivalent to a dual action on the \cstar{algebra} of some
Fell bundle over $G$. The \emph{spectral theory} developed by Exel gives enough conditions
on the spectral elements $E_t(a)$ of an integrable action for that goal to be achieved.
More precisely, it is required the existence of a dense subspace $\W$ consisting of integrable elements
that are \emph{relatively continuous}, meaning that
$$\|E_{ts}(a)E_{r}(b)-E_t(a)E_{sr}(b)\|\to 0,\quad\mbox{ uniformly in }t,r\in G\mbox{ as }s\to e.$$
for all $a,b\in \W$. Although technical, this condition is naturally established in the case of the dual action of $\dualg$ on $C^*(\fell)$,
where $\fell$ is a Fell bundle over $G$.
Moreover, the main result in \cite{Exel:Spectral} says that an action of $\dualg$ is isomorphic to a
dual action if and only if it is \emph{continuously integrable}, that is, integrable
with a relatively continuous dense subspace $\W$.


The spectral theory for group actions on \cstar{algebras} developed by Exel only makes sense for abelian groups.
However, our work allows us to extend that theory to non-abelian groups just replacing actions by coactions of groups.
In fact, besides of defining integrable coactions and proving that dual coactions belong to this class,
we developed a theory of spectral elements $E_t(a)$ that makes sense for any integrable element $a$
in a \dualgcstar{algebra} $A$, where $G$ is an arbitrary locally compact group.
Note that relative continuity is a concept that still makes sense for coactions and,
therefore, it is licit to speak of \emph{continuously integrable coactions}.
In fact, with ideas similar those of Ruy Exel in \cite{Exel:Spectral}, it is possible to characterize dual coactions
through continuous integrability. One of the main tools in this direction is the Fourier inversion Theorem for integrable coactions
which is also one of our main results (Theorem~\ref{theo:FourierInversionTheoremForIntegrableCoactions}).
We plan to publish the details of this construction in a future work.


\begin{bibdiv}
  \begin{biblist}
\bib{Buss:Fourier}{article}{
  author={Buss, Alcides},
  title={A generalized Fourier inversion Theorem},
  journal={Bull. Braz. Math. Soc. (N.S.)},
  volume={39},
  date={2008},
  number={4},
  pages={555--571},
  issn={1678-7544},
  review={\MRref{MR2465264}{}},
}

\bib{Buss-Meyer:Continuous}{article}{
  author={Buss, Alcides},
  author={Meyer, Ralf},
  title={Continuous spectral decompositions of Abelian group actions on \(C^*\)\nobreakdash-algebras},
  journal={J. Funct. Anal.},
  volume={253},
  date={2007},
  number={2},
  pages={482--514},
  issn={0022-1236},
  review={\MRref{2370086}{2009f:46091}},
}

\bib{Buss-Meyer:Square-integrable}{article}{
  author={Buss, Alcides},
  author={Meyer, Ralf},
  title={Square-integrable coactions of locally compact quantum groups},
  journal={Reports on Mathematical Physics},
  volume={63},
  date={2009},
  number={1},
  pages={191--224},
}


\bib{Combes:Poids}{article}{
  author={Combes, Fran\c{c}ois},
  title={Poids sur une $C^*$\nobreakdash-alg\`ebre},
  language={French},
  journal={J. Math. Pures Appl. (9)},
  volume={47},
  date={1968},
  pages={57--100},
  issn={0021-7824},
  review={\MRref{0236721}{38\,\#5016}},
}

\bib{Echterhoff-Kaliszewski-Quigg-Raeburn:Categorical}{article}{
  author={Echterhoff, Siegfried},
  author={Kaliszewski, Steven P.},
  author={Quigg, John},
  author={Raeburn, Iain},
  title={A categorical approach to imprimitivity theorems for $C^*$\nobreakdash-dynamical systems},
  journal={Mem. Amer. Math. Soc.},
  volume={180},
  date={2006},
  number={850},
  pages={viii+169},
  issn={0065-9266},
  review={\MRref{2203930}{2007m:46107}},
}

\bib{Exel:TwistedPartialActions}{article}{
    author={Exel, Ruy},
     title={Twisted partial actions: a classification of regular $C^*$\nobreakdash-algebraic bundles},
   journal={Proc. London Math. Soc. (3)},
    volume={74},
      year={1997},
    number={2},
     pages={417--443},
      issn={0024-6115},
  review={\MRref{1425329}{98d:46075}},
}

\bib{Exel:Unconditional}{article}{
  author={Exel, Ruy},
  title={Unconditional integrability for dual actions},
  journal={Bol. Soc. Brasil. Mat. (N.S.)},
  volume={30},
  number={1},
  date={1999},
  pages={99--124},
  issn={0100-3569},
  review={\MRref{1686980}{2000f:46071}},
}

\bib{Exel:Spectral}{article}{
  author={Exel, Ruy},
  title={Morita-Rieffel equivalence and spectral theory for integrable automorphism groups of $C^*$\nobreakdash-algebras},
  journal={J. Funct. Anal.},
  volume={172},
  number={2},
  date={2000},
  pages={404--465},
  issn={0022--1236},
  review={\MRref{1753180}{2001h:46104}},
}

\bib{ExelNg:Approx}{article}{
  author={Exel, Ruy},
  author={Chi-Keung Ng},
  title={Approximation property of $C^*$\nobreakdash-algebraic bundles},
  journal={Math. Proc. Cambridge Philos. Soc.},
  volume={132},
  number={3},
  date={2002},
  pages={509--522},
  issn={0305-0041},
  review={\MRref{1891686}{2002k:46189}},
}

\bib{Eymard:Fourier}{article}{
  author={Eymard, Pierre},
  title={L'alg\`ebre de {F}ourier d'un groupe localement compact},
  journal={Bull. Soc. Math. France},
  volume={92},
  number={},
  date={1964},
  pages={181--236},
  issn={0037-9484},
  review={\MRref{0228628}{37 \#4208}},
}

\bib{Doran-Fell:Representations}{book}{
  author={Fell, James M. G.},
  author={Doran, Robert S.},
  title={Representations of $^*$\nobreakdash-algebras, locally compact groups, and Banach $^*$\nobreakdash-algebraic bundles. Vol. 1},
  series={Pure and Applied Mathematics},
  volume={125},
  note={Basic representation theory of groups and algebras},
  publisher={Academic Press Inc.},
  place={Boston, MA},
  date={1988},
  pages={xviii+746},
  isbn={0-12-252721-6},
  review={\MRref{936628}{90c:46001}},
}

\bib{Doran-Fell:Representations_2}{book}{
  author={Fell, James M. G.},
  author={Doran, Robert S.},
  title={Representations of $^*$\nobreakdash-algebras, locally compact groups, and Banach $^*$\nobreakdash-algebraic bundles. Vol. 2},
  series={Pure and Applied Mathematics},
  volume={126},
  note={Banach $^*$\nobreakdash-algebraic bundles, induced representations, and the generalized Mackey analysis},
  publisher={Academic Press Inc.},
  place={Boston, MA},
  date={1988},
  pages={i--viii and 747--1486},
  isbn={0-12-252722-4},
  review={\MRref{936629}{90c:46002}},
}

\bib{Hess:Integracao}{article}{
  author={Hess, Patricia},
  title={Integra\c{c}\~{a}o de fun\c{c}\~{o}es vetoriais},
  language={Portuguese with English abstract},
  journal={Master's degree dissertation, Universidade Federal de Santa Catarina},
  volume={},
  date={2003},
  number={},
  pages={},
  issn={},
  review={},
}

\bib{Kustermans:KMS}{article}{
  author={Kustermans, Johan},
  title={KMS weights on \(C^*\)\nobreakdash-algebras},
  status={eprint},
  note={\arxiv{math/9704008}},
  date={1997},
}

\bib{Kustermans-Vaes:Weight}{article}{
  author={Kustermans, Johan},
  author={Vaes, Stefaan},
  title={Weight theory for \(C^*\)\nobreakdash-algebraic quantum groups},
  status={eprint},
  note={\arxiv{math/9901063}},
  date={1999},
}

\bib{Kustermans-Vaes:LCQG}{article}{
  author={Kustermans, Johan},
  author={Vaes, Stefaan},
  title={Locally compact quantum groups},
  language={English, with English and French summaries},
  journal={Ann. Sci. \'Ecole Norm. Sup. (4)},
  volume={33},
  date={2000},
  number={6},
  pages={837--934},
  issn={0012-9593},
  review={\MRref{1832993}{2002f:46108}},
}

\bib{Kustermans-Vaes:LCQGvN}{article}{
  author={Kustermans, Johan},
  author={Vaes, Stefaan},
  title={Locally compact quantum groups in the von Neumann algebraic setting},
  journal={Math. Scand.},
  volume={92},
  date={2003},
  number={1},
  pages={68--92},
  issn={0025-5521},
  review={\MRref{1951446}{2003k:46081}},
}

\bib{Meyer:Generalized_Fixed}{article}{
  author={Meyer, Ralf},
  title={Generalized fixed point algebras and square-integrable groups actions},
  journal={J. Funct. Anal.},
  volume={186},
  date={2001},
  number={1},
  pages={167--195},
  issn={0022-1236},
  review={\MRref{1863296}{2002j:46086}},
}

\bib{KaliszewskiQuigg:Landstad}{article}{
    author = {Kaliszewski, S. and Quigg, John},
     title = {Landstad's characterization for full crossed products},
   journal = {New York J. Math.},
    volume = {13},
      year = {2007},
     pages = {1--10 (electronic)},
      issn = {1076-9803},
  review={\MRref{2288078}{2007k:46119}},
}

\bib{Landstad:Duality}{article}{
   author = {Landstad, Magnus B.},
     title = {Duality theory for covariant systems},
   journal = {Trans. Amer. Math. Soc.},
    volume = {248},
      year = {1979},
    number = {2},
     pages = {223--267},
      issn = {0002-9947},
  review={\MRref{522262}{80j:46107}},
}

\bib{Neumark:PositiveDefinite}{article}{
    author = {Neumark, M.},
     title = {Positive definite operator functions on a commutative group},
   journal = {Bull. Acad. Sci. URSS S\'er. Math. [Izvestia Akad. Nauk SSSR]},
    volume = {7},
      year = {1943},
     pages = {237--244},
  review={\MRref{0010265}{5,272c}},
}

\bib{Paulsen:CompletelyBoundedMaps}{book}{
    author = {Paulsen, Vern},
     title = {Completely bounded maps and operator algebras},
    series = {Cambridge Studies in Advanced Mathematics},
    volume = {78},
 publisher = {Cambridge University Press},
   address = {Cambridge},
      year = {2002},
     pages = {xii+300},
      isbn = {0-521-81669-6},
  review={\MRref{1976867}{2004c:46118}},
}

\bib{Pedersen:CstarAlgebras}{book}{
    author = {Pedersen, Gert K.},
     title = {{$C\sp{\ast} $}-algebras and their automorphism groups},
    series = {London Mathematical Society Monographs},
    volume = {14},
 publisher = {Academic Press Inc. [Harcourt Brace Jovanovich Publishers]},
   address = {London},
      year = {1979},
     pages = {ix+416},
      isbn = {0-12-549450-5},
  review={\MRref{548006}{81e:46037}},
}

\bib{Pettis:Integration}{article}{
    author = {Pettis, B. J.},
     title = {On integration in vector spaces},
   journal = {Trans. Amer. Math. Soc.},
    volume = {44},
      year = {1938},
    number = {2},
     pages = {277--304},
      issn = {0002-9947},
  review={\MRref{1501970}{}},
}

\bib{Rieffel:Proper}{incollection}{
   author = {Rieffel, Marc A.},
     title = {Proper actions of groups on {$C\sp *$}-algebras},
 booktitle = {Mappings of operator algebras ({P}hiladelphia, {PA}, 1988)},
    series = {Progr. Math.},
    volume = {84},
     pages = {141--182},
 publisher = {Birkh\"auser Boston},
   address = {Boston, MA},
      year = {1990},
  review={\MRref{1103376}{92i:46079}},
}

\bib{Rieffel:Integrable_proper}{article}{
  author={Rieffel, Marc A.},
  title={Integrable and proper actions on $C^*$\nobreakdash-algebras, and square-integrable representations of groups},
  journal={Expo. Math.},
  volume={22},
  date={2004},
  number={1},
  pages={1--53},
  issn={0723-0869},
  review={\MRref{2166968}{2006g:46108}},
}

\bib{Takesaki:Theory_2}{book}{
    author = {Takesaki, M.},
     title = {Theory of operator algebras. {II}},
    series = {Encyclopaedia of Mathematical Sciences},
    volume = {125},
      note = {Operator Algebras and Non-commutative Geometry, 6},
 publisher = {Springer-Verlag},
   address = {Berlin},
      year = {2003},
     pages = {xxii+518},
      isbn = {3-540-42914-X},
  review={\MRref{1943006}{2004g:46079}},
}
  \end{biblist}
\end{bibdiv}

\end{document}